%
%
%
%
\documentclass[12pt]{amsart}

\usepackage{amssymb, amsmath,amsthm}
\usepackage[backref]{hyperref}
\usepackage[alphabetic,backrefs,lite]{amsrefs}
\usepackage{enumerate}
\usepackage{euscript}
\usepackage{color}

\topmargin-1cm
 \textheight8.8in
 \setlength{\textwidth}{6.5in}
  \setlength{\oddsidemargin}{0in}
  \setlength{\evensidemargin}{0in}

\renewcommand{\a}{\alpha}
\renewcommand{\b}{\beta}
\newcommand{\m}{\mathfrak{m}}
\newcommand{\n}{\mathfrak{n}}
\newcommand{\oo}{\otimes}
\newcommand{\om}{\omega}
\newcommand{\p}{\mathfrak{p}}

\newcommand{\bb}[1]{\mathbb{#1}}
\newcommand{\defi}[1]{{\upshape\sffamily #1}}
\newcommand{\ds}{\displaystyle}

\newcommand{\mc}[1]{\mathcal{#1}}
\newcommand{\ol}[1]{\overline{#1}}
\newcommand{\ul}[1]{\underline{#1}}

\newcommand{\Max}{\textrm{Max}}
\newcommand{\Spec}{\textrm{Spec}}
\newcommand{\Tor}{\textrm{Tor}}

\newcommand{\adj}{\textrm{adj}}
\newcommand{\ann}{\textrm{ann}}
\newcommand{\codim}{\textrm{codim}}
\newcommand{\gr}{\textrm{gr}}
\newcommand{\pd}{\textrm{pd}}
\newcommand{\ord}{\textrm{ord}}
\newcommand{\reg}{\textrm{reg}}

\def\lra{\longrightarrow}

\newtheorem{theorem}{Theorem}

\newtheorem{conjecture}[theorem]{Conjecture}

\newtheorem{question}[theorem]{Question}

\theoremstyle{definition}
\newtheorem{definition}[theorem]{Definition}
\newtheorem*{definition*}{Definition}
\newtheorem{example}[theorem]{Example}

\numberwithin{equation}{section}



\author{Craig Huneke}
\address{University of Virginia,  Department of Mathematics, 141 Cabell Drive, Kerchof Hall, Charlottesville, VA 22904}
\email{huneke@virginia.edu}

\author{Claudiu Raicu}
\address{Department of Mathematics, Princeton University, Princeton, NJ 08544\newline
\indent Institute of Mathematics ``Simion Stoilow'' of the Romanian Academy}
\email{craicu@math.princeton.edu}

\subjclass[2010]{Primary: 13A, 13B, 13C, 13D, 13H}
\keywords{uniform bounds, symbolic powers, projective dimension, free resolution, regularity}
\thanks{The first author was supported by NSF grant number 1259142.}
\date{July 31, 2013}

\begin{document}

\thispagestyle{plain}

\title{Introduction to Uniformity in Commutative Algebra}

\maketitle

\begin{abstract}
 These notes are based on three lectures given by the first author as part of an introductory workshop at MSRI for the program in Commutative Algebra, 2012-13. The notes follow the talks, but there are extra comments and explanations, as well as a new section on the uniform Artin-Rees theorem. The notes deal with the theme of uniform bounds, both absolute and effective, as well as uniform annihilation of cohomology.
\end{abstract}

\section{Introduction}\label{sec:introduction}

The goal of these notes is to introduce the concept of \defi{uniformity} in Commutative Algebra. Rather than giving a precise definition of what uniformity means, we will try to convey the idea of uniformity through a series of examples. As we'll soon see, uniformity is ubiquitous in Commutative Algebra: it may refer to absolute or effective bounds for certain natural invariants (ideal generators, regularity, projective dimension), or uniform annihilation of (co)homology functors (Tor, Ext, local cohomology). We will try to convince the reader that the simple exercise of thinking from a uniform perspective  almost always leads to significant, interesting,  and fundamental questions and theories.  This theme
has also been discussed in an article by 
 Schoutens \cite{sch}, who shows how uniform bounds can be useful in numerous contexts that we do not consider in
this paper.  

The first section of this paper, based on the first lecture in the workshop, is more elementary and introduces many basic concepts. The next three sections target specific topics and
require more background in general, though an effort has been made to minimize the knowledge needed to read them. Each section has some exercises which the reader might
solve to gain further understanding. The first section in particular has a great many exercises.

We begin to illustrate the theme of uniformity with what is probably the most basic theorem in commutative algebra: 

\begin{theorem}[Hilbert's Basis Theorem {\cite[Thm.~1.2]{eisCA}}]\label{thm:HilbertBasis} If $k$ is a field and $n$ is a non-negative integer, then any ideal in the polynomial ring $S=k[x_1,\cdots,x_n]$ is finitely generated.
\end{theorem}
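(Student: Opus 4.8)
The plan is to deduce the theorem from the one-variable case by induction. Recall that a commutative ring $R$ is called \defi{Noetherian} if it satisfies the ascending chain condition on ideals, equivalently if every ideal of $R$ is finitely generated. A field $k$ is trivially Noetherian, since its only ideals are $(0)$ and $k$. So it suffices to prove the single step: if $R$ is Noetherian, then the polynomial ring $R[x]$ is Noetherian. Applying this repeatedly along the tower $k \subseteq k[x_1] \subseteq k[x_1,x_2] \subseteq \cdots \subseteq k[x_1,\ldots,x_n] = S$ then shows that $S$ is Noetherian, which is exactly the assertion that every ideal of $S$ is finitely generated.

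To carry out the one-step statement, fix an ideal $I \subseteq R[x]$. For each integer $d \geq 0$ let $L_d$ be the subset of $R$ consisting of $0$ together with the leading coefficients of those polynomials in $I$ of degree exactly $d$. I would first check that each $L_d$ is an ideal of $R$ and that $L_0 \subseteq L_1 \subseteq L_2 \subseteq \cdots$, the inclusions coming from multiplication by $x$. Since $R$ is Noetherian this chain stabilizes, say $L_d = L_N$ for all $d \geq N$, and each $L_d$ with $d \leq N$ is finitely generated.

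The next step is a finite selection: for each $d$ with $0 \leq d \leq N$, choose finitely many polynomials in $I$, of degree $d$, whose leading coefficients generate $L_d$. I claim the resulting finite collection $\mathcal{F}$ generates $I$. To see this, take any $f \in I$ and induct on $\deg f$: if $\deg f = d \leq N$, express the leading coefficient of $f$ as an $R$-linear combination of the leading coefficients of the chosen degree-$d$ polynomials, subtract the corresponding combination of those polynomials from $f$, and note that the difference lies in $I$ and has strictly smaller degree; if $\deg f = d > N$, do the same, but first multiply the chosen degree-$N$ polynomials by $x^{d-N}$ to match degrees. Iterating drives the degree down until $f$ has been written as an $R[x]$-combination of elements of $\mathcal{F}$.

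The only place demanding care is this last degree-reduction bookkeeping, so I expect that to be the main (though modest) obstacle; everything else is formal. It is worth noting an alternative, more ``effective'' route that fits the theme of these notes: Dickson's lemma shows directly that every monomial ideal of $S$ is finitely generated, and a Gröbner-basis argument — passing to initial ideals with respect to a monomial order — then transfers finite generation to an arbitrary ideal, while also yielding explicit generating sets.
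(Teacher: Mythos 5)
Your proof is correct and is the classical leading-coefficient argument for the Hilbert Basis Theorem: reduce by induction to showing $R$ Noetherian implies $R[x]$ Noetherian, form the ascending chain of ideals $L_d$ of leading coefficients, use stabilization to extract a finite generating set, and verify generation by degree induction (splitting the cases $d\le N$ and $d>N$). The paper does not supply its own proof — it simply cites \cite[Thm.~1.2]{eisCA} — so there is nothing in the source to compare against, but your argument is precisely the standard one found there, and your closing remark about the alternative Dickson's lemma/Gröbner-basis route is also accurate and in keeping with the effective-bounds theme of these notes.
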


As it stands, this theorem does give a type of uniformity, namely the property of being finitely generated. But this is quite general, and not absolute or effective.
One first might try for absolute bounds:

\begin{question}\label{que:numgens} Is there an absolute upper bound for the (minimal) number of generators of ideals in $S$?
\end{question}

This has a positive answer in the case $n=1$: $S=k[x_1]$ is a \defi{principal ideal domain}, so any ideal in $S$ can be generated by one element. However, for $n\geq 2$, it is easy to see that such an absolute bound cannot exist: the ideal $I=(x_1,x_2)^N$ can't be generated by fewer than $(N+1)$ elements. One can then try to refine Question~\ref{que:numgens}, which leads us to several interesting variations:

\begin{question}\label{que:numgensrefined} Is there an absolute upper bound for the number of generators of an ideal $I$ in $S$, if
\begin{enumerate}[(a)]
 \item we assume that $I$ is prime?

 \item $I$ is homogeneous and we impose bounds on the degrees of the generators of $I$?
 
 \item we are only interested in the generation of $I$ up to radical? (Recall that the \defi{radical $\sqrt{I}$} is the set $\{f\in S:f^r\in I\textrm{ for some }r\}$.)
\end{enumerate}
\end{question}

For part (a) we have a positive answer in the case $n=2$: any prime ideal $I\subset k[x_1,x_2]$ is either maximal, or has height one, or is zero, so it can be generated by at most two elements (because the ring is a UFD, height one primes are principal; for maximal ideals, see Exercise~\ref{exer:maxngens}). However, for $n\geq 3$ the assumption that $I$ is prime is not sufficient to guarantee an absolute bound for its number of generators: in fact in \cites{moh1,moh2} a sequence of prime ideals $\p_n\subset k[x,y,z]$ is constructed, where the minimal number of generators of $\p_n$ is $n+1$. For part (b), if we assume that $I$ is generated in degree at most $d$, then the absolute bound for the number of generators of $I$ is attained when $I=\m^d$ is a power of the maximal ideal $\m$, and is given by the binomial coefficient $\ds{n+d-1\choose n-1}$ (see Exercise~\ref{exer:maxgenshomogeneous}). Part (c) is already quite subtle: every ideal $I\subset S$ is generated up to radical by $n$ elements \cites{eis-evans,
storch}.

Another variation of Question~\ref{que:numgens} is to ask whether one can find effective lower bounds for the number of generators of an ideal $I$ of $S$. One such bound is obtained as a consequence of Krull's Hauptidealsatz (Principal Ideal Theorem), in terms of the \defi{codimension} of the ideal $I$:
\[\codim(I)=\dim(S)-\dim(S/I)=n-\dim(S/I).\]

\begin{theorem}[{\cite[Ch.~12]{matsumuraCA},\cite[Ch.~10]{eisCA}}]\label{thm:Krull}
 The number of generators of $I$ is at least as large as the codimension of $I$ (and this inequality is sharp).
\end{theorem}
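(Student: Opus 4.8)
The plan is to derive this from Krull's height theorem (the generalized principal ideal theorem): in an arbitrary Noetherian ring $R$, if a prime $\mathfrak{q}$ is minimal over an ideal generated by $r$ elements, then $\operatorname{ht}(\mathfrak{q}) \le r$. Granting this, let $r$ be the minimal number of generators of $I$ and let $\p$ be a minimal prime of $I$, so $\operatorname{ht}(\p) \le r$; since in the polynomial ring the dimension formula $\operatorname{ht}(\p) + \dim(S/\p) = n$ holds for every prime, choosing $\p$ with $\dim(S/\p) = \dim(S/I)$ yields $\codim(I) = n - \dim(S/I) = \operatorname{ht}(\p) \le r$, which is the assertion. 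Sharpness is immediate from the example $I = (x_1, \dots, x_c)$: here $S/I \cong k[x_{c+1}, \dots, x_n]$, so $\codim(I) = c$ equals the number of generators.

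So the work is in proving Krull's height theorem, which I would do in two stages. The first stage is the Hauptidealsatz, the case $r = 1$: if $\p$ is minimal over a principal ideal $(a)$ in a Noetherian ring $R$, then $\operatorname{ht}(\p) \le 1$. After localizing we may assume $(R,\p)$ is local with $R/(a)$ Artinian, and for a prime $\mathfrak{q} \subsetneq \p$ we must show $\operatorname{ht}(\mathfrak{q}) = 0$. The key device is the chain of symbolic powers $\mathfrak{q}^{(n)} = \mathfrak{q}^n R_{\mathfrak{q}} \cap R$: because $R/(a)$ is Artinian, the descending chain of ideals $\bigl(\mathfrak{q}^{(n)} + (a)\bigr)/(a)$ stabilizes, and using $a \notin \mathfrak{q}$ one checks that $\mathfrak{q}^{(n)} = \mathfrak{q}^{(n+1)} + a\,\mathfrak{q}^{(n)}$ for $n \gg 0$. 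Since $a$ lies in the maximal ideal, Nakayama's lemma gives $\mathfrak{q}^{(n)} = \mathfrak{q}^{(n+1)}$, hence $\mathfrak{q}^n R_{\mathfrak{q}} = \mathfrak{q}^{n+1} R_{\mathfrak{q}}$, and a second application of Nakayama, now in $R_{\mathfrak{q}}$, forces $\mathfrak{q}^n R_{\mathfrak{q}} = 0$; thus $R_{\mathfrak{q}}$ is Artinian and $\operatorname{ht}(\mathfrak{q}) = 0$.

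The second stage is an induction on $r$, with the Hauptidealsatz as the base case. Let $\p$ be minimal over $(a_1, \dots, a_r)$, localize to assume $(R,\p)$ local, and let $\mathfrak{q} \subsetneq \p$ be any prime with no prime lying strictly between $\mathfrak{q}$ and $\p$; the goal is to show $\operatorname{ht}(\mathfrak{q}) \le r-1$. Minimality of $\p$ forces some $a_i$, say $a_r$, to lie outside $\mathfrak{q}$, and then $\p$ is minimal over $\mathfrak{q} + (a_r)$; writing $a_i^{N} = q_i + c_i a_r$ with $q_i \in \mathfrak{q}$ for $i < r$, one checks that $\p$ is also minimal over $(q_1, \dots, q_{r-1}, a_r)$. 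Passing to $R/(q_1, \dots, q_{r-1})$ and applying the Hauptidealsatz gives $\operatorname{ht}\bigl(\p/(q_1, \dots, q_{r-1})\bigr) \le 1$, which forces $\mathfrak{q}$ to be minimal over $(q_1, \dots, q_{r-1})$; the inductive hypothesis then yields $\operatorname{ht}(\mathfrak{q}) \le r-1$. Since the second term of a chain of primes descending from $\p$ of maximal length is such a $\mathfrak{q}$, we conclude $\operatorname{ht}(\p) \le \operatorname{ht}(\mathfrak{q}) + 1 \le r$.

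I expect the main obstacle to be the Hauptidealsatz itself — the bookkeeping with symbolic powers and the two successive appeals to Nakayama's lemma, which is where essentially all the content sits. Once that is in place, the inductive step is a routine matter of choosing the right prime below $\p$ and passing to a quotient, and the reduction of the stated inequality to the height theorem via the dimension formula is purely formal.
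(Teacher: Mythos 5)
Your proposal is correct and follows exactly the route the paper indicates: deduce the bound from Krull's generalized principal ideal theorem together with the dimension formula $\operatorname{ht}(\p)+\dim(S/\p)=n$ for primes in a polynomial ring, and exhibit $(x_1,\dots,x_c)$ for sharpness. The paper simply cites Krull's theorem to Matsumura and Eisenbud, and the proof you supply for it (symbolic powers and two applications of Nakayama for the Hauptidealsatz, then induction by passing to a prime immediately below $\p$) is the standard one found there.
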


In order to discuss further uniformity statements, we need to expand the set of invariants that we associate to ideals, and more generally to modules over the polynomial ring $S$. We start with the following:

\begin{definition}[Hilbert Function/Series] Let $M=\bigoplus_{i\in\bb{Z}}M_i$ denote a finitely generated graded $S$-module, written as the sum of its homogeneous components (so that $S_i\cdot M_j\subset M_{i+j}$). The \defi{Hilbert function} $h_M:\bb{Z}\to\bb{Z}_{\geq 0}$ is defined by
\[h_M(i)=\dim_k(M_i).\]
We write $M(d)$ for the shifted module having $M(d)_i=M_{d+i}$. It follows that $h_{M(d)}(i)=h_M(i+d)$.

The \defi{Hilbert series $H_M(z)$} is the generating function associated to $h_M$:
\[H_M(z)=\sum_{i\in\bb{Z}} h_M(i)\cdot z^i.\]
\end{definition}

In the case when $M=S$ is the polynomial ring itself, we have 
\begin{equation}\label{eq:hilbFcnS}
\begin{aligned}
h_S(d)=
 \begin{cases}
\ds{n+d-1\choose n-1} & d\geq 0, \\
0 & d<0.
\end{cases}
\end{aligned}
\end{equation}
The Hilbert series of $S$ takes the simple form
\[H_S(z)=\frac{1}{(1-z)^n}.\]
It is a remarkable fact, which we explain next, that the Hilbert series of any finitely generated graded $S$-module is a rational function. An equivalent statement is contained in the following theorem of Hilbert.

\begin{theorem}[{\cite[Ch.~10]{matsumuraCA},\cite[Ch.~12]{eisCA}}]\label{thm:Hilbpoly} If $M$ is a finitely generated graded $S$-module, then there exists a polynomial $p_M(t)$ with rational coefficients, such that $p_M(i)=h_M(i)$ for sufficiently large values of $i$. 
\end{theorem}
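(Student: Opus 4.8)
The plan is to establish the equivalent assertion hinted at above — that the Hilbert series $H_M(z)$ is a rational function of a very particular shape, $H_M(z)=g(z)/(1-z)^n$ for some Laurent polynomial $g(z)\in\bb{Z}[z,z^{-1}]$ — and then to read off $p_M$ by expanding this rational function. For the extraction step, recall from \eqref{eq:hilbFcnS} that $\ds\frac{1}{(1-z)^n}=\sum_{m\geq 0}\binom{m+n-1}{n-1}z^m$. Writing $g(z)=\sum_j g_j z^j$ as a finite sum and letting $b$ be the largest index with $g_j\neq 0$, convolution shows that the coefficient of $z^i$ in $H_M(z)$ equals $\sum_j g_j\binom{i-j+n-1}{n-1}$ as soon as $i\geq b$. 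Each summand is a polynomial in $i$ of degree at most $n-1$ with rational coefficients, so $h_M(i)$ agrees for $i\gg 0$ with a polynomial $p_M(t)\in\bb{Q}[t]$ of degree at most $n-1$, which is exactly the assertion of the theorem.

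To establish the rational form of $H_M(z)$ I would induct on the number of variables $n$. When $n=0$ we have $S=k$, so a finitely generated graded module $M$ is a finite-dimensional graded vector space and $H_M(z)$ is itself a Laurent polynomial. For $n\geq 1$, multiplication by $x_n$ on $M$ fits into an exact sequence of graded $S$-modules
\[ 0\lra K\lra M(-1)\overset{x_n}{\lra}M\lra C\lra 0,\qquad K=(0:_M x_n)(-1),\quad C=M/x_n M. \]
Since $x_n K=x_n C=0$, both $K$ and $C$ are modules over $S/(x_n)\cong k[x_1,\dots,x_{n-1}]$; moreover $C$ is a quotient of $M$ and $K$ is a submodule of $M(-1)$, so both are finitely generated — the finite generation of $K$ being precisely an instance of the Hilbert Basis Theorem (Theorem~\ref{thm:HilbertBasis}). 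Additivity of the Hilbert series along exact sequences, combined with $H_{M(-1)}(z)=z\,H_M(z)$, then yields $(1-z)\,H_M(z)=H_C(z)-H_K(z)$. By the inductive hypothesis $H_C(z)$ and $H_K(z)$ are each of the form $(\text{Laurent polynomial})/(1-z)^{n-1}$, and dividing by $1-z$ gives $H_M(z)=g(z)/(1-z)^n$, completing the induction.

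The main point requiring care is the inductive step. One must recognize that the kernel and cokernel of multiplication by $x_n$ are finitely generated graded modules over a polynomial ring in one fewer variable — this is the step that genuinely uses Noetherianity, via Theorem~\ref{thm:HilbertBasis} — and one must check that the Hilbert series is additive along the displayed four-term sequence, which follows from the short-exact-sequence (i.e.\ rank–nullity) case after splicing in the image of $x_n$ as $0\to K\to M(-1)\to \text{im}\to 0$ and $0\to\text{im}\to M\to C\to 0$. The power-series bookkeeping in the first paragraph is routine, but it is worth carrying out explicitly: it is exactly there that one sees why $p_M(t)$ can have genuinely rational coefficients while still taking integer values at integers, and why $\deg p_M\leq n-1$.
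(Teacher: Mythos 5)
Your proof is correct, but it takes a genuinely different route from the paper. The paper's strategy is to first establish the result for free modules (via the explicit formula for $h_S$) and then reduce the general case to this one by invoking Hilbert's Syzygy Theorem~\ref{thm:Hilbsyzygy}: a finite free resolution $F_\bullet\to M$ gives $h_M(i)=\sum_l(-1)^l h_{F_l}(i)$, an alternating sum of eventually-polynomial functions. Your argument instead performs an induction on the number of variables, using the four-term exact sequence coming from multiplication by $x_n$ to pass from $M$ over $k[x_1,\dots,x_n]$ to $K=(0:_M x_n)(-1)$ and $C=M/x_nM$ over $k[x_1,\dots,x_{n-1}]$, which produces the rational form $H_M(z)=g(z)/(1-z)^n$ and then extracts $p_M$ by expanding the binomial series. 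The two approaches have complementary virtues: yours is more self-contained, relying only on Hilbert's Basis Theorem~\ref{thm:HilbertBasis} (to see that $K$ is finitely generated) and elementary bookkeeping, and is essentially the classical argument of Hilbert and of Atiyah--Macdonald; the paper's route presupposes the much deeper Syzygy Theorem but, in exchange, delivers at once the explicit Betti-number formula for $h_M$ and $H_M$ that the exposition wants to highlight, and fits the paper's broader theme of encoding invariants in the Betti table. Your handling of the two potentially delicate points — identifying the kernel as $(0:_M x_n)(-1)$ rather than $(0:_M x_n)$, and obtaining additivity along the four-term sequence by splicing two short exact sequences — is exactly right, and the degree bound $\deg p_M\leq n-1$ falls out correctly.
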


The polynomial $p_M$ is called the \defi{Hilbert polynomial} of $M$. Since the theorem is true for $M=S$ (as shown by (\ref{eq:hilbFcnS})), it holds for free modules as well. To prove it in general, it is then enough to show that any $M$ can be approximated by free modules in such a way that its Hilbert function is controlled by the Hilbert functions of the corresponding free modules.\begin{footnote}{Although we are concentrating on the graded case, the Hilbert function of a local ring
can be defined easily by passing to the associated graded ring. A remarkable   uniform result about Hilbert functions was proved by Srinvias and Trivedi \cite{ST}:
if the local ring is Cohen-Macaulay,  and we fix its dimension and multiplicity, then there are only finitely many possible Hilbert functions.}\end{footnote} Such approximations are realized via exact sequences. When working with graded modules, we will assume that every homomorphism $f:M\to N$ has degree $0$, i.e. $f(M_i)\subset N_i$ for all $i\in\bb{Z}$. It follows that any short exact sequence of graded modules
\[0\lra M\lra N\lra K\lra 0\]
restricts in degree $i$ to an exact sequence
\[0\lra M_i\lra N_i\lra K_i\lra 0,\]
yielding $h_N(i)=h_M(i)+h_K(i)$, and therefore
\[H_N(z)=H_M(z)+H_K(z).\]
Now if $f\in S_d$ is a form of degree $d$, then $K=S/(f)$ can be approximated by free modules via the exact sequence
\[0\lra S(-d)\overset{f}{\lra} S\lra K\lra 0.\]
It follows that 
\[h_{K}(i)=h_S(i)-h_S(i-d)={i+n-1\choose n-1} - {i-d+n-1\choose n-1},\]
which is a polynomial for $i\geq d$. For an arbitrary finitely generated graded module $M$, a similar approximation result holds, having Theorem~\ref{thm:Hilbpoly} as a direct consequence:

\begin{theorem}[Hilbert's Syzygy Theorem {\cite[Ch.~18]{matsumuraCA},\cite[Thm.~1.13]{eisCA}}]\label{thm:Hilbsyzygy}
 If $M$ is a finitely generated graded $S$-module, then there exists a finite minimal graded free resolution
\begin{equation}\label{eq:minresM}
0\lra F_n\lra F_{n-1}\lra\cdots\lra F_1\lra F_0\lra M\lra 0.
\end{equation}
\end{theorem}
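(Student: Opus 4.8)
The plan is to proceed in two stages: first build a graded free resolution of $M$ that is \emph{minimal} (of a priori unknown, possibly infinite, length), and then show that minimality forces the resolution to stop after at most $n$ steps.

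\emph{Stage 1: existence of a minimal resolution.} Since $M$ is a finitely generated graded module, the graded Nakayama lemma --- if $M$ is finitely generated and graded with $\m M=M$, then $M=0$, where $\m=(x_1,\dots,x_n)$ --- lets me pick finitely many homogeneous elements of $M$ whose residues form a $k$-basis of $M/\m M$; these generate $M$ and give a graded surjection $\varepsilon\colon F_0\lra M$ from a finitely generated graded free module with the property that $F_0/\m F_0\lra M/\m M$ is an isomorphism. Its kernel $\Omega_1=\ker\varepsilon$ is a graded submodule of the finitely generated module $F_0$ over the Noetherian ring $S$ (Noetherianity is Theorem~\ref{thm:HilbertBasis}), hence is itself finitely generated and graded. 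Iterating the construction with $\Omega_1$ in place of $M$, and so on, produces an exact complex
\[\cdots\lra F_2\overset{d_2}{\lra}F_1\overset{d_1}{\lra}F_0\overset{\varepsilon}{\lra}M\lra 0\]
with each $F_i$ finitely generated graded free and each differential satisfying $d_i(F_i)\subseteq\m F_{i-1}$; this last condition --- which is precisely what it means for the resolution to be \emph{minimal} --- holds because at every step we chose a set of homogeneous generators mapping to a $k$-basis of the quotient modulo $\m$.

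\emph{Stage 2: the length bound.} Because every $d_i$ has entries in $\m$, the complex $F_\bullet\oo_S k$ has all differentials equal to zero, so $\Tor^S_i(M,k)\cong F_i\oo_S k\cong k^{\mathrm{rank}(F_i)}$ for all $i\geq 0$. On the other hand, $\Tor^S_\bullet(M,k)$ may equally be computed from a free resolution of its second argument. The Koszul complex $K_\bullet=K_\bullet(x_1,\dots,x_n;S)$ is such a resolution of $k=S/\m$: since $x_1,\dots,x_n$ is a regular sequence in $S$, it is a free resolution of length exactly $n$. Hence $\Tor^S_i(M,k)=H_i(M\oo_S K_\bullet)=0$ for every $i>n$, as $M\oo_S K_\bullet$ vanishes in homological degrees above $n$. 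Comparing with the isomorphism above, $F_i=0$ for $i>n$, and the resolution collapses to
\[0\lra F_n\lra F_{n-1}\lra\cdots\lra F_1\lra F_0\lra M\lra 0.\]
(As an aside, this recovers Theorem~\ref{thm:Hilbpoly} via $H_M(z)=\sum_{i=0}^n(-1)^iH_{F_i}(z)$, the "approximation by free modules" alluded to above.)

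\emph{The main obstacle.} Stage 1 is essentially formal once the graded Nakayama lemma is in hand. The substance is Stage 2, and the one genuinely non-trivial input there is the acyclicity of the Koszul complex on the regular sequence $x_1,\dots,x_n$ --- equivalently, that $\pd_S k=n$. I expect this to be the step needing the most care: one either cites the standard fact that the Koszul complex on a regular sequence is acyclic, or proves it by a short induction on $n$, peeling off one variable at a time and using that $x_j$ is a nonzerodivisor modulo $(x_1,\dots,x_{j-1})$. A $\Tor$-free alternative is to induct on $n$ by comparing minimal resolutions over $S$ and $S/(x_n)$, but that forces one to handle the case where $x_n$ is a zerodivisor on $M$ and is, to my taste, messier than the Koszul route.
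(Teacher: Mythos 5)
The paper does not actually prove this theorem; it is stated with citations to \cite[Ch.~18]{matsumuraCA} and \cite[Thm.~1.13]{eisCA} and then used as a black box. Your argument is the standard proof, and it is correct: graded Nakayama produces a minimal (a priori possibly infinite) graded free resolution, minimality gives $\Tor_i^S(M,k)\cong F_i\oo_S k$, and computing $\Tor$ in the other variable via the length-$n$ Koszul resolution of $k$ forces $F_i=0$ for $i>n$. This is fully in the spirit of the paper, which immediately after the statement introduces the Koszul complex (Example~\ref{ex:Koszulcpx}) and the identity $\b_{i,j}=\dim_k\Tor_i^S(M,k)_j$; your proof just assembles those two ingredients. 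Two small remarks. First, to pass from $F_i\oo_S k=0$ to $F_i=0$ you should say explicitly that a finitely generated (free) graded module with $F_i/\m F_i=0$ vanishes by Nakayama; you leave this implicit. Second, acyclicity of the Koszul complex gives $\pd_S k\leq n$, which is all the argument needs; the phrase ``equivalently, that $\pd_S k=n$'' slightly overstates it, since the equality additionally requires $\Tor_n^S(k,k)\neq 0$ (true, by minimality of the Koszul complex, but not used here).
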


In this statement, \defi{minimal} just means that the entries of the matrices defining the maps between the free modules in the resolution have entries in the homogeneous maximal ideal $\m$ of $S$. Writing $F_i=\bigoplus_{j\in\bb{Z}}S(-j)^{\b_{i,j}}$, we call the multiplicities $\b_{i,j}=\b_{i,j}(M)$ the \defi{graded Betti numbers} of $M$. We say that $M$ has a \defi{pure resolution} if for each $i$ there is at most one value of $j$ for which $\b_{i,j}\neq 0$. It has a \defi{linear resolution} if $\b_{i,j}=0$ for $i\neq j$ (or more generally if there exists $c\in\bb{Z}$ such that $\b_{i,j}=0$ for $i-j\neq c$). It is not immediately obvious from (\ref{eq:minresM}) that the Betti numbers are uniquely determined by $M$, but this follows from their alternative more functorial characterization \cite[Prop.~1.7]{eis-syzygies}:
\[\b_{i,j}=\dim_k\Tor_i^S(M,k)_j.\]
Here we think of $i$ as the \defi{homological degree}, and of $j$ as the \defi{internal degree}.

\begin{example}[The Koszul complex]\label{ex:Koszulcpx}
 If we take $M$ equal to $k=S/(x_1,\cdots,x_n)$, the residue field, then its minimal graded free resolution is pure (even linear), given by the \defi{Koszul complex} on $x_1,\cdots,x_n$:
\[0\lra S(-n)^{n\choose n}\lra\cdots\lra S(-i)^{n\choose i}\lra\cdots\lra S(-1)^n\lra S\lra k\lra 0.\]
The graded Betti numbers are given by
\[\b_{i,j}(k)=
\begin{cases}
0 & j\neq i, \\	
\ds{n \choose i} & j=i.
\end{cases}
\]
\end{example}

The graded Betti numbers of a module are recorded into the \defi{Betti table}, where the entry in row $i$ and column $j$ is $\b_{i,i+j}$:
\[\begin{array}{c|cccc}
 & 0 & 1 & 2 & \cdots \\ \hline
0 & \b_{0,0} & \b_{1,1} & \b_{2,2} & \cdots \\
1 & \b_{0,1} & \b_{1,2} & \b_{2,3} & \cdots \\
\vdots & \vdots & \vdots & \vdots & \ddots \\
\end{array}
\]
The Koszul complex in Example~\ref{ex:Koszulcpx} has Betti table
\[\begin{array}{c|cccccccc}
 & 0 & 1 & 2 & \cdots & i & \cdots & n-1 & n \\ \hline
0 & 1 & n & \ds{n\choose 2} & \cdots & \ds{n\choose i} & \cdots & n & 1\\
\end{array}
\]

The Hilbert function, the Hilbert series, and therefore the Hilbert polynomial of $M$ can all be read off from the Betti table of $M$. We have
\[h_M(i)=\sum_{l=0}^n(-1)^l\cdot\sum_j{i-j+n-1\choose n-1}\b_{l,j},\]
and
\[H_M(z)=\frac{\sum_j\left(\sum_l (-1)^l\b_{l,j}\right)\cdot z^j}{(1-z)^n}.\]
In addition, the following basic invariants of $M$ are also encoded by the Betti table:
\begin{enumerate}
 \item \emph{Dimension}:
\[\dim(M)=\dim(S/\ann(M))=\deg(p_M)+1.\]

 \item \emph{Multiplicity}:
\[e(M) = (\dim(M)-1)!\cdot(\textrm{leading coefficient of }p_M).\]

 \item \emph{Projective dimension}:
\[\pd(M) = \textrm{length of the Betti table }(\textrm{the index of the last non-zero column}).\]

 \item \emph{Regularity}.
\[\reg(M) = \textrm{width of the Betti table }(\textrm{the index of the last non-zero row}).\]
\end{enumerate}

Some of the most fundamental problems in Commutative Algebra involve understanding the (relative) uniform properties that these invariants exhibit. For example, it is true generally that
\begin{equation}\label{eq:codimpdim}
\codim(I)\leq\pd(S/I)\leq n, 
\end{equation}
where the first inequality is a consequence of the Auslander-Buchsbaum formula \cite[Thm.~19.9]{eisCA}, while the second follows from Theorem~\ref{thm:Hilbsyzygy}. If the equality $\codim(I)=\pd(S/I)$ holds in (\ref{eq:codimpdim}) then we say that $S/I$ is \defi{Cohen-Macaulay}.

How can one best use the idea of uniformity when considering the Hilbert syzygy theorem? There
are several ways. One can relax the condition that $S$ be a polynomial ring, and consider possibly
infinite resolutions. But then what type of questions should be asked? We will consider this idea
below. One could also ask for absolute bounds on the invariants (1)--(4) under certain restrictions on the degrees of the generators of $I$. One such question is due to Mike Stillman:

\begin{question}[{\cite[Problem~3.8.1]{ps-open}}] Fix positive integers $d_1,\cdots,d_m$. Is $\pd(S/I)$ bounded when $S$ is allowed to vary over the polynomial rings in any number of variables, and $I$ over all the ideals $I=(f_1,\cdots,f_m)$, where each $f_i$ is a homogeneous polynomial of degree $d_i$? With the same assumptions, is $\reg(S/I)$ bounded?
\end{question}

It has been shown by Giulio Caviglia that the existence of a uniform bound for $\pd(S/I)$ is equivalent to the existence of one for $\reg(S/I)$ \cite[Thm.~29.5]{peeva}.

There has been an increasing amount of work on Stillman's question. Some of the recent papers
posted on the ArXiv include one by Ananyan and Hochster \cite{AH} which gives a positive answer
to Stillman's question when all the $d_i = 2$. The paper of Huneke, McCullough, Mantero, and
Seceleanu \cite{HMMS}, gives a sharp upper bound if all $d_i = 2$, and the codimension is also $2$.
See the exercises for more references. 

We mentioned the idea of passing to infinite resolutions. What could possibly be an analogue of
Hilbert's theorem in this case? One such analogue was proved by Hsin-Ju Wang \cite{W1,W2}. His
theorem gives an effective Hilbert syzygy theorem in the following sense. If $R$ is
regular of dimension $d$, then the syzygy theorem implies that modules have projective dimension
at most $d$, and in particular for every pair of $R$-modules $M$ and $N$, $Ext^{d+1}_R(M,N) = 0.$  In particular, if $R_\p$ is regular for some prime ideal $\p$,
then there is an element not in $\p$ which annihilates any given one of these higher Ext modules.  If the singular locus is
closed, this means that there is a power of the ideal defining it which annihilates any fixed $Ext^{d+1}_R(M,N)$. What one can
now ask is a natural question from the point of view of uniformity: is there a {\it uniform} annihilator
of these Ext modules as $M$ and $N$ vary? The result of Hsin-Ju Wang answers this question affirmatively:

\begin{theorem}  Let $(R,\m)$ be a $d$-dimensional complete Noetherian local ring, and let
$I$ be the ideal defining the singular locus of $R$. Let $M$ be a finitely generated $R$-module.
There exists an integer $k$ such that for all $R$-modules $N$, 
$$I^kExt^{d+1}_R(M,N) = 0.$$
\end{theorem}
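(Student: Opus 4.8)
The plan is to reduce, by a standard dimension shift, to a statement about $\mathrm{Ext}^1$, and then to exploit that $\mathrm{Ext}^1_R(L,-)$ is \emph{uniformly} annihilated whenever $L$ is locally free on the regular locus. Fix a (possibly infinite) free resolution $\cdots\to F_1\to F_0\to M\to 0$ and let $L=\Omega^d_R(M)$ be the $d$-th syzygy module. Since $\mathrm{Ext}^i_R(F,N)=0$ for all $i\ge 1$ and every free $F$, dimension shifting along this resolution gives $\mathrm{Ext}^{d+1}_R(M,N)\cong\mathrm{Ext}^1_R(L,N)$, naturally in $N$, with $L$ depending only on $M$. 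The key geometric input is that $L$ is \emph{locally free on the regular locus} $U=\Spec R\setminus V(I)$: for $\p\in U$ the ring $R_\p$ is regular of dimension $\operatorname{ht}(\p)\le d$, so $\pd_{R_\p}(M_\p)\le d$ by Auslander--Buchsbaum, and hence the $d$-th syzygy $L_\p=(\Omega^d_R M)_\p$ is projective, i.e.\ free over the local ring $R_\p$. Thus we are reduced to: if $L$ is a finitely generated $R$-module that is free at every prime outside $V(I)$, then there is an integer $k$ with $I^k\cdot\mathrm{Ext}^1_R(L,N)=0$ for every $R$-module $N$.

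For this, fix a surjection $\pi\colon P\twoheadrightarrow L$ with $P$ finitely generated free, let $\iota\colon\Omega\hookrightarrow P$ be the inclusion of its kernel $\Omega=\Omega^1_R(L)$, and recall that applying $\mathrm{Hom}_R(-,N)$ to $0\to\Omega\to P\to L\to 0$ gives, for \emph{every} $N$,
\[\mathrm{Ext}^1_R(L,N)\;\cong\;\operatorname{coker}\!\big(\iota^{*}\colon\mathrm{Hom}_R(P,N)\to\mathrm{Hom}_R(\Omega,N)\big).\]
Now take $N=\Omega$: the finitely generated $R$-module $E:=\mathrm{Ext}^1_R(L,\Omega)=\operatorname{coker}(\iota^{*}\colon\mathrm{Hom}_R(P,\Omega)\to\mathrm{Hom}_R(\Omega,\Omega))$ satisfies $E_\p=\mathrm{Ext}^1_{R_\p}(L_\p,\Omega_\p)=0$ for $\p\in U$, because $L_\p$ is projective there; hence $\operatorname{Supp}(E)\subseteq V(I)$, so $I\subseteq\sqrt{\ann_R E}$ and, by Noetherianity, $I^k\cdot E=0$ for some $k$.

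I claim the same $k$ works for all $N$. Let $x\in I^k$. Since $x$ kills $E$, the image of $x\cdot\mathrm{id}_\Omega$ in $E=\operatorname{coker}(\iota^{*})$ is zero, so there is a map $\rho\colon P\to\Omega$ with $\rho\circ\iota=x\cdot\mathrm{id}_\Omega$; crucially $\rho$ depends only on $x$ (and on $L,P$), not on $N$. Given any $R$-module $N$ and any $g\in\mathrm{Hom}_R(\Omega,N)$, we get $x\,g=g\circ(x\cdot\mathrm{id}_\Omega)=g\circ\rho\circ\iota=\iota^{*}(g\circ\rho)$, so $x\,g$ lies in the image of $\iota^{*}\colon\mathrm{Hom}_R(P,N)\to\mathrm{Hom}_R(\Omega,N)$. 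Hence $x$ annihilates $\mathrm{Ext}^1_R(L,N)$, and therefore $I^k\cdot\mathrm{Ext}^{d+1}_R(M,N)=0$ for all $N$ simultaneously.

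The entire content is concentrated in the last step: passing from ``$E=\mathrm{Ext}^1_R(L,\Omega)$ is $I$-power torsion'' — which by itself only yields $N$-dependent annihilation via the support — to a \emph{single} power of $I$; what makes this work is that the multiplier $x$ is realized by one fixed $R$-linear map $\rho$ between finitely generated modules, which is insensitive to $N$. A few points will need care in a full write-up: (i) the verification that $L_\p$ is free for $\p\in U$ uses $\pd_{R_\p}(M_\p)\le\dim R_\p\le d$ together with the fact that a $d$-th syzygy of a module of projective dimension $\le d$ is free over a local ring (trivially so when $M_\p=0$); (ii) the proof uses nothing about $R$ beyond the hypothesis that the singular locus equals $V(I)$ for an honest ideal $I$ — completeness enters only to guarantee this closedness (a complete local ring is a quotient of a complete regular local ring, hence excellent), and one may freely enlarge $I$ to its radical at the cost of changing $k$; (iii) the degenerate cases are absorbed automatically — if $R$ is regular then $\mathrm{Ext}^{d+1}_R(M,N)=0$ already, and if $\dim R=0$ (or more generally $I$ is nilpotent) then $I^k=0$ for large $k$.
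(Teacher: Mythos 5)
The paper does not supply a proof of this theorem; it is quoted as a result of Hsin-Ju Wang, supported by the citations \cite{W1,W2} and a sentence of commentary emphasizing that Wang's argument is \emph{effective}: he exhibits specific annihilators lying in the Jacobian ideal, constructed from a presentation of the ring (via Fitting ideals of free resolutions). Your argument is a correct but non-effective proof of the same existence statement, built around the classical ``universal annihilator'' device. The dimension shift $\mathrm{Ext}^{d+1}_R(M,N)\cong\mathrm{Ext}^1_R(L,N)$ with $L=\Omega^d M$, the freeness of $L_\p$ for $\p$ in the regular locus (via Auslander--Buchsbaum--Serre together with the fact that the $d$-th syzygy of a module of projective dimension at most $d$ is free over a local ring), the identification $\mathrm{Ext}^1_R(L,N)\cong\operatorname{coker}(\iota^*)$, and the lifting of $x\cdot\mathrm{id}_\Omega$ to a single map $\rho\colon P\to\Omega$ that is then composed with any $g\colon\Omega\to N$ are all correct and correctly assembled; the support estimate $\operatorname{Supp}\bigl(\mathrm{Ext}^1_R(L,\Omega)\bigr)\subseteq V(I)$ plus Noetherianity then produces the uniform power $k$. (In fact you only need $x$ to kill the class $[\mathrm{id}_\Omega]$, not all of $E$, but using all of $E$ does no harm.) Your remarks (i)--(iii) about local freeness, the role of completeness, and the degenerate cases are accurate.

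The tradeoff between your route and Wang's is exactly the one the paper flags. Your proof buys brevity and transparency: once the reduction to the $d$-th syzygy is made, the annihilation is a short formal argument valid for arbitrary $N$. What it gives up is effectivity: the step ``$I\subseteq\sqrt{\ann_R E}$, hence $I^k E=0$ by Noetherianity'' gives no control on $k$ nor on which elements of $I$ actually annihilate, whereas Wang names them inside the Jacobian ideal. For a complete write-up you should also make explicit that the free modules $F_i$ (and hence $P$) are chosen finitely generated, so that $L$, $\Omega$, and $E=\mathrm{Ext}^1_R(L,\Omega)$ are finitely generated $R$-modules; this is the hypothesis that makes the support-and-radical argument legitimate.
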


We can ask for more: can the uniform annihilators be determined effectively? The answer is yes.
If $R = S/\p$
is a quotient of a polynomial ring by a prime ideal, it is well-known (see \cite[Sec.~16.6]{eisCA}) that the singular locus is
inside the closed set determined by the appropriate size minors of the Jacobian matrix, and is equal
to this closed set if the ground field is perfect. Wang proves that one can use elements in the Jacobian ideal
to annihilate these Ext modules, so his result is also {\it effective} in the sense
that specific elements of the ideal $I$ can be constructed from a presentation of the algebra. 

Before we leave this first section, let's look at yet another famous theorem of Hilbert, his Nullstellensatz, which
identifies the nilradical of an ideal $I$ in a polynomial ring $S$ as the intersection of all maximal ideals that contain $I$.
What can one ask in order to change this basic result into a uniform statement?  One answer is that as it stands, the Nullstellensatz
is a theoretical description of the nilradical of $I$, but it is not effective in the sense that information about $I$ is not tied to
information about its nilradical.  There has been considerable work on making the Nullstellensatz ``effective". We quote one
result of Koll\'ar's \cite{Ko} (somewhat simplified):

\begin{theorem}  Let $I$ be a homogeneous ideal in $S = k[x_1,...,x_n]$.  Write $I = (f_1,...,f_m)$, where each $f_i$ is
homogeneous of degree $d_i\geq 3$.  Let $q$ be the minimum of $m$ and $n$. If we let $D = d_1d_2\cdots d_q$, then
$$(\sqrt{I})^D\subseteq I.$$
\end{theorem}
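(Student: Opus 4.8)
The plan is to recast the claim as an explicit geometric statement about $V(I)$ — a bound on the ``Noether exponent'' of $I$, the least $N$ with $(\sqrt I)^N\subseteq I$ — and then to control that exponent by a B\'ezout-type intersection count. \emph{Reductions.} Ideal membership is unchanged by flat base change, so I would assume $k$ is algebraically closed. Next I would reduce to $m\le n$: after reordering, assume $d_1\ge d_2\ge\cdots\ge d_m$, and when $m>n$ replace $f_1,\dots,f_m$ by $n$ forms $h_1,\dots,h_n\in I$ with $\deg h_j=d_j$, built as generic combinations $h_j=\sum_{i\ge j}\lambda_{ji}\,g_{ji}\,f_i$, where $g_{ji}$ is a generic form of degree $d_j-d_i\ge 0$ and $\lambda_{ji}$ a generic scalar. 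A Bertini argument gives $V(h_1,\dots,h_n)=V(I)$, while $(h_1,\dots,h_n)\subseteq I$ and $D=d_1\cdots d_n$ persist, so it suffices to treat $(h_1,\dots,h_n)$. Thus assume $I=(f_1,\dots,f_q)$ with $q\le n$ and $\deg f_i=d_i$; by Hilbert's Nullstellensatz the claim now says that every form $g$ vanishing on $V(I)\subseteq\bb{P}^{n-1}$ satisfies $g^D\in I$.

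To make $g^D\in I$ effective I would pass through integral closure. Since $S$ is regular and $I$ is generated by $q\le n=\dim S$ elements, the Brian\c{c}on--Skoda theorem gives $\ol{I^q}\subseteq I$, so it is enough to prove $g^D\in\ol{I^q}$. By the valuative description of integral closure this means $D\cdot v(g)\ge q\cdot v(I)$ for every divisorial valuation $v\ge 0$ on $S$, where $v(I)=\min_i v(f_i)$; valuations with $v(g)=0$ have $v(I)=0$ automatically, since some power of $g$ lies in $I$, so only $v(g)>0$ is at issue. Realizing $v$ as $\ord_E$ on a proper birational model $\pi\colon X\to\bb{P}^{n-1}$ with exceptional prime $E$, the remaining task is an effective \L ojasiewicz-type inequality, with exponent $D/q$, comparing the order of vanishing of $g$ along $E$ to the degree-weighted orders of the $f_i$. (Alternatively one can run Rabinowitsch's trick: $g\in\sqrt I$ iff $1\in I\cdot S[t]+(1-tg)$, and an effective membership bound for this unit, after setting $t=1/g$ and clearing denominators, returns $g^N\in I$ with $N$ controlled by the $d_i$.)

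The geometric heart — and the real content of Koll\'ar's theorem — is that these vanishing orders are governed \emph{sharply} by the degrees, so that the bound comes out as the \emph{product} $d_1\cdots d_q$ rather than a crude $d_1^n$. I expect this to be essentially all of the difficulty: a naive B\'ezout count on a compactification of the graph of $[f_1:\cdots:f_q]$ overcounts, and extracting the product requires, in characteristic zero, the multiplier ideals $\mc{J}(I^t)$ together with Skoda's theorem and Kawamata--Viehweg vanishing — resolving the base locus $V(I)$ and pushing sections down while tracking discrepancies so that the product of the $d_i$ emerges — and, in arbitrary characteristic, Koll\'ar's inductive argument that slices $\bb{P}^{n-1}$ by generic hyperplanes, peels off one $f_i$ at a time, and controls how the Noether exponent multiplies. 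The hypothesis $d_i\ge 3$ is precisely what makes this multiplicativity clean; for $d_i\in\{1,2\}$ one picks up an additive correction, which is why the sharp statement, and its later refinements, treat small degrees separately.
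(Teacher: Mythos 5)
The paper offers no proof of this result: it is quoted, in simplified form, from Koll\'ar's 1988 paper on the sharp effective Nullstellensatz, so there is no in-text argument against which to compare your proposal.

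Your reductions are sound and standard: passing to $\ol{k}$, cutting down to $q\le n$ general forms of degrees $d_1,\dots,d_q$, and recasting the Noether-exponent question via Brian\c{c}on--Skoda as ``$g^D\in\ol{I^q}$,'' hence as a valuative \L{}ojasiewicz inequality $D\cdot v(g)\ge q\cdot v(I)$. That is, up to the hard step, essentially the Ein--Lazarsfeld multiplier-ideal route to Koll\'ar's theorem. But the proposal stops exactly where the theorem starts. For that key inequality you write that it requires, in characteristic zero, ``the multiplier ideals $\mc{J}(I^t)$ together with Skoda's theorem and Kawamata--Viehweg vanishing'' and, in general, ``Koll\'ar's inductive argument,'' and you yourself call it ``essentially all of the difficulty.'' That is a pointer to the literature rather than an argument: nothing in the proposal explains \emph{why} the orders of vanishing of $g$ along exceptional divisors of a log resolution are controlled by the product $d_1\cdots d_q$ rather than by something like $d_1^n$, which is precisely the content of Koll\'ar's result. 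A secondary concern is that the Brian\c{c}on--Skoda detour is not free: $g^D\in\ol{I^q}$ is a priori stronger than $g^D\in I$, so you have traded the original membership problem for a different and not visibly easier one, and it would need to be checked that the stronger statement holds with the same sharp constant. Koll\'ar's own proof avoids this route, running an intersection-theoretic degree count on a compactification of the graph of $(f_1:\cdots:f_q)$ rather than passing through integral closure. In short, you have sketched the right landscape and named the right tools, but the central lemma --- the one that produces the product $d_1\cdots d_q$ --- is asserted rather than proved, so there is a genuine gap.
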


There are many variations, which include quadrics and non-homogeneous versions.  In fact Koll\'ar's theorem itself is more detailed
and specific. Notice that the number of the degrees $d_i$ in the product is absolutely bounded by the number of variables, even if the
number of generators of $I$ is extremely large compared to the dimension. The version we give is sharp, however, as the next example shows.

\begin{example} Let $S = k[x_1,...,x_n]$ be a polynomial ring
over a field $k$. Fix a degree $d$. Set \[f_1 = x_1^d,\ f_2 = x_1x_n^{d-1}-x_2^d,...,\ f_{n-1} = x_1x_n^{d-1}-x_{n-1}^d.\] If $I$ is
the ideal generated by these forms of degree $d$, then it is easy to see that the nilradical of $I$ is the ideal $(x_1,...,x_{n-1})$.
Moreover, $x_{n-1}^D\in I$ for $D = d^{n-1}$, but not for smaller values. We leave this fact to the reader to check.
\end{example}

\subsection*{Exercises:} \ 

\bigskip

\begin{enumerate}
\item[]\textbf{Generators.}
\medskip

\item\label{exer:maxngens} Let $k$ be a field. Prove that every maximal ideal in $k[x_1,\cdots,x_n]$ is generated by
$n$ elements. In particular, every prime ideal in $k[x,y]$ is generated by at most two 
elements.

\item\label{exer:maxgenshomogeneous} Suppose that $I$ is a homogeneous ideal in $S = k[x_1,\cdots,x_n]$ generated by forms of
degrees at most $d$, such that every variable is in the radical of $I$.
 Prove that $I$ can be generated by at most the number of minimal
generators of $\m^d$, where $\m = (x_1,\cdots,x_n)$.
Is the same statement true if one doesn't assume that the radical of $I$ contains $\m$?

\item Let $R$ be a standard graded ring over an infinite field, with homogeneous maximal ideal $\m$. We say that an $\m$-primary
homogeneous ideal $I$ is $\m$-full if for every general linear form $\ell$, $\m I:\ell = I$.
Prove that if $I$ is $\m$-full and $J$ is homogeneous and contains $I$, then the minimal
number of generators of $J$ is at most the minimal number of generators of $I$. 

\item Let $\p$ be a homogeneous prime ideal of a polynomial ring $S$ such that $\p$ contains no linear forms.
It is not known whether or not $\p$ is always generated by forms of degrees at most the
multiplicity of $S/\p$. Can you find examples where this estimate is sharp? What about if
$\p$ is not prime?

\medskip
\item[]\textbf{Radicals.}
\medskip

\item Let $M$ be an $n$ by $n$ matrix of indeterminates over the complex numbers $\mathbb C$, and let
$I$ be the ideal generated by the entries of the matrix $M^n$. Find $n$ polynomials generating an ideal with the same radical
as that of $I$. (Hint: use linear algebra.)

\item It is an unsolved problem whether or not every non-maximal prime ideal in a polynomial ring
$S = k[x_1,\cdots,x_n]$ can be generated up to radical by $n-1$ polynomials. Here is an explicit
example in which the answer is not known, from Moh. Let $\p$ be the defining ideal of the curve $k[t^6+t^{31}, t^8,t^{10}]$
in a polynomial ring in $3$ variables. Can you find a set of generators of $\p$?
It is conjectured that $\p$ is generated up to radical by $2$ polynomials. Why is this the least
possible number of polynomials that could generate $\p$ up to radical?
If the characteristic of $k$ is positive it is known that $\p$ is generated up to radical by $2$
polynomials. Assuming the characteristic is equal to $2$, find such polynomials.

\item Let $k=\bb{C}$ be the field of complex numbers, and let $\p$ be the defining ideal of the surface
$k[t^4,t^3s,ts^3,s^4]$. Find generators for $\p$, and find three polynomials which generate
$\p$ up to radical. It is unknown whether or not there are $2$ polynomials which generate
$\p$ up to radical, although this is known in positive characteristic. 

\item Let $S$ be a polynomial ring, and let $I$ be generated by forms of degrees $d_1,\cdots,d_s$.
Suppose that $f$ is in the radical of $I$, so that there is some $N$ such that
$f^N\in I$. Is there an effective bound for $N$? Take a guess. Find the best example you can
to see that $N$ must be large.

\item
Let $R$ be a regular local ring, and let $I \subseteq R$ be an ideal such that
$R/I$ is Cohen-Macaulay. Let 
\[F_\bullet := 0 \to F_n \overset{f_n}{\to}\dots \overset{f_1}{\to} R \to R/I \to 0\]
be a minimal free resolution of $R/I$.  Show that
$$
\sqrt I = \sqrt{I(f_1)} = \dots = \sqrt{I(f_n)},
$$
where $I(f_i)$ is the ideal of $R$ generated by the $k_i$--minors of $f_i$, where $k_i$ is maximal with the property that the $k_i$--minors of $f_i$ are not all zero.

\medskip

\item[]\textbf{Stillman's Question.}

\medskip

\item Let $S$ be a polynomial ring and let $I$ be an ideal generated by two forms. Show that the projective
dimension of $S/I$ is at most $2$. What well-known statement is this equivalent to?

\item Let $S$ be a polynomial ring. It is known that if $I$ is generated by three quadrics, then
the projective dimension of $S/I$ is at most $4$. Find an example to see that $4$ is attained,
and try to prove this statement.

\item The largest known projective dimension of a quotient $S/I$ where $I$ is generated by
three cubics and $S$ is a polynomial ring is $5$. Can you find such an example? (Hard.)

\item Prove the following strong form of Stillman's problem for monomial ideals: if
$I$ is generated by $s$ monomials in a polynomial ring $S$, then the projective dimension
of $S/I$ is at most $s$. 

\item What about binomial ideals? Is there a bound similar to that in the previous question?

\medskip

\item[]\textbf{Infinite Resolutions.}

\medskip

\item If $R = S/I$ where $S$ is a polynomial ring and $I$ has a Gr\"obner basis of quadrics, then
$R$ is Koszul, i.e. the residue field has a linear resolution. Prove this.

\item Suppose that $R = S/I$, where $S$ is a polynomial ring, and $I$ is homogeneous. If the regularity
of the residue field of $R$ is bounded, show that the regularity of every finitely generated
graded $R$-module $M$ is also bounded.

\item Find an example of a resolution of the residue field of a standard graded algebra so that the degrees of the
entries of the matrices in a minimal resolution (after choosing bases for the free modules) are at least any
fixed number $N$. It is a conjecture of Eisenbud-Reeves-Totaro that one can always choose bases of
the free modules in the resolution of a finitely generated graded module so that the entries in
the whole (usually infinite) set of matrices are bounded.

\item Let $R$ be a Cohen-Macaulay standard graded algebra which is a domain of multiplicity $e$. Prove that the $i$-th total
Betti number (the sum of all $\b_{i,j}$ for $j\in\bb{Z}$) of any quotient $R/I$ is at most $e$ times the $(i-1)$-st total Betti number of $R/I$ for large $i$. 
What sort of uniformity for total Betti numbers might one hope for?

\medskip

\item[]\textbf{Relations between Invariants.}

\medskip

\item Try to imagine a conjecture about effective bounds relating the multiplicity of $S/\p$, where $S$ is
a polynomial ring and $\p$ is a homogeneous prime not containing a linear form, and the regularity of
$S/\p$. Why should there be any relationship? Try the case in which $\p$ is generated by a regular
sequence of forms. 

\item Let $S$ be a polynomial ring, and let $I$ be an ideal generated by square-free monomials.
The  multiplicity of $S/I$ is just the number of minimal primes $\p$ over $I$ such that
the dimension of $S/\p$ is maximal.  Can you say
anything about the regularity? For example, what if $I$ is the edge ideal of a graph (see the discussion following Question~\ref{que:GVV})?

\item Is there any relationship at all between the projective dimension (resp. regularity)
of a quotient $S/I$ ($S$ a polynomial ring, $I$ a homogeneous ideal) and the projective
dimension (resp. regularity) of $S/\sqrt{I}$?  Try to give examples or formulate a problem.

\item Answer the previous question when $I$ is generated by monomials.
\end{enumerate}

\section{Reduction to characteristic $p$ and integral closure}\label{sec:redmodp}

In this section we will discuss the solution to an uniformity question of John Mather, and illustrate in the process two important concepts in Commutative Algebra: reduction to characteristic $p$, and integral closure. Throughout this section, $S$ will denote the power series ring $\bb{C}[[x_1,\cdots,x_n]]$. Given $f\in S$, we will write $f_i$ for its $i$-th partial derivative $\ds\frac{\partial f}{\partial x_i}$. We write $J(f)$ for the \defi{Jacobian ideal} of $f$, $J(f)=(f_1,\cdots,f_n)$.

\begin{question}[Mather {\cite[Que.~13.0.1]{hun-swa}}]\label{que:Mather}
 Consider an element $f\in S$ satisfying $f(0)=0$. Does there exist an uniform integer $N$ such that $f^N\in J(f)$?
\end{question}

The answer to this turns out to be positive as we'll explain shortly, and in fact one can take $N=n$. Notice however that there is no a priori reason for such an $N$ to even exist, that is, for $f$ to be contained in $\sqrt{J(f)}$. Let's first look at some examples:
\begin{itemize}
 \item If $f=x_1^2+x_2^2$ then $J(f)=(2x_1,2x_2)$, so $f\in J(f)$.
 
 \item If $f=x^2-x$ then since $f_1=2x-1$ is a unit, we get $f\in J(f)$.
 
 \item If $f$ is a \defi{homogeneous polynomial}, or more generally a \defi{quasi-homogeneous} one, then $f\in J(f)$ (recall that $f$ is said to be \defi{quasi-homogeneous} if there exist weights $d$ and $\om_i\in\bb{Z}_{\geq 0}$ with the property that $f(t^{\om_1}x_1,\cdots,t^{\om_n}x_n)=t^d f(x_1,\cdots,x_n)$). The conclusion $f\in J(f)$ follows from the quasi-homogeneous version of Euler's formula:
\[\sum_{i=1}^n \om_i\cdot x_i\cdot f_i=d\cdot f.\]

 \item Even when $f$ is not quasi-homogeneous, there might exist an analytic change of coordinates which transforms it into a quasi-homogeneous polynomial, so that the conclusion $f\in J(f)$ still holds. For example, if $f=(x_1-x_2^2)\cdot(x_1-x_2^3)$, one can make the change of variable $y_1=x_1-x_2^2$, $y_2=x_2\cdot\sqrt{1-x_2}$ and get $f=y_1\cdot(y_1+y_2^2)$ which is quasi-homogeneous for $\om_1=2$, $\om_2=1$ and $d=4$. The following theorem gives a partial converse to this observation:
\end{itemize}

\begin{theorem}[Saito {\cite{saito}}]\label{thm:saito}
 If the hypersurface $f=0$ has an isolated singularity (or equivalently $\sqrt{J(f)}=(x_1,\cdots,x_n)$) then $f$ is quasi-homogeneous (with respect to some analytic change of coordinates) if and only if $f\in J(f)$.
\end{theorem}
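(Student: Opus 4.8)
The implication ``quasi-homogeneous $\Rightarrow f\in J(f)$'' is the easy one. If an analytic isomorphism $\phi$ of $(\bb{C}^n,0)$ carries $f$ to a quasi-homogeneous $g=f\circ\phi$ with weights $\om_i$ and degree $d$, then the quasi-homogeneous Euler identity $\sum_i\om_i\,y_i\,(\partial g/\partial y_i)=d\cdot g$ shows $g\in J(g)$. On the other hand the chain rule gives $\partial g/\partial y_j=\sum_i(f_i\circ\phi)\cdot(\partial\phi_i/\partial y_j)$, and since the Jacobian matrix $(\partial\phi_i/\partial y_j)$ is invertible over the local ring $S$, we get $J(g)=\phi^\ast J(f)$; hence $g\in J(g)$ is equivalent to $f\in J(f)$. (No assumption on the singularity is used here.)

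For the converse, assume $f\in J(f)$ and pick any expression $f=\sum_i a_i f_i$. The plan is to study the analytic vector field $D=\sum_i a_i\,(\partial/\partial x_i)$, which satisfies $D(f)=f$, and to bring it into a linear ``Euler'' normal form. The first step is to check that $D$ vanishes at the origin: if $D(0)\neq 0$, the flow-box theorem provides analytic coordinates $y$, centered at $0$, in which $D=\partial/\partial y_1$, so $D(f)=f$ becomes $\partial f/\partial y_1=f$, giving $f=e^{y_1}\cdot g(y_2,\cdots,y_n)$. Since $f\in\m^2$ (the origin is a singular point of $f$, as $\sqrt{J(f)}=(x_1,\cdots,x_n)$ forces $J(f)\subseteq\m$), one finds $g\in\m^2$, and then $f$ either vanishes identically or has the whole $y_1$-axis in its singular locus --- contradicting $\sqrt{J(f)}=(x_1,\cdots,x_n)$. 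Thus $D$ vanishes at $0$ and has a well-defined linear part $B\in\textrm{End}(\bb{C}^n)$, acting on $S$ as a linear derivation.

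The crux is to find analytic coordinates $y_1,\cdots,y_n$ in which $D$ becomes $\sum_i w_i\,y_i\,(\partial/\partial y_i)$ for suitable positive integers $w_i$. Granting this, $D(f)=f$ reads $\sum_i w_i\,y_i\,(\partial f/\partial y_i)=N\cdot f$ in the new coordinates, where $N$ is a common denominator ($w_i/N$ being the eigenvalues of $B$); a short computation then shows that $t^{-N}f(t^{w_1}y_1,\cdots,t^{w_n}y_n)$ is independent of $t$, i.e. $f$ is quasi-homogeneous of degree $N$ with weights $w_i$. Putting $D$ in this normal form splits into two tasks: (i) showing that $B$ is semisimple with positive rational eigenvalues; and (ii) a Poincar\'{e}--Dulac-type argument eliminating all higher-order terms of $D$ --- once the eigenvalues are positive, only finitely many monomials are resonant, the non-Euler terms can be removed degree by degree, and the resulting coordinate change is seen to converge (or one passes from a formal to an analytic solution via Artin approximation).

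I expect step (i) to be the main obstacle, and it is exactly where the isolated-singularity hypothesis is indispensable. Comparing lowest-degree parts of $D(f)=f$ gives only $B(f_\mu)=f_\mu$, with $f_\mu$ the initial form of $f$; this alone controls neither the semisimplicity of $B$ nor the signs of its eigenvalues. To pin those down one must use that the Milnor algebra $S/J(f)$ is finite-dimensional over $\bb{C}$: exploiting how $D$ operates on this finite-dimensional algebra is what should force the nilpotent part of $B$ to vanish and its eigenvalues to be positive (and rational, hence, after clearing denominators, the integers $w_i$ above). Converting this finiteness input into the needed rigidity of $B$ --- ruling out hidden nilpotency and non-positive or irrational weights --- is the technical heart of Saito's theorem.
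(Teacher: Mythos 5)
The paper does not actually prove Theorem~\ref{thm:saito}: it states the result and cites Saito's 1971 paper \cite{saito}, so there is no in-paper proof to compare against. Judged on its own terms, your proposal gets the elementary parts right. The forward direction (quasi-homogeneous $\Rightarrow f\in J(f)$, together with the observation that $J(f)$ transforms correctly under an analytic change of coordinates because the Jacobian of the change of coordinates is a unit matrix over $S$) is correct. The flow-box argument showing that a vector field $D=\sum a_i\,\partial/\partial x_i$ with $D(f)=f$ must vanish at the origin is also correct, and it is indeed the first genuine step of Saito's argument.

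However, you then openly acknowledge that you have not carried out what you yourself call the ``technical heart'' of the theorem: showing that the linear part $B$ of $D$ is semisimple with strictly positive rational eigenvalues, and that a convergent Poincar\'e--Dulac-type normalization brings $D$ to the Euler form $\sum w_i y_i\,\partial/\partial y_i$. That admitted omission is not a minor detail --- it is essentially the entire content of Saito's proof. Without it, you cannot even conclude that $D$ is linearizable (a general analytic vector field vanishing at $0$ need not be), let alone that the weights $w_i$ are positive integers. The vague appeal to ``exploiting how $D$ operates on the finite-dimensional Milnor algebra $S/J(f)$'' is the right place to look, but it is a placeholder, not an argument; in particular $D$ does not a priori descend to an operator on $S/J(f)$, so even setting up that action requires work. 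As written, the proposal is a reasonable strategy outline rather than a proof, and the gap you flag is precisely where the theorem's difficulty resides.
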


We begin answering Question~\ref{que:Mather} by looking first at the case $n=1$: writing $t$ for $x_1$, $S=\bb{C}[[t]]$ is a DVR, so any non-zero $f\in S$ can be written as $f=t^i\cdot u$, where $u(0)\neq 0$, i.e. $u$ is a unit; we get
\[\frac{\partial f}{\partial t}=t^{i-1}\cdot\left(i\cdot u+t\cdot\frac{\partial u}{\partial t}\right)=t^{i-1}\cdot\textrm{unit},\]
so $f\in J(f)$. This calculation in fact shows that $f\in J(f)$ even for $n>1$, provided that we first make a ring extension to a power series ring in one variable. More precisely, assume that $n>1$ and consider an embedding $S\hookrightarrow K[[t]]$, given by $x_i\mapsto x_i(t)$, where $K$ is any field extension of $\bb{C}$. The above calculation shows that $f$ is contained in the ideal of $K[[t]]$ generated by the derivative $df/dt$. The Chain Rule yields
\[\frac{df}{dt}=\sum_{i=1}^n f_i\cdot\frac{dx_i}{dt}\in J(f)\cdot K[[t]],\]
so we conclude that $f\in J(f)\cdot K[[t]]$. This motivates the following

\begin{definition}[Integral closure of ideals]\label{def:idealintclosure}
 Given an ideal $I\subset S$, the \defi{integral closure} $\ol{I}$ of $I$ is defined by
\[
\begin{split}
\ol{I}=\{g\in S:\varphi(g)\in\varphi(I)\ &\textrm{ for every field extension }\bb{C}\subset K, \\
&\textrm{ and every }\bb{C}\textrm{--algebra homomorphism }\varphi:S\to K[[t]]\}. 
\end{split}
\]
\end{definition}

We have thus shown that $f\in\ol{J(f)}$, so our next goal is to understand better the relationship between an ideal and its integral closure. We have the following result.

\begin{theorem}[Alternative characterizations of integral closure {\cite[Thm.~6.8.3,\ Cor.~6.8.12]{hun-swa}}]\label{thm:intclosure}
 Given an ideal $I\subset S$ and an element $g\in S$, the following statements are equivalent:
\begin{enumerate}[(a)]
 \item $g\in\ol{I}$.
 
 \item There exist $k\in\bb{Z}_{\geq 0}$ and $s_i\in I^i$ for $i=1,\cdots,k,$ such that
\[g^k = s_1\cdot g^{k-1}+\cdots+s_i\cdot g^{k-i}+\cdots+s_k.\]

 \item There exists $c\in S\setminus\{0\}$ such that 
 \[c\cdot g^m\in I^m\textrm{ for every }m\in\bb{Z}_{\geq 0}.\]
\end{enumerate}
\end{theorem}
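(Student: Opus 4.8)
\emph{Proof proposal.} All three conditions turn out to be equivalent to the statement that $gt$ is integral over the Rees algebra $\mc R=\bigoplus_{m\ge 0}I^m t^m\subseteq S[t]$, and the plan is to prove the cycle (c)$\Rightarrow$(b)$\Rightarrow$(a)$\Rightarrow$(c). Note that $\mc R$ is a Noetherian domain (finitely generated over $S$, contained in the domain $S[t]$). We may assume $I\neq 0$: if $I=0$, then (a) applied to the $\mathbb{C}$-algebra homomorphism obtained below from the $\m$-adic order valuation forces $g=0$, and all three conditions hold trivially. For (c)$\Rightarrow$(b): from $c\cdot g^m\in I^m$ for all $m$ we get $c\cdot(gt)^m\in\mc R$ for all $m$, hence $c\cdot\mc R[gt]\subseteq\mc R$ and $\mc R[gt]\subseteq\tfrac1c\mc R$. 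Since $c\neq 0$ and $\mc R$ is Noetherian, $\tfrac1c\mc R$ is a finitely generated $\mc R$-module, so $\mc R[gt]$ is too, and $gt$ is integral over $\mc R$. As $gt$ is homogeneous of degree $1$ and $\mc R$ is graded, one may take a homogeneous integral equation $(gt)^k+s_1t(gt)^{k-1}+\dots+s_kt^k=0$ with $s_i\in I^i$; comparing coefficients of $t^k$ gives $g^k=-s_1g^{k-1}-\dots-s_k$, which is (b) after absorbing signs.

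For (b)$\Rightarrow$(a): let $K\supseteq\bb{C}$ be a field and $\varphi\colon S\to K[[t]]$ a $\bb{C}$-algebra homomorphism; we must show $\varphi(g)$ lies in the ideal generated by $\varphi(I)$. If $I\subseteq\ker\varphi$, then $\varphi(s_i)=0$ for all $i\geq 1$, so $\varphi(g)^k=0$ and $\varphi(g)=0$. Otherwise set $N=\min_{a\in I}\ord_t\varphi(a)<\infty$, so that $\varphi(I)$ generates $(t^N)$ and $\ord_t\varphi(s_i)\geq iN$. Applying $\varphi$ to the equation in (b): if $\ord_t\varphi(g)<N$, then the $i$-th term on the right has $t$-order at least $iN+(k-i)\ord_t\varphi(g)>k\cdot\ord_t\varphi(g)=\ord_t\varphi(g)^k$, a contradiction; hence $\ord_t\varphi(g)\geq N$, i.e. $\varphi(g)\in\varphi(I)K[[t]]$.

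For (a)$\Rightarrow$(c): the key step is to show $gt$ is integral over $\mc R$. Granting this, $\mc R'=\mc R[gt]$ is module-finite over $\mc R$, so the conductor $\mathfrak{c}=(\mc R:_{\mc R}\mc R')$ is nonzero (standard, since $\mc R\subseteq\mc R'$ is module-finite with common fraction field $\mathrm{Frac}(S)(t)$) and homogeneous (since everything is graded); picking a nonzero homogeneous $ct^e\in\mathfrak{c}$ with $0\neq c\in I^e$, we get $ct^e\cdot(gt)^m\in\mc R_{m+e}=I^{m+e}t^{m+e}$ for all $m$, i.e. $cg^m\in I^{m+e}\subseteq I^m$, which is (c). To see $gt$ is integral over $\mc R$: since $\mc R$ is a Noetherian domain and $gt\in\mathrm{Frac}(\mc R)$, by Krull's theorem it suffices to check $gt\in W$ for every DVR $W$ with $\mc R\subseteq W\subseteq\mathrm{Frac}(\mc R)$. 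Fix such a $W$, write $v=\ord_W$ and $e=v(t)\in\bb{Z}$. If $v$ is trivial on $S\setminus\{0\}$, then $v(a)=v(g)=0$ and $v(at)\geq 0$ forces $e\geq 0$, so $v(gt)=e\geq 0$. If $v$ is nontrivial on $S$, let $V\subseteq\mathrm{Frac}(S)$ be its valuation ring, a DVR, and pass to its completion $\widehat V$; since everything is equicharacteristic zero and $\bb{C}\hookrightarrow\widehat V/\m_{\widehat V}=:k'$, Cohen's structure theorem gives $\widehat V\cong k'[[t]]$, and the composite $\varphi\colon S\hookrightarrow V\to\widehat V=k'[[t]]$ is a $\bb{C}$-algebra homomorphism with $\ord_t\varphi(s)=\lambda\cdot v(s)$ for a fixed $\lambda>0$ and all $s\in S\setminus\{0\}$. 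By (a), $\varphi(g)$ lies in the ideal generated by $\varphi(I)$, so $v(g)\geq v(a_0)$ for some $a_0\in I$ realizing $\min_{a\in I}v(a)$; since $a_0t\in\mc R\subseteq W$ we have $e+v(a_0)\geq 0$, whence $v(gt)=e+v(g)\geq e+v(a_0)\geq 0$. In all cases $gt\in W$.

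The main obstacle is the implication (a)$\Rightarrow$(c): the implications (c)$\Rightarrow$(b) and (b)$\Rightarrow$(a) are short bookkeeping with the Rees algebra and with $t$-adic orders, but converting the valuative condition (a) into an actual integral equation requires (i) the valuative criterion for integral closure, reducing the question to the DVRs lying between $\mc R$ and its fraction field, and (ii) realizing an arbitrary such DVR, after restriction to $S$, as a concrete $\bb{C}$-algebra map $S\to k'[[t]]$ via completion and Cohen's structure theorem, which is exactly the form to which hypothesis (a) can be applied.
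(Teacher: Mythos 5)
The paper does not prove this theorem; it is cited to Huneke--Swanson \cite[Thm.~6.8.3, Cor.~6.8.12]{hun-swa} and then discussed informally. So there is no paper proof to compare against, and I will judge your argument on its own terms.

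Your proof is correct, and it is a clean version of the standard Rees-algebra approach. A few remarks on the points where care is actually needed. In (c)$\Rightarrow$(b), the passage from ``$\mc R[gt]$ is a finitely generated $\mc R$-module'' to ``$gt$ is integral'' is the determinant trick, and the extraction of a \emph{homogeneous} integral equation is legitimate because $\mc R[gt]\subseteq S[t]$ is a graded extension with $gt$ of degree $1$, so the degree-$k$ component of any integral equation is again an integral equation. In (b)$\Rightarrow$(a), the inequality $\ord_t\varphi(s_i)\ge iN$ does use that $s_i\in I^i$ (not just $s_i\in I$), which is exactly why the definition of integral dependence puts the $i$-th coefficient in $I^i$. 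In (a)$\Rightarrow$(c), the two potentially delicate steps are (i) the valuative criterion over the Noetherian domain $\mc R$ in terms of \emph{DVRs} only, which rests on Krull--Akizuki and which you correctly flag by invoking Noetherianity, and (ii) the realization of the restriction $V=W\cap\mathrm{Frac}(S)$ as a $\bb{C}$-algebra map $S\to k'[[t]]$ via completion and Cohen's structure theorem; here one needs to choose the coefficient field of $\widehat V$ to contain the image of $\bb C$, which is possible since $\bb C$ injects into the residue field. The case split on whether $v|_{\mathrm{Frac}(S)}$ is trivial is necessary and handled correctly, as is the edge case $I=0$ (where $\mathrm{Frac}(\mc R)=\mathrm{Frac}(S)$, so the Rees-algebra discussion would degenerate). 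The conductor argument closing (a)$\Rightarrow$(c) is the standard one: module-finiteness gives a nonzero conductor, gradedness makes it homogeneous, and a homogeneous conductor element is precisely the $c$ in (c). I see no gap.
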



It is worth thinking for a moment about the differences between these three characterizations. In fact, they are very different, and
we shall need all three of them. The first characterizes integral closure in a non-constructive way, since the definition depends
on arbitrary homomorphisms to discrete valuation rings. Nonetheless, we have seen the power of this definition by using it to show
that power series are integral over the ideal generated by their partials. The second characterization shows that one needs only a finite
set of data to determine integral closures. In particular, it is clear from this characterization that integral closure behaves well under numerous operations
such as homomorphisms. Finally, the third characterization is the easiest to use in the sense that it is a weak condition, but
the condition by its nature involves  an infinite set of equations. 

Since $f\in\ol{J(f)}$, part (b) of Theorem~\ref{thm:intclosure} implies that $f^k\in J(f)$ for some $k$, but in principle $k$ could depend on $f$. The goal of the rest of this section is to show that we can choose $k=n$, independently of $f$. We will do so by passing to characteristic $p>0$. A word of caution is in order here, which is that the conclusion $f\in\sqrt{J(f)}$ fails in positive characteristic: if $f=g^p$, then $J(f)=0$. Nevertheless, we will prove the following:

\begin{theorem}[{\cites{bri-sko,lip-sat}, \cite[Ch.~13]{hun-swa}}]\label{thm:A}
 Assume that $R$ is a regular local ring of characteristic $p>0$, and consider an ideal $J=(g_1,\cdots,g_t)$ in $R$. If $g\in\ol{J}$ then $g^t\in J$.
\end{theorem}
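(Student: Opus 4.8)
I would prove Theorem~\ref{thm:A} by exploiting the Frobenius endomorphism, which is the characteristic-$p$ tool that makes integral closure tractable. The key idea is that in characteristic $p$, the $e$-th Frobenius power $J^{[p^e]} = (g_1^{p^e}, \dots, g_t^{p^e})$ of an ideal is much smaller than the ordinary power $J^{p^e}$, yet it turns out that membership of Frobenius powers of $g$ can be detected using only $t$ generators rather than $\binom{p^e + t - 1}{t-1}$ of them. So the strategy is: (i) use characterization (c) of Theorem~\ref{thm:intclosure} to produce a nonzero $c \in R$ with $c g^m \in J^m$ for all $m$; (ii) specialize to $m = p^e$ to get $c g^{p^e} \in J^{p^e}$ for all $e$; (iii) relate $J^{p^e}$ to the Frobenius power $J^{[p^e]}$ in order to conclude $c g^{p^e} \in J^{[\lceil p^e / t\rceil \cdot \text{something}]}$ — more precisely, observe that $J^{t(p^e - 1) + 1} \subseteq J^{[p^e]}$ by a pigeonhole argument on monomials in $g_1, \dots, g_t$; (iv) deduce $c \cdot (g^t)^{p^e} = c g^{t p^e} \in J^{[p^e]}$ for all $e$; and (v) apply a "tight-closure"-style limiting argument using the regularity of $R$ to strip off the multiplier $c$ and conclude $(g^t)^{p^e} \in J^{[p^e]}$ for all $e$, hence $g^t \in J$ by faithful flatness of Frobenius over a regular ring.

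Let me expand on the crucial combinatorial and limiting steps. For (iii): any product $g_{i_1} g_{i_2} \cdots g_{i_N}$ with $N = t(p^e - 1) + 1$ must, by pigeonhole, use some generator $g_j$ at least $p^e$ times, so $J^N \subseteq J^{[p^e]}$; applying this with $g^{tp^e} = (g^t)^{p^e}$ and noting $c g^{tp^e} \in J^{tp^e} \subseteq J^{t(p^e-1)+1} = J^{t p^e - t + 1}$... wait, I need $tp^e \geq t(p^e - 1) + 1$, i.e. $0 \geq -t + 1$, i.e. $t \geq 1$, which holds. So indeed $c g^{t p^e} \in J^{[p^e]}$ for every $e \geq 0$. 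For (v): since $R$ is regular, the Frobenius map $R \to R$ (or more precisely $F_*^e R$) is flat, so $J^{[p^e]} :_R c$ behaves well — the standard argument is that $(g^t)^{p^e} \in (J^{[p^e]} :_R c)$, and one shows $g^t \in J$ by a colon-capturing argument: if $g^t \notin J$, then since $R$ is regular (hence $J$ is the intersection of its Frobenius powers in an appropriate sense, or one uses that $R$ is a domain and localizes at a minimal prime of $J : g^t$), the element $c$ would have to lie in arbitrarily high powers of the maximal ideal of that localization, forcing $c = 0$, a contradiction. This is precisely the mechanism by which tight closure equals integral closure for principal-up-to-multiplier statements; alternatively one cites that in a regular ring tight closure is trivial, so $g^t \in J^* = J$.

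**The main obstacle.** The delicate point is step (v), the removal of the multiplier $c$. The naive hope "$c g^{tp^e} \in J^{[p^e]}$ for all $e$ implies $g^{tp^e} \in J^{[p^e]}$" is false in general rings and genuinely needs regularity of $R$ — specifically the flatness of Frobenius, which gives $\bigcap_e (J^{[p^e]} : c) = (\bigcap_e J^{[p^e]}) : c$ type control, or the Krull-intersection-theorem argument at a localization. I expect the cleanest writeup is to invoke that in a regular local ring the tight closure $J^*$ of any ideal equals $J$ itself (Hochster–Huneke), after checking that the containments $c(g^t)^{p^e} \in J^{[p^e]}$ for all $e \gg 0$ are exactly the defining condition for $g^t \in J^*$. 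The combinatorial input (iii) is routine once set up correctly, and steps (i)–(ii) are immediate from Theorem~\ref{thm:intclosure}(c); so essentially all the content is in knowing that regular rings are $F$-regular (weakly or otherwise), which is the characteristic-$p$ fact the theorem is really built on.
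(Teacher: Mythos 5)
Your proposal is correct and follows essentially the same route as the paper. Steps (i)--(iv) are exactly the paper's argument: take $c$ from characterization (c) of integral closure, specialize to $m=tp^e$, apply the pigeonhole bound $J^{tp^e}\subseteq J^{[p^e]}$, and obtain $c\,(g^t)^{p^e}\in J^{[p^e]}$ for all $e$. For step (v), the paper packages the multiplier-stripping into a self-contained ``Test for Ideal Membership'' (Theorem~\ref{thm:idealmembership}): if $g^t\notin J$ then $(J:g^t)$ is a proper ideal, and flatness of Frobenius gives $(J:g^t)^{[p^e]}=(J^{[p^e]}:g^{tp^e})$, so $c\in\bigcap_e(J:g^t)^{[p^e]}\subseteq\bigcap_e(J:g^t)^{p^e}=0$ by Krull intersection --- a contradiction. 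This is exactly the ``tight-closure'' mechanism you identify; in fact the paper's footnote to that theorem makes the connection to tight closure explicit. Your suggested localization at a minimal prime of $J:g^t$ is unnecessary (one works directly in the regular \emph{local} ring $R$, where $(J:g^t)\subseteq\m$ and Krull intersection applies), but this is a harmless variation. In short: same decomposition, same pigeonhole lemma, same reliance on flatness of Frobenius and Krull intersection; the paper simply writes out the colon-capturing argument rather than citing weak F-regularity as a black box.
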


The advantage of working in positive characteristic is the existence of the \defi{Frobenius endomorphism} $F$ sending every element $x$ to $x^p$. In the case of a regular local ring, the Frobenius endomorphism is in fact flat \cite{kunz}, which yields the following:

\begin{theorem}[Test for Ideal Membership]\label{thm:idealmembership}\begin{footnote}{This test for ideal membership has been conceptualized into an important closure operation called tight closure (see \cite{HH1}). If $R$ is a Noetherian ring of
characteristic $p$, $I$ is an ideal, and $x\in R$, we say that $x$ is in the {\it tight closure}  of $I$ if there exists an element $c\in R$, not in any minimal prime of $R$,
such that $cx^{p^e}\in I^{[p^e]}$ for all large $e$. The set of all elements in the tight closure of $I$ forms a new ideal $I^*$, called the {\it tight closure of $I$}.}\end{footnote}
 Consider a regular local ring $R$ of characteristic $p>0$, an ideal $J=(g_1,\cdots,g_t)$, and an element $g\in R$. We have that $g\in J$ if and only if there exists $c\in R\setminus\{0\}$ such that for all $e$, $c\cdot g^{p^e}$ is in the \defi{Frobenius power} 
\[J^{[p^e]}:=(g_1^{p^e},\cdots,g_t^{p^e}).\]
\end{theorem}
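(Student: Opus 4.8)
The plan is to treat the ``only if'' direction by the characteristic-$p$ binomial identity, and the substantive ``if'' direction by combining the flatness of the Frobenius endomorphism with Krull's intersection theorem.

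For the forward implication, write $g=\sum_{i=1}^{t}a_ig_i$ with $g\in J$. Since $x\mapsto x^{p^e}$ is additive in characteristic $p$, we get $g^{p^e}=\sum_{i=1}^{t}a_i^{p^e}g_i^{p^e}\in(g_1^{p^e},\dots,g_t^{p^e})=J^{[p^e]}$ for all $e$, so the condition holds with $c=1$ (and no hypothesis on $R$ is needed here). For the converse I would first record two facts about the iterated Frobenius $F^e\colon R\to R$, $x\mapsto x^{p^e}$. First, for any ideal $L=(h_1,\dots,h_m)$ the extension of $L$ along $F^e$ is generated by the $h_j^{p^e}$ — again by the additivity of $x\mapsto x^{p^e}$ — so it equals the Frobenius power $L^{[p^e]}=(h_1^{p^e},\dots,h_m^{p^e})$. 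Second, by Kunz's theorem the fact that $R$ is regular local forces $F^e$ to be flat (indeed faithfully flat, being a local homomorphism, though plain flatness suffices). Flat base change is compatible with colon ideals: tensoring the exact sequence $0\to(J:_Rg)/J\to R/J\overset{g}{\longrightarrow}R/J$ with $R$ regarded as an $R$-algebra via $F^e$ identifies the kernel of multiplication by $g^{p^e}$ on $R/J^{[p^e]}$ with $\left((J:_Rg)/J\right)^{[p^e]}$, yielding $\left(J^{[p^e]}:_R g^{p^e}\right)=(J:_Rg)^{[p^e]}$.

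Now the hypothesis $c\cdot g^{p^e}\in J^{[p^e]}$ says precisely that $c\in\left(J^{[p^e]}:_Rg^{p^e}\right)=(J:_Rg)^{[p^e]}$ for every $e$. Set $L=(J:_Rg)$ and suppose, toward a contradiction, that $g\notin J$, i.e. that $L$ is a proper ideal, hence $L\subseteq\m$. Then $L^{[p^e]}\subseteq\m^{[p^e]}\subseteq\m^{p^e}$, so $c\in\bigcap_{e\geq 0}\m^{p^e}\subseteq\bigcap_{k\geq 0}\m^{k}=0$ by the Krull intersection theorem in the Noetherian local ring $R$ — contradicting $c\neq 0$. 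Therefore $L=R$, that is, $g\in J$. The only genuinely nontrivial ingredient is the flatness of Frobenius on a regular local ring (Kunz), quoted just before the statement; granting that, the argument is bookkeeping with flat base change and Krull intersection, and the step that requires the most care is the identification of the $F^e$-extension of an ideal with its Frobenius power, together with the resulting compatibility of colons with that extension.
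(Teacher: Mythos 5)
Your proof is correct and follows essentially the same route as the paper's: both reduce to $(J^{[p^e]}:g^{p^e})=(J:g)^{[p^e]}$ via flatness of Frobenius (Kunz), then apply Krull intersection to force $c=0$ if $g\notin J$. The cosmetic differences are harmless: you work with the left-exact sequence $0\to(J:g)/J\to R/J\xrightarrow{\,g\,}R/J$ where the paper uses the four-term short exact sequence ending in $R/(J,g)$, and you bound $(J:g)^{[p^e]}\subseteq\m^{[p^e]}\subseteq\m^{p^e}$ where the paper uses $(J:g)^{[p^e]}\subseteq(J:g)^{p^e}$ directly; either chain of inclusions feeds Krull intersection equally well. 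You also supply the (trivial) forward direction, which the paper omits.
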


\begin{proof}
 Assume that $g\notin J$ so that $(J:g)=\{x\in R:x\cdot g\in J\}$ is a proper ideal, and consider the exact sequence
\begin{equation}\label{eq:colonses}
0\lra R/(J:g)\overset{g}{\lra} R/J\lra R/(J,g)\lra 0.
\end{equation}
Since $F$ is flat, pulling back (\ref{eq:colonses}) along $F^e$ preserves exactness, yielding the sequence
\begin{equation}\label{eq:F*ses}
0\lra R/(J:g)^{[p^e]}\overset{g^{p^e}}{\lra} R/J^{[p^e]}\lra R/(J,g)^{[p^e]}\lra 0.
\end{equation}
Since $(J,g)^{[p^e]}=(J^{[p^e]},g^{[p^e]})$, we get by comparing (\ref{eq:F*ses}) with the analogue of (\ref{eq:colonses})
\[0\lra R/(J^{[p^e]}:g^{p^e})\overset{g^{p^e}}{\lra} R/J^{[p^e]}\lra R/(J^{[p^e]},g^{p^e})\lra 0\]
that $(J:g)^{[p^e]}=(J^{[p^e]}:g^{p^e})$. The condition $c\cdot g^{p^e}\in J^{[p^e]}$ for all $e$ then becomes
\[c\in\bigcap_{e\geq 0}(J:g)^{[p^e]}\subset\bigcap_{e\geq 0}(J:g)^{p^e}=0,\]
where the last equality follows from the Krull Intersection Theorem \cite[Cor.~5.4]{eisCA}.
\end{proof}

\begin{proof}[Proof of Theorem~\ref{thm:A}] By Theorem~\ref{thm:idealmembership}, it suffices to find $c\neq 0$ such that $c\cdot(g^t)^{p^e}\in J^{[p^e]}$. Since $g\in\ol{J}$, we know by Theorem~\ref{thm:intclosure}(c) that there exists $c\neq 0$ such that $c\cdot g^m\in J^m$ for all $m$. Taking $m=t\cdot p^e$, we have in particular that $c\cdot(g^t)^{p^e}\in J^{t\cdot p^e}$. Since $J^{t\cdot p^e}$ is generated by monomials $g_1^{i_1}\cdots g_t^{i_t}$, with $i_1+\cdots+i_t=t\cdot p^e$, for each such monomial at least one of the exponents $i_j$ satisfies $i_j\geq p^e$. It follows that $J^{t\cdot p^e}\subset J^{[p^e]}$, so $c\cdot(g^t)^{p^e}\in J^{[p^e]}$, concluding the proof of the theorem.
\end{proof}

We now explain the last ingredient needed to answer Mather's question, which is reduction to characteristic $p$. We will use it to show that the statement of Theorem~\ref{thm:A} holds in characteristic $0$ for the power series ring $S$:

\begin{theorem}\label{thm:Achar0}
 Let $S=\bb{C}[[x_1,\cdots,x_n]]$ and consider an ideal $I=(f_1,\cdots,f_t)$ in $S$. If $f\in\ol{I}$ then $f^t\in I$.
\end{theorem}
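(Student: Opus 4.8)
The plan is to deduce the characteristic-$0$ statement (Theorem~\ref{thm:Achar0}) from its positive-characteristic counterpart (Theorem~\ref{thm:A}) by a standard ``reduction mod $p$'' argument, together with the characterizations of integral closure in Theorem~\ref{thm:intclosure}. The one subtlety is that the hypothesis $f\in\ol{I}$ in the power series ring $S=\bb{C}[[x_1,\dots,x_n]]$ is, as stated, a condition involving all homomorphisms to valuation rings; so I would first replace it by the algebraic condition in Theorem~\ref{thm:intclosure}(b): there exist $k$ and $s_i\in I^i$ with
\[
f^k = s_1 f^{k-1} + \dots + s_k.
\]
This is a \emph{finite} set of data, which is exactly what makes descent possible.

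\textbf{Descent to a finitely generated setting.} All the coefficients appearing in $f_1,\dots,f_t$, in $f$, and in the elements $s_i$ (expressed as polynomial combinations of products of the $f_j$'s) involve only finitely many complex numbers. Let $A\subset\bb{C}$ be the subring generated over $\bb{Z}$ by these finitely many numbers; then $A$ is a finitely generated $\bb{Z}$-algebra, and everything in sight lives in $R_A := A[[x_1,\dots,x_n]]$ (or, if one prefers to stay Noetherian and avoid power-series-over-a-general-ring issues, in a suitable localization/completion of $A[x_1,\dots,x_n]$ at the relevant maximal ideal — this is where I expect the main bookkeeping to lie). Over $R_A$ we have an ideal $I_A=(f_1,\dots,f_t)$, the element $f$, and the integral dependence equation $f^k=s_1 f^{k-1}+\dots+s_k$ with $s_i\in I_A^i$; by Theorem~\ref{thm:intclosure}(b) (applied over $A$, where the valuative formulation still makes sense) this says $f\in\ol{I_A}$.

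\textbf{Specialization to characteristic $p$.} Since $A$ is a finitely generated $\bb{Z}$-algebra that is not the zero ring, it has a maximal ideal $\m_A$ with $A/\m_A$ a finite field of some characteristic $p>0$ (generic flatness / the Nullstellensatz over $\bb{Z}$). Reducing mod $\m_A$ gives a regular local ring $\bar R := (A/\m_A)[[x_1,\dots,x_n]]$ of characteristic $p$, an ideal $\bar I=(\bar f_1,\dots,\bar f_t)$, and the reduced equation $\bar f^{\,k}=\bar s_1\bar f^{\,k-1}+\dots+\bar s_k$ with $\bar s_i\in\bar I^{\,i}$; hence $\bar f\in\ol{\bar I}$ by Theorem~\ref{thm:intclosure}(b). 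Theorem~\ref{thm:A} now applies: $\bar f^{\,t}\in\bar I$, i.e. $\bar f^{\,t}=\sum_{j=1}^t \bar h_j\bar f_j$ for some $\bar h_j\in\bar R$.

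\textbf{Lifting back to characteristic $0$.} The membership $f^t\in I$ that we want is, over $R_A$, the solvability of a system of linear equations $f^t=\sum h_j f_j$ in the unknown ``coefficients'' $h_j\in R_A$ — equivalently, $f^t$ lies in the image of the $R_A$-linear map $R_A^t\to R_A$. Failure of such membership is detected by the cokernel being nonzero in the appropriate graded/local sense; by generic flatness of the cokernel over $\Spec A$, the locus of primes of $A$ where $f^t\notin I$ (fiberwise) is a constructible set, and it is \emph{closed} under specialization to the special fibers only if... — the cleaner route is: the set of $\p\in\Spec A$ for which $\bar f^{\,t}\notin \bar I$ in the fiber is open, and we have just shown it misses the closed point $\m_A$; since it is open and $A$ is a domain with $(0)$ in the closure of $\m_A$, it must also miss the generic point, giving $f^t\in I\otimes_A\operatorname{Frac}(A)$, hence $f^t\in I$ over $\bb{C}\supseteq A$. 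Then $f^t\in I$ in $S=\bb{C}[[x_1,\dots,x_n]]$ follows by flat base change $R_A\to S$ (or simply because the lifted relation $f^t=\sum h_j f_j$ can be written with $h_j\in S$ once one has it over a field containing $A$).

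\textbf{Where the difficulty is.} The conceptual content is entirely in Theorem~\ref{thm:A}; the work here is the standard-but-fiddly descent apparatus: choosing $A$ so that everything is defined and $R_A$ is Noetherian, making sure the three characterizations of integral closure (especially the passage between (a) and (b)) are genuinely stable under the ring map $A\to A/\m_A$, and arguing that ideal membership, which can only \emph{fail} on a closed subset in bad generality, in fact holds generically once it holds at one special fiber. I expect the main obstacle to be handling power series rings over a non-field cleanly — most treatments either work with the completion of $A[x_1,\dots,x_n]_\m$ or invoke Artin approximation to replace $S$ by the Henselization of $\bb{C}[x_1,\dots,x_n]$ at the origin, and I would follow the reference \cite[Ch.~13]{hun-swa} for the precise formulation of this step rather than reproduce it here.
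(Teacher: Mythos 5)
Your overall strategy — use Theorem~\ref{thm:intclosure}(b) to convert $f\in\ol I$ into a finite equation, descend to a finitely generated $\bb{Z}$-algebra, reduce mod a maximal ideal, and invoke Theorem~\ref{thm:A} — is exactly the strategy of the paper. But the descent step, which is the entire technical content of the proof, is broken as you have written it. You assert that ``all the coefficients appearing in $f_1,\dots,f_t$, in $f$, and in the elements $s_i$\dots involve only finitely many complex numbers.'' This is false: a power series in $\bb{C}[[x_1,\dots,x_n]]$ has, in general, infinitely many coefficients, and there is no reason for them to lie in any finitely generated $\bb{Z}$-subalgebra of $\bb{C}$. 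Consequently the ring $R_A=A[[x_1,\dots,x_n]]$ you propose (for $A$ finitely generated over $\bb{Z}$) need not contain $f$, the $f_i$, or the $s_i$ at all. You gesture at this (``this is where I expect the main bookkeeping to lie'' and the mention of Artin approximation), but you neither identify the correct tool nor carry out the step, and this is not mere bookkeeping: the paper's proof invokes a major theorem, N\'eron desingularization (Artin--Rotthaus), which expresses $S=\bb{C}[[x_1,\dots,x_n]]$ as a directed union of \emph{smooth finite-type} $\bb{C}[x_1,\dots,x_n]$-algebras $T$. One then chooses $T$ large enough to contain the finitely many \emph{elements} $f,f_j,s_i,c^i_\alpha$, writes $T$ as a quotient of a polynomial ring, and only \emph{then} descends to a finitely generated $\bb{Z}$-algebra $A$ and a model $T_A$. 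Without such an approximation theorem, there is no way to get from the power series ring, with its uncountably many coefficients, to a finitely generated setting, and your proof does not close this gap.

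A second, smaller problem is your ``lifting back'' paragraph. The topological reasoning there is reversed: the generic point $(0)$ of $\Spec A$ is not in the closure of the closed point $\m_A$ (it is the other way around), and an open set can perfectly well miss a closed point while containing the generic point, so nothing about $(0)$ follows from the behavior at $\m_A$. The paper sidesteps this entirely by arguing by contradiction: assume $f^t\notin I$, show (via the injection $T\hookrightarrow S$ and then generic flatness of $T_A/I_A$ over $A$) that $f^t\notin I_{T_A}$ persists after reduction modulo a \emph{generic} maximal ideal of $A$, and then contradict Theorem~\ref{thm:A}. In that argument the relevant locus (where multiplication by $f^t$ on $T_A/I_A$ is nonzero on the fiber) is handled by generic flatness, with the correct direction of specialization, and no lifting of witnesses $h_j$ is ever required.
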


\begin{proof} Suppose that the conclusion of the theorem fails, so $f^t\notin I$. The idea is to produce a regular ring $R$ in characteristic $p$, an ideal $J\subset R$ and an element $g\in R$ that fail the conclusion of Theorem~\ref{thm:A}, obtaining a contradiction. The point here is that the hypotheses of Theorem~\ref{thm:A} depend only on finite amount of data, which can be carried over to positive characteristic: this is essential in any argument involving reduction to characteristic $p$.
 
We now need a major theorem:  N\'eron desingularization \cite{art-rot} states that we can write $S$ as a directed union of smooth $\bb{C}[x_1,\cdots,x_n]$-algebras. This amazing
theorem allows one to descend from power series, which a priori have infinitely many coefficients, to a more finite situation. Given any finite subset, $\mc{S}$ say, of $S$, we can choose one such algebra $T$ containing $\mc{S}$. According to the equivalent description of the condition $f\in\ol{I}$ in part (b) of Theorem~\ref{thm:intclosure}, there exist $k$ and elements $s_i\in I^i$ such that
\begin{equation}\label{eq:finIbar}
f^k=\sum_i s_i\cdot f^{k-i}. 
\end{equation}
To express the containment $s_i\in I^i$, we choose coefficients $c^i_{\a}\in S$ such that 
\begin{equation}\label{eq:siinIi}
s_i=\sum_{\substack{\a=(\a_1,\cdots,\a_t)\\ \a_1+\cdots+\a_t=i}}c^i_{\a}\cdot f_1^{\a_1}\cdots f_t^{\a_t}.
\end{equation}
We will then require the smooth subalgebra $T$ of $S$ to contain $\mc{S}=\{f,f_j,s_i,c^i_{\a}\}$ for all $j,i$ and $\a$. We write $I_T$ for the ideal $(f_1,\cdots,f_t)$ of $T$. Since $f^t\notin I=I_T S$, it must be that $f^t\notin I_T$. Since $T$ contains $\mc{S}$, (\ref{eq:siinIi}) can be interpreted as an equality in $T$, which yields $s_i\in I_T^i$. Furthermore, (\ref{eq:finIbar}) is an equation in $T$, so Theorem~\ref{thm:intclosure} applies to show that $f\in\ol{I_T}$.

Since $T$ is smooth over $\bb{C}[x_1,\cdots,x_n]$, it is in particular a finite type algebra over $\bb{C}$, so it can be written as a quotient of a polynomial ring $\bb{C}[y_1,\cdots,y_r]$ by some ideal $(h_1,\cdots,h_s)$. Each of the elements of $\mc{S}$ is then represented by the class of some polynomial in $\bb{C}[y_1,\cdots,y_r]$, so collecting the coefficients of all these polynomials, as well as the coefficients of $h_1,\cdots,h_s$, we obtain a finite subset $\mc{A}\subset\bb{C}$. We define $A=\bb{Z}[\mc{A}]$ to be the smallest subring of $\bb{C}$ containing $\mc{A}$. $A$ is a finitely generated $\bb{Z}$-algebra, and we can consider the ring $T_A=A[y_1,\cdots,y_r]/(h_1,\cdots,h_s)$. $T_A$ is called a \defi{model} of $T$, having the property that $T_A\oo_A\bb{C}=T$. Moreover, $T_A$ contains all the elements of $\mc{S}$ (we use here an abuse of language: what we mean is that if we think of $A[y_1,\cdots,y_r]$ as a subring of $\bb{C}[y_1,\cdots,y_r]$, then every element of $\mc{S}$ is represented by some 
polynomial in $A[y_1,\cdots,y_r]$). We write $I_{T_A}$ for the ideal of $T_A$ generated by $f_1,\cdots,f_t$, and conclude as before that $f^t\notin I_{T_A}$ and $f\in\ol{I_{T_A}}$.

We are now ready to pass to characteristic $p>0$. We first need to observe that if we write $Q(A)$ for the quotient field of $A$, then $T_A\oo_A Q(A)$ is smooth over $Q(A)$, i.e. the map $A\to T_A$ is generically smooth: this follows from the fact that $T$ is smooth over $\bb{C}$, together with the fact that applying the Jacobian criterion to the map $\bb{C}\to T=\bb{C}[y_1,\cdots,y_r]/(h_1,\cdots,h_s)$ is the same as applying it to $Q(A)\to T_A\oo_A Q(A)=Q(A)[y_1,\cdots,y_r]/(h_1,\cdots,h_s)$. It follows that for a generic choice of a maximal ideal $\n\subset A$, the quotient $R=T_A/\n T_A$ is smooth over the finite field $A/\n$, so in particular it is a regular local ring. Writing $g$ (resp. $g_i$) for the class of $f\in T_A$ (resp. $f_i\in T_A$) in the quotient ring $R$, letting $J=(g_1,\cdots,g_t)$, and observing that the equations (\ref{eq:finIbar}) and (\ref{eq:siinIi}) descend to $R$, we get that $g\in\ol{J}$. The condition $g^t\notin J$ follows from generic flatness and the genericity assumption on 
$\n$: $T_A$ is a finite type algebra over $A$, and multiplication by $f^t$ on $(T_A/I_A)\oo_A Q(A)$ is non-zero (if it were zero, then it would also be zero on $(T_A/I_A)\oo_A Q(A)\oo_{Q(A)}\bb{C}=T/I$, but $f^t\notin I$), i.e. multiplication by $f^t$ on $T_A/I_A$ is generically non-zero. It follows that for a generic choice of $\n$, the ring $R$ is a regular local ring in characteristic $p>0$, containing an ideal $J=(g_1,\cdots,g_t)$ and an element $g\in\ol{J}$ with $g^t\notin J$. This is in contradiction with Theorem~\ref{thm:A}, concluding our proof.
\end{proof}

\subsection*{Exercises:} \ 

\bigskip

\begin{enumerate}
 \item Let $S=\bb{C}[[x_1,\cdots,x_n]]$, let $f\in S$ with $f(0) = 0$, and let $\m$ be the maximal
ideal of~$S$. Prove that $f\in \overline{\m\cdot J(f)}$. It is not known whether or not $f\in \m\cdot\overline{J(f)}$.
If true, this would give a positive solution to the Eisenbud-Mazur conjecture (see Exercise~\ref{exer:EisenbudMazur} in Section~\ref{sec:symbolic}).

\item Let $f(t), g(t)$ be polynomials with coefficients in a ring $R$, say
$f(t) = a_nt^n+\cdots+a_0$, and $g(t) = b_nt^n+\cdots+b_0$.  Let $c_i$ be the coefficient
of $t^i$ in the product $fg$. Prove that the ideal generated by $a_ib_j$ is integral
over the ideal generated by $c_{2n},\cdots,c_0$.

\item Let $S$ be a polynomial ring in $n$ variables, and let $g_1,\cdots,g_n$ be a regular
sequence of forms of degree $d$ (equivalently assume that they are forms of degree
$d$, and that the radical of the ideal they generate is the homogeneous maximal ideal).
Prove that $\ol{(g_1,\cdots,g_n)}=\m^d$.
\end{enumerate}
\medskip
\section{Uniform Artin Rees}
\bigskip

In the last section we saw how to use characteristic $p$ techniques (in a power series ring over the field $\bb{C}$) 
in order to give a uniform bound on the power of an element
in the integral closure of an ideal $I$ to be contained in $I$. A more general result was first proved
by Brian\c con and Skoda \cite{bri-sko} for convergent power
series over the complex numbers,  and later generalized to arbitrary regular
local rings by Lipman and Sathaye \cite{lip-sat}. 

\begin{theorem} \cite{bri-sko,lip-sat} Let $R$ be a regular local ring and let
$I$ be an ideal generated by $\ell$ elements. Then for all
$n\geq \ell$, $$\overline{I^n}\subseteq I^{n-\ell+1}.$$
\end{theorem}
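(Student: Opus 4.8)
The plan is to follow the template of the proofs of Theorems~\ref{thm:A} and~\ref{thm:Achar0}: prove the statement first when $R$ has characteristic $p>0$, by a Frobenius argument based on the Test for Ideal Membership, and then handle an arbitrary regular local ring by reduction modulo $p$. We may assume $n\geq\ell$ (if $n<\ell$ then $I^{n-\ell+1}=R$ and there is nothing to prove), and we write $I=(f_1,\dots,f_\ell)$.

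\textbf{The characteristic $p$ case.} Let $R$ be regular local of characteristic $p>0$ and let $g\in\overline{I^{n}}$. By Theorem~\ref{thm:idealmembership} applied to the ideal $I^{n-\ell+1}$, it suffices to find $c\in R\setminus\{0\}$ with $c\cdot g^{p^{e}}\in (I^{n-\ell+1})^{[p^{e}]}$ for every $e$. Since $g$ lies in the integral closure of the ideal $I^{n}$, Theorem~\ref{thm:intclosure}(c) provides $c\neq 0$ with $c\cdot g^{m}\in (I^{n})^{m}=I^{nm}$ for all $m$; taking $m=p^{e}$ gives $c\cdot g^{p^{e}}\in I^{np^{e}}$. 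Thus everything reduces to the combinatorial containment
\[
I^{np^{e}}\ \subseteq\ \bigl(I^{n-\ell+1}\bigr)^{[p^{e}]},
\]
valid for all $e\geq 0$. Here $I^{np^{e}}$ is generated by the monomials $f_{1}^{a_{1}}\cdots f_{\ell}^{a_{\ell}}$ with $a_{1}+\dots+a_{\ell}=np^{e}$, while $(I^{n-\ell+1})^{[p^{e}]}$ is generated by the monomials $f_{1}^{b_{1}p^{e}}\cdots f_{\ell}^{b_{\ell}p^{e}}$ with $b_{1}+\dots+b_{\ell}=n-\ell+1$. Given such a monomial of the first kind, put $b_{i}=\lfloor a_{i}/p^{e}\rfloor$ and write $a_{i}=b_{i}p^{e}+r_{i}$ with $0\leq r_{i}\leq p^{e}-1$; then $b_{i}p^{e}\leq a_{i}$, and from $np^{e}=(b_{1}+\dots+b_{\ell})p^{e}+(r_{1}+\dots+r_{\ell})$ together with $r_{1}+\dots+r_{\ell}\leq \ell(p^{e}-1)<\ell p^{e}$ we get $b_{1}+\dots+b_{\ell}>n-\ell$, hence $b_{1}+\dots+b_{\ell}\geq n-\ell+1$. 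Choosing $b_{i}'\leq b_{i}$ with $b_{1}'+\dots+b_{\ell}'=n-\ell+1$, the monomial $f_{1}^{b_{1}'p^{e}}\cdots f_{\ell}^{b_{\ell}'p^{e}}$ is a generator of $(I^{n-\ell+1})^{[p^{e}]}$ dividing $f_{1}^{a_{1}}\cdots f_{\ell}^{a_{\ell}}$. This proves the containment and finishes the characteristic $p$ case. (It generalizes the inclusion $J^{t p^{e}}\subseteq J^{[p^{e}]}$ used in the proof of Theorem~\ref{thm:A}, which is the case $n=\ell$.)

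\textbf{Reduction to characteristic $p$.} For a general regular local ring $R$, first pass to the completion $\widehat R$: since the integral equation of Theorem~\ref{thm:intclosure}(b) witnessing $g\in\overline{I^{n}}$ persists in $\widehat R$, we have $g\in\overline{I^{n}\widehat R}$, and since $\widehat R$ is faithfully flat over $R$ we have $I^{n-\ell+1}\widehat R\cap R=I^{n-\ell+1}$; so it is enough to treat $\widehat R$. By Cohen's structure theorem a complete regular local ring is a power series ring over a field or over a complete DVR. In the equicharacteristic $p$ case the previous step applies. In the equicharacteristic $0$ case, $\widehat R=k[[x_{1},\dots,x_{d}]]$ with $\mathrm{char}\,k=0$, and one repeats the argument of the proof of Theorem~\ref{thm:Achar0} verbatim: assuming $g\notin I^{n-\ell+1}$, use N\'eron desingularization to descend the finitely many relevant data (the relation from Theorem~\ref{thm:intclosure}(b), expressions for its coefficients as in~\eqref{eq:siinIi}, and the generators $f_{1},\dots,f_{\ell}$) to a smooth $k$-algebra, then to a finite-type $\bb{Z}$-algebra model, and finally reduce modulo a generic maximal ideal to obtain a regular local ring of characteristic $p$ in which $g\in\overline{I^{n}}$ but $g\notin I^{n-\ell+1}$, contradicting the characteristic $p$ case.

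\textbf{Main obstacle.} The combinatorial lemma is routine; the genuine difficulty is the mixed characteristic case of a complete regular local ring $V[[x_{1},\dots,x_{d-1}]]$ with $V$ a mixed characteristic complete DVR, where Frobenius is not available on all of $R$ and the reduction-mod-$p$ machinery does not apply directly. This is precisely where the Lipman--Sathaye Jacobian theorem \cite{lip-sat} enters; for these notes one may either invoke it as a black box, or be content with the equicharacteristic case, which the two steps above settle completely.
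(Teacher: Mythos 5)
The paper does not actually supply a proof of this theorem; it is stated with citations to Brian\c con--Skoda and Lipman--Sathaye, the preceding sections having only established the special case $n=\ell$ phrased element-wise (Theorem~\ref{thm:A} and Theorem~\ref{thm:Achar0}). Your proposal fills this gap by pushing the same two ingredients further, and the argument you give is correct in the equicharacteristic setting.

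The characteristic-$p$ step is sound. The application of Theorem~\ref{thm:idealmembership} to $J=I^{n-\ell+1}$ and of Theorem~\ref{thm:intclosure}(c) to produce $c$ with $c\,g^{p^e}\in I^{np^e}$ is exactly parallel to the proof of Theorem~\ref{thm:A}, and your pigeonhole computation establishing $I^{np^e}\subseteq(I^{n-\ell+1})^{[p^e]}$ is correct: from $\sum a_i=np^e$ and $a_i=\lfloor a_i/p^e\rfloor p^e+r_i$ with $\sum r_i\leq\ell(p^e-1)<\ell p^e$ one indeed gets $\sum\lfloor a_i/p^e\rfloor\geq n-\ell+1$, and then $\prod f_i^{b_i'p^e}$ divides $\prod f_i^{a_i}$. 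This genuinely generalizes the inclusion $J^{tp^e}\subseteq J^{[p^e]}$ used in the paper, which is your $n=\ell$ case.

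The reduction to characteristic $p$ also tracks the paper's proof of Theorem~\ref{thm:Achar0}: passing to $\widehat R$ is harmless because the integral equation from Theorem~\ref{thm:intclosure}(b) persists and $I^{n-\ell+1}\widehat R\cap R=I^{n-\ell+1}$ by faithful flatness, and the N\'eron desingularization/model argument descends the finitely many data $\{f_j,\, s_i,\, c^i_\alpha\}$ exactly as before (Artin--Rotthaus applies to power series rings over an arbitrary field, not just $\bb C$, so the change of base field causes no trouble). One small inaccuracy: a complete regular local ring of ramified mixed characteristic need not be a power series ring over a complete DVR, so the clean trichotomy you invoke via Cohen's theorem is slightly off; but since you defer mixed characteristic to Lipman--Sathaye anyway, this does not affect the substance. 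You are right that this last case is where Frobenius and reduction mod $p$ genuinely fail and where the Lipman--Sathaye Jacobian argument is needed; flagging it explicitly rather than glossing over it is the honest thing to do. In sum, your proof is a correct and more self-contained treatment of the equicharacteristic case than the paper offers, and it correctly isolates what must be taken on faith.
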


Although apparently $\ell$ depends on the number of generators of $I$, in fact it can be made uniform. This
is because if the residue field of $R$ is infinite, every ideal is integral over an ideal generated by
$d$ elements, where $d$ is the dimension of $R$.  If the residue field is not infinite, then one can make
a flat base change to that case and still prove that one can always choose $\ell = d$. 

What about for Noetherian local rings which are not regular?  Is there a uniform integer $k$ such that
for all ideals $I$, if $n\geq k$ then $$\overline{I^n}\subseteq I^{n-k+1}?$$

The following conjecture was made in \cite{Hu}:

\begin{conjecture} Let $R$ be a reduced excellent Noetherian ring
of finite Krull dimension.  There exists an integer $k$, depending
only on $R$, such that for every ideal $I\subseteq R$, and all $n\geq k$,
$$
\overline{I^{n}}\subseteq I^{n-k}.
$$

\end{conjecture}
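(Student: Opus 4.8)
The plan is to deduce the stated containment from the more robust \emph{uniform Artin--Rees property}: for each $h$ there should be an integer $k$, depending only on $R$ and $h$, with $I^{n}M\cap N\subseteq I^{\,n-k}N$ for all ideals $I$, all $n\geq k$, and all inclusions $N\subseteq M$ of finitely generated modules with $M$ generated by at most $h$ elements. One proves this by Noetherian induction on $\dim R$, and only at the end specializes to $M=R$, $N=\overline{I^{n}}$. The reason to carry the module statement is that, unlike $\overline{I^{n}}\subseteq I^{\,n-k}$ (which is already false for $I=0$ in a non-reduced ring), the Artin--Rees property persists through the non-reduced quotients that the induction unavoidably produces.

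\textbf{Reductions.} First I would reduce to the case of a complete local domain. Localizing at a maximal ideal and completing is harmless because $R$ is excellent, and the passage from the resulting local constants to one global $k$ is folded into the Noetherian induction (this is where $\dim R<\infty$ enters). Reducedness then allows me to replace $R$ by the module-finite overring $\prod_{i}R/\p_{i}$, and a \emph{patching lemma} --- roughly, \emph{if $R\hookrightarrow S$ is module-finite with conductor $\mathfrak c$, or if $c\in R$ is a nonzerodivisor, then the uniform property for $S$, $R/\mathfrak c$ and $S/\mathfrak c$ (resp.\ for $R_{c}$ and $R/cR$) implies it for $R$} --- reduces me to the individual domains $R/\p_{i}$, and, after applying the lemma once more to the normalization, to a complete local normal domain $R$ of dimension $d$; the quotients $R/\mathfrak c$ and $S/\mathfrak c$ occurring here have dimension $<d$, so are covered by the inductive hypothesis. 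Enlarging the residue field (a faithfully flat move that does not disturb integral closures) lets me assume every ideal $I$ has a reduction generated by $d$ elements.

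\textbf{A multiplier from Lipman--Sathaye, and its removal.} By Cohen's structure theorem $R$ is module-finite over a complete regular local ring $A$, and the extension of fraction fields may be taken separable (automatically so in mixed characteristic). The Lipman--Sathaye theorem then produces a nonzero Jacobian ideal $\mathfrak J_{R/A}$ with $\mathfrak J_{R/A}\cdot\overline{I^{n}}\subseteq I^{\,n-d+1}$ for $n\geq d$; fixing $0\neq c\in\mathfrak J_{R/A}$ gives $c\cdot\overline{I^{n}}\subseteq I^{\,n-d+1}$. (In equal characteristic $p$ one may instead take $c$ to be a uniform test element and invoke the tight-closure form of the Brian\c con--Skoda theorem.) Now $R_{c}$ is regular, hence has the uniform property by the classical Lipman--Sathaye/Brian\c con--Skoda theorem for regular rings, while $\dim R/cR=d-1$ so $R/cR$ has it by induction; the patching lemma for $(c)$ then yields a uniform $k_{1}$ with $(c)\cap I^{m}\subseteq c\,I^{\,m-k_{1}}$, and for $x\in\overline{I^{n}}$ we get $cx\in(c)\cap I^{\,n-d+1}\subseteq c\,I^{\,n-d+1-k_{1}}$, so after cancelling the nonzerodivisor $c$, $\overline{I^{n}}\subseteq I^{\,n-k}$ with $k=d-1+k_{1}$. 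The base case $\dim R\leq 1$ is handled directly: the normalization is a finite product of complete discrete valuation rings, in which integral closure of ideals is transparent, and the conductor measures the gap.

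\textbf{The hard part.} I expect the obstacle to be twofold. First, the patching lemma must be established in exactly the generality that makes the induction self-supporting --- in particular for the \emph{non-reduced} rings $R/\mathfrak c$ and $R/cR$ that appear (this is the real reason for working with the module form of Artin--Rees rather than the integral-closure statement), and with constants genuinely uniform over the family of all ideals $I$ and over the strata of $\Spec R$. Second, and more seriously, the Lipman--Sathaye input --- or some replacement for it that still furnishes a single uniform multiplier of all $\overline{I^{n}}$ into a fixed power of $I$ --- has to be available in mixed characteristic and for excellent rings not containing a field, where reduction to characteristic $p$ and the geometric reading of the Jacobian ideal are not at hand. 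It is precisely the failure, so far, to secure these two points in complete generality that keeps the statement a conjecture; the cases in which it is known --- rings essentially of finite type over an excellent ring of Krull dimension at most one, $F$-finite rings of characteristic $p$, and the like --- are exactly those where both can be arranged.
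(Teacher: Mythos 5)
This statement is labeled a conjecture in the paper and is not proved there; it remains open in the generality given, so there is no ``paper's own proof'' to compare against. You recognize this explicitly in your final paragraph, and what you have written is really an account of why the conjecture is plausible together with a sketch of how the known special cases are obtained, which is the appropriate thing to produce here.

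The one piece of argument the paper does spell out --- the reduction of uniform Brian\c con--Skoda to uniform Artin--Rees --- you reproduce faithfully: fix a nonzerodivisor $c$ that multiplies $\overline{I^n}$ into $I^{n-d+1}$ (the paper draws $c$ from tight-closure test elements; you offer the Lipman--Sathaye Jacobian ideal as an alternative, which is essentially the same device), apply a uniform Artin--Rees constant for the pair $(c)\subseteq R$ to pass from $(c)\cap I^{n-d+1}$ to $cI^{n-d-k+1}$, and cancel $c$. The rest of your scaffolding --- Noetherian induction on dimension, reduction to a complete local domain, the patching lemma across the conductor and the normalization, and the insistence on carrying the module form of Artin--Rees precisely because the induction produces non-reduced quotients --- is not in the paper at all, but it is an accurate description of the machinery in \cite{Hu}, and you correctly identify the obstruction (a uniform multiplier available in mixed characteristic, and patching that survives non-reduced quotients) as exactly what confines the theorem to the hypotheses (i)--(iii) listed. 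One small slip in the last paragraph: the paper's hypothesis (i) is ``essentially of finite type over an excellent Noetherian local ring,'' with no restriction on the dimension of the base; your phrase ``excellent ring of Krull dimension at most one'' appears to conflate that with hypothesis (iii), essentially of finite type over $\mathbb{Z}$.
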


With a little thought, it is easy to see that at least one cannot choose such a $k$ equal to the dimension of the
ambient ring. For example if the dimension is one, then the statement $\overline{I^n}\subseteq I^{n-1+1}$
forces powers of all ideals to be integrally closed. For a local one-dimensional ring, this in turn forces the
ring to be regular. On the other hand there will often be a uniform $k$ in this special case. For example,
let's suppose that $R$ is a one-dimensional complete local domain. Its integral closure will be a DVR, say V.
The integral closure of any ideal $J$ in $R$ is given by $JV\cap R$.  There is a conductor ideal which is
primary to the maximal ideal $\m$, so there is a fixed integer $k$ such that for every ideal $I$ of $R$,
$I^k$ is in the conductor. But then,
$$\overline{I^n} = I^nV\cap R\subset I^{n-k},$$
since $I^kV\subset R$.  Thinking about this analysis, it is not totally surprising that 
this question is closely connected with another uniform question dealing with the classical
lemma of Artin and Rees.

The usual Artin-Rees lemma states that if $R$ is Noetherian, $N\subseteq M$
are finitely generated $R$-modules, and $I$ is an ideal of $R$, then there
exists a $k > 0$ (depending on $I,M,N)$ such that for all $n > k$,
$ I^{n}M\cap N =  I^{n-k}(I^kM\cap N)$.
A weaker statement which is sometimes just as useful is that for all $n > k$,
$$
I^{n}M\cap N \subseteq  I^{n-k}N.
$$

How dependent upon $I$, $M$ and $N$ is the least such $k$?  It is very easy to see that $k$ fully depends upon both $N$ and $M$, so
the only uniformity that might occur is in varying the ideal $I$. The usual proof of the Artin-Rees lemma passes to the
module $\mathfrak M: = M\oplus IM\oplus I^2M\oplus ...$, which is finitely generated over the Rees algebra $R[It]$ of $I$. Since the Rees algebra is
Noetherian, every submodule of $\mathfrak M$ is finitely generated. Applying this fact to the submodule $\mathfrak N: = N\oplus IM\cap N\oplus I^2M\cap N\oplus ....$ of
$\mathfrak M$ 
then easily gives the Artin-Rees lemma. On the face of it, there is no way that the integer $k$ could be chosen uniformly, since it depends on
the degrees of the generators of the submodule $\mathfrak N$ over the Rees algebra of $I$. Nonetheless, one can still make a 
rather optimistic conjecture \cite{Hu}:

 \begin{conjecture}Let $R$ be an excellent Noetherian ring of
finite Krull dimension.  Let $N\subseteq M$ be two finitely
generated $R$-modules. There
exists an integer $k= k(N,M)$ such that for all ideals $I\subseteq
R$ and all $n\geq k$,
$$
I^{n}M\cap N\subseteq I^{n-k}N.
$$
\end{conjecture}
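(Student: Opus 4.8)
The plan is to reduce the statement to one concrete configuration and then bootstrap by an induction on dimension.

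\textbf{Step 1 (reductions).} Standard manipulations bring us to the case $M=R$ and $N=J$ an ideal. Given a surjection $\pi\colon F\twoheadrightarrow M$ from a finite free module with $N^{*}=\pi^{-1}(N)$, one has $\pi(I^{n}F\cap N^{*})=I^{n}M\cap N$ and $\pi(I^{n-k}N^{*})=I^{n-k}N$, so a uniform bound for $(N^{*},F)$ descends to $(N,M)$; writing $F=R\oplus F'$ and using the easy subadditivity of uniform Artin--Rees constants along $0\to N\cap F'\to N\to\overline{N}\to 0$, an induction on $\operatorname{rank}F$ completes this reduction. A nilpotent filtration then reduces to $R$ reduced: if $\operatorname{nil}(R)^{t}=0$, treat the layers $\operatorname{nil}(R)^{i}R$ (which are modules over $R/\operatorname{nil}(R)$) one at a time. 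The goal becomes: for $R$ reduced, excellent, of finite dimension, and $J\subseteq R$ an ideal, produce $k=k(J)$ with $I^{n}\cap J\subseteq I^{n-k}J$ for all ideals $I$ and all $n\ge k$.

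\textbf{Step 2 (two inputs).} (A) Regular rings of finite Krull dimension have uniform Artin--Rees: here I would invoke the Brian\c con--Skoda theorem, bounding $I^{m}\cap J_{0}$ inside an integral closure of the form $\overline{I^{m-c(J_{0})}J_{0}}$ (with $c(J_{0})$ governed by the number of generators of a reduction of $J_{0}$) and then applying the Brian\c con--Skoda containment $\overline{L^{m}}\subseteq L^{m-d+1}$ and its module-theoretic refinements. (B) Since $R$ is reduced and excellent, its regular locus is dense and open, so there is a nonzerodivisor $c\in R$ with $R_{c}$ regular; moreover, exactly as in Section~\ref{sec:redmodp} (reduction to characteristic $p$, together with test-element theory), a suitable power of $c$ may be taken to be a \emph{uniform Brian\c con--Skoda element}: $c\,\overline{L^{m}}\subseteq L^{m-e}$ for all ideals $L$ and all $m\ge e$, with $e=e(R)$.

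\textbf{Step 3 (induction on $\dim R$).} The ring $R/cR$ has dimension $<\dim R$ (as $c$, a nonzerodivisor in the reduced ring $R$, lies in no minimal prime), hence has uniform Artin--Rees by the inductive hypothesis together with Step~1; this controls everything modulo $c$. Applying Input~(A) to the regular ring $R_{c}$ and clearing denominators gives a bound $c^{a}(I^{n}\cap J)\subseteq I^{n-k_{1}}J$ valid for all $I$ and all $n\ge k_{1}$; this controls the $c$-divisible part. One then splices the two along $0\to R\xrightarrow{\,c\,}R\to R/cR\to 0$: for $z\in I^{n}\cap J$, reducing modulo $c$ and applying uniform Artin--Rees over $R/cR$ writes $z=z'+cw$ with $z'$ in a high power of $I$ times $J$ and $cw$ again lying in an intersection $cR\cap I^{?}\cap J$; the uniform Brian\c con--Skoda element together with the bound over $R_{c}$ pushes $cw$ back into a power of $I$ times $J$, and after finitely many such steps the process terminates with a uniform $k$.

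\textbf{The main obstacle} is exactly this splicing in Step~3, together with the uniformity of the denominator-clearing in Input~(A): one must exhibit a single power $c^{a}$, independent of $I$ and $n$, that clears all denominators — equivalently, that the $c$-torsion of $R/I^{n-k_{1}}J$ is annihilated by a bounded power of $c$ as $I$ and $n$ vary, itself a uniformity statement — and one must arrange the iteration so that the auxiliary constant does not degrade with each division by $c$. Controlling precisely how the Artin--Rees constant propagates through the finitely many layers of the $c$-adic filtration, with the strength of the uniform test element $c$ serving as the bookkeeping device, is the technical heart of the argument; everything else is classical or routine d\'evissage.
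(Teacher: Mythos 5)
The statement you are attempting to prove is a \emph{conjecture}, and the paper says so explicitly: immediately after stating it, the text reads ``In this generality, the conjecture is open.'' The paper offers no proof; it instead cites the theorem from \cite{Hu}, which establishes uniform Artin--Rees only under one of three stronger hypotheses on $S$: (i) essentially of finite type over a Noetherian local ring, (ii) reduced of characteristic $p$ with $S^{1/p}$ module-finite over $S$, or (iii) essentially of finite type over $\mathbb{Z}$. An arbitrary excellent Noetherian ring of finite Krull dimension satisfies none of these automatically, and that gap is exactly the content of the conjecture. Your outline reproduces, in broad architecture, the strategy of \cite{Hu} (reduction to $M=R$ and $N$ an ideal via the Rees-module argument, induction on dimension, Brian\c con--Skoda on a regular localization, and a uniform annihilator to splice across $0\to R\xrightarrow{c}R\to R/cR\to 0$), so it is not wrong as a sketch of the known cases --- but it cannot deliver the full statement.

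The concrete gap is your Input (B). You assert that ``exactly as in Section~\ref{sec:redmodp} (reduction to characteristic $p$, together with test-element theory), a suitable power of $c$ may be taken to be a uniform Brian\c con--Skoda element.'' Reduction to characteristic $p$ in the sense of Section~\ref{sec:redmodp} requires descending the data to a finitely generated $\mathbb{Z}$-algebra, which is available when $R$ is (essentially) of finite type over $\mathbb{Z}$ or over a field, but not for a general excellent Noetherian ring. Likewise, the existence of tight closure test elements --- the very uniform annihilators you need --- is a theorem only under finiteness hypotheses such as $F$-finiteness or finite type over an excellent local ring; for an arbitrary excellent ring it is not known, and asserting a ``uniform Brian\c con--Skoda element'' for such $R$ is essentially restating the open problem. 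Finally, you yourself flag the splicing and denominator-clearing in Step~3 as ``the technical heart'' and ``the main obstacle'' without resolving them; describing the remaining work as ``classical or routine d\'evissage'' is an overstatement --- propagating a single bound through the $c$-adic layers without degradation, uniformly in $I$ and $n$, is precisely what makes \cite{Hu} a substantial theorem under its hypotheses and the general conjecture still open.
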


In this generality, the conjecture is open. However, there is considerable literature giving lots of information about this conjecture and related problems.
See, for example, \cite{ Ab, DO, ELSV, GP, O, OP, P1, S, T}. 

It turns out that there is a very close relationship between these two conjectures, which is not at all apparent. One way to see such a connection is through
results related to tight closure theory. Suppose that $R$ is a $d$-dimensional local complete Noetherian ring of
characteristic $p$ which is reduced.  The so-called ``tight closure Brian\c con-Skoda theorem" \cite{HH1} states that
for every ideal $I$, $\overline{I^n}\subseteq (I^{n-d+1})^*,$ where $J^*$ denotes the tight closure of an ideal $J$. If $R$ is regular, every ideal
is tightly closed. The point here is that $R$ will have a non-zero test element $c$, not in any minimal prime. This means that
$c$ multiplies the tight closure of any ideal back into the ideal.  Such elements are {\it uniform} annihilators, and are one of the most important
features in the theory of tight closure. Suppose that there is a uniform Artin-Rees number $k$ for the
pair of $R$-modules, $(c)\subset R$. Then for every ideal $I$,
$$c \overline{I^n}\subseteq c(I^{n-d+1})^*\subset (c)\cap I^{n-d+1}\subset cI^{n-d-k+1}.$$
Since $c$ is not in any minimal prime and $R$ is reduced, it follows that $c$ is a non-zerodivisor. We can cancel it to obtain that
$$\overline{I^n}\subseteq I^{n-d-k+1}.$$ Thus in this case, uniform Artin-Rees implies uniform Brian\c con-Skoda. In fact these
conjectures are more or less equivalent. 

Both conjectures were proved in fairly great generality in \cite{Hu}:

\begin{theorem} (Uniform Artin-Rees) Let $S$ be a Noetherian
ring. Let $N\subseteq M$ be two finitely generated $S$-modules. If $S$
satisfies any of the conditions below,  then there
exists an integer $k$ such that for all ideals $I$ of $S$, and for all
$n\geq k$
$$
I^nM\cap N\subseteq I^{n-k}N.
$$

i) $S$ is essentially of finite type over a Noetherian local ring.

ii) $S$ is a reduced ring of characteristic $p$, 
and $S^{1/p}$ is
module-finite over $S$.

iii) $S$ is essentially of finite type over ${\bb Z}$.
\end{theorem}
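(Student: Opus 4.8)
The plan is to split the theorem into two pieces: a purely homological input --- the existence of a single element $c$ that ``uniformly annihilates cohomology'' --- and a geometric induction on $\Spec(S)$ that extracts the Artin--Rees bound from it, with the hypotheses (i)--(iii) entering only through the construction of $c$. Call $c\in S$ a \emph{uniform annihilator} if $c$ lies in no minimal prime of $S$ and, for every finitely generated $S$-module $M$, there is an integer $a=a(M)$ such that $c^{a}$ annihilates $H^i_{\p S_\p}(M_\p)$ for every prime $\p$ and every $i<\dim M_\p$ (equivalently, $c^{a}$ kills the Koszul-homological obstruction to the free resolution of $M$ splitting past step $\dim S$). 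The argument then has three parts: (1) reduce the statement to a convenient special form; (2) prove ``$S$ has a uniform annihilator $\Rightarrow$ $S$ has uniform Artin--Rees''; (3) produce a uniform annihilator under each of (i)--(iii). Parts (1) and (2) are the same in all three cases; the entire case distinction is confined to part (3).

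For part (1): nilpotents of $S$ can be absorbed into the Artin--Rees constant, so one may assume $S$ is reduced; embedding $S\hookrightarrow\bigoplus_j S/\p_j$ over the minimal primes, the cokernel is a finitely generated module killed by the conductor, which contains a nonzerodivisor, and one application of ordinary Artin--Rees to that conductor reduces to the case that $S$ is a domain; finally, replacing $M$ by a free cover, $N$ by its preimage, and peeling off split summands of the free module, it suffices to treat pairs $\mathfrak a\subseteq S$ with $\mathfrak a$ an ideal. So the goal becomes: for each ideal $\mathfrak a$ of a domain $S$ of one of the three types, find $k$ with $I^n\cap\mathfrak a\subseteq I^{n-k}\mathfrak a$ for all ideals $I$ and all $n\geq k$. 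For part (2): run a Noetherian induction, so that the conclusion is known for every proper quotient of $S$, in particular for $S/cS$, which has smaller Krull dimension. The defining property of $c$ gives a single power $c^{a}$, with $a$ depending only on $\mathfrak a$, annihilating the lower local cohomology (equivalently, the relevant Koszul homology) of $S/\mathfrak a$ everywhere; this is precisely the input that drives a Brian\c con--Skoda/Lipman--Sathaye-style estimate and yields $I^n\cap\mathfrak a\subseteq I^{n-k'}\mathfrak a$ away from $V(c)$, i.e. up to multiplication by $c^{a}$. On $V(c)=\Spec(S/cS)$ the inductive hypothesis controls the image of $\mathfrak a$. Because $S$ is a domain, $c$ is a nonzerodivisor and $0:_S c^\infty=0$, so ordinary Artin--Rees for this (trivial) pair lets the ``off-$V(c)$'' estimate be transferred back to $S$; interlacing it with the inductive constant on $V(c)$ through several nested uses of the classical Artin--Rees lemma produces one $k$ that works for all $I$.

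For part (3), each hypothesis feeds the construction of $c$ by a different route. Case (ii), $S$ reduced and $F$-finite of characteristic $p$: $F$-finiteness makes $S$ excellent, and tight closure theory then supplies a test element for $S$, namely a nonzerodivisor $c$ with $S_c$ regular that is exactly a uniform cohomological annihilator. Case (iii), $S$ finite type over $\bb{Z}$: spread out over $\Spec\bb{Z}$ --- for all but finitely many primes $p$ the fiber $S\otimes_{\bb{Z}}\bb{F}_p$ is reduced and $F$-finite, so case (ii) applies, while the generic fiber $S\otimes_{\bb{Z}}\bb{Q}$ is finite type over a field, hence excellent, and is handled by a characteristic-zero argument; patching the annihilators over the finitely many bad primes and the generic point gives one over $S$. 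Case (i), $S$ essentially of finite type over a Noetherian local ring $(R,\mathfrak n)$: here $R$, and hence $S$, need not be excellent, so the regular locus of $S$ can be pathological; one circumvents this by passing to the completion $\widehat R$, which is excellent, building the uniform annihilator for $S\otimes_R\widehat R$ there, and descending it along the faithfully flat map $R\to\widehat R$.

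The step I expect to be the main obstacle is the gluing inside part (2): the intersection $I^n\cap\mathfrak a$ does not split along $\Spec(S)=D(c)\cup V(c)$, so the ``off-$V(c)$'' constant, the inductive constant coming from $S/cS$, and the exponent $a$ of $c$ have to be braided together by repeated applications of the ordinary Artin--Rees lemma, and one must verify at each stage that the resulting bound is still independent of $I$. A close second is case (i): the failure of excellence over an arbitrary Noetherian local base is exactly the phenomenon that blocks a direct argument, and it forces the passage to $\widehat R$ together with a nontrivial descent of the annihilator.
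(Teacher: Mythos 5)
Be aware that the paper you are reading does not prove this theorem: it is a survey article, and it states the Uniform Artin-Rees theorem with a citation to \cite{Hu}, where the actual proof appears. There is therefore no proof in the present text against which to compare your attempt.

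Measured against the argument in \cite{Hu}, your general plan --- reduce to domains, find one nonzerodivisor $c$ outside the minimal primes acting as a uniform cohomological annihilator, and run a Noetherian induction over $V(c)$, with the three hypotheses entering only to produce $c$ --- is in the right spirit. But several steps need more care than you have given them. The reduction from an arbitrary pair $N\subseteq M$ to the case $M=S$, $N$ an ideal, is not a matter of ``peeling off split summands''; it requires a genuine induction on the rank of a free cover, with the Artin-Rees constant tracked at each stage. Your formulation of ``uniform annihilator'' via $c^{a}$ killing $H^i_{\p S_\p}(M_\p)$ for $i<\dim M_\p$ is loose, and the parenthetical ``equivalently'' glossing it as a Koszul statement is not correct as written, since the vanishing range of local cohomology is governed by depth rather than dimension; the working notion in \cite{Hu} is phrased directly in terms of uniform annihilation of Koszul homology and is tied to the singular locus. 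Most seriously, the ``braiding'' you correctly identify as the main obstacle is not accomplished by ``repeated applications of the classical Artin-Rees lemma'': applied naively, classical Artin-Rees reintroduces dependence on $I$, and showing the constants can be made uniform is the entire point of the theorem; your outline does not supply that argument. Finally, in case (i), building the annihilator over $S\otimes_R\widehat R$ does not ``descend'' it to $S$ --- the element need not lie in $S$ --- so what must actually be transferred by faithful flatness is the Artin-Rees conclusion, not the element $c$.
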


\noindent We also have the following:

\begin{theorem} (Uniform Brian\c con-Skoda) Let $S$ be a Noetherian
reduced ring. If $S$ satisfies any of the following conditions, then there exists a positive integer $k$ such that for all
ideals $I$ of $S$, and for all $n\geq k$,
\[\overline{I^n}\subseteq I^{n-k}.\]

i) $S$ is essentially of finite type over an excellent Noetherian local ring.

ii) $S$ is of characteristic $p$, 
and $S^{1/p}$ is
module-finite over $S$.

iii) $S$ is essentially of finite type over ${\mathbb Z}$.
\end{theorem}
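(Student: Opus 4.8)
The plan is to deduce the theorem from two ingredients already in hand: the Uniform Artin--Rees theorem stated just above, and the Brian\c con--Skoda theorem for regular local rings (in its sharper forms, via the Jacobian ideal of Lipman--Sathaye and, in positive characteristic, via tight closure). The first step is a reduction: it suffices to produce a \emph{single} nonzerodivisor $c\in S$ and an integer $h$ such that
\[c\cdot\overline{I^{n}}\subseteq I^{n-h}\qquad\text{for every ideal }I\subseteq S\text{ and every }n\geq h.\]
Granting this, apply the Uniform Artin--Rees theorem to the pair of $S$-modules $(c)\subseteq S$: there is an integer $k_{0}$ with $(c)\cap I^{m}\subseteq c\,I^{m-k_{0}}$ for all $I$ and all $m\geq k_{0}$. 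Then $c\cdot\overline{I^{n}}\subseteq(c)\cap I^{n-h}\subseteq c\,I^{n-h-k_{0}}$, and since $S$ is reduced and $c$ avoids every minimal prime, $c$ is a nonzerodivisor and may be cancelled, giving $\overline{I^{n}}\subseteq I^{n-h-k_{0}}$ for $n\gg 0$; absorbing the finitely many small $n$ into the constant yields the theorem with $k:=h+k_{0}$ (enlarged if needed).

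Next I would reduce the ``uniform multiplier'' statement above to the case that $S$ is a domain. Let $\mathfrak{p}_{1},\dots,\mathfrak{p}_{r}$ be the minimal primes of the reduced ring $S$; then $S$ sits as an $S$-submodule of the module-finite extension $W=\prod_{i}S/\mathfrak{p}_{i}$, and for any ideal $I$ the image of $\overline{I^{n}}$ in $S/\mathfrak{p}_{i}$ lies in $\overline{I^{n}(S/\mathfrak{p}_{i})}$. A multiplier $c_{i}$ (with slack $h_{i}$) for each domain $S/\mathfrak{p}_{i}$ can be spliced, by a standard prime-avoidance argument, into an element $c\in S$ outside every minimal prime with $c\cdot\overline{I^{n}}\subseteq\bigcap_{i}(I^{n-h}+\mathfrak{p}_{i})$ for $h=\max_{i}h_{i}$; and applying the Uniform Artin--Rees theorem once more --- to the $S$-module pair $S\subseteq W$, for which $\bigcap_{i}(I^{m}+\mathfrak{p}_{i})=I^{m}W\cap S$ --- gives $\bigcap_{i}(I^{n-h}+\mathfrak{p}_{i})\subseteq I^{n-h-k_{1}}$. (One checks that hypotheses i)--iii) pass to the $S/\mathfrak{p}_{i}$ and to $W$.)

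For $S$ a domain, the multiplier comes from a Jacobian element. In positive characteristic (case ii)) this is quickest: the hypothesis that $S^{1/p}$ is module-finite over $S$ supplies a uniform test element $c$ (a nonzerodivisor, as $S$ is reduced and $c$ lies in no minimal prime), and the tight closure Brian\c con--Skoda theorem $\overline{I^{n}}\subseteq(I^{\,n-\dim S+1})^{*}$ gives $c\cdot\overline{I^{n}}\subseteq I^{\,n-\dim S+1}$ directly. In general, pass to the normalization $\overline{S}$, which is module-finite over $S$ under each of the stated hypotheses, pick $0\neq c_{1}\in S$ in the conductor so that $c_{1}\overline{S}\subseteq S$, and note $\overline{I^{n}}\subseteq\overline{(I\overline{S})^{n}}\cap S$; after a harmless residue-field extension the ideal $I\overline{S}$ has the same integral closure as an ideal $\mathfrak{a}$ generated by at most $\dim S+1$ elements, so $\overline{(I\overline{S})^{n}}=\overline{\mathfrak{a}^{n}}$, and the Lipman--Sathaye Jacobian theorem --- applied to a presentation of $\overline{S}$ as a generically smooth module-finite extension of a regular subring (Noether normalization over the base field, over $\mathbb{Z}$, or over a complete regular local ring after completion) --- produces a nonzero $c_{2}\in\overline{S}$ with $c_{2}\cdot\overline{\mathfrak{a}^{n}}\subseteq\mathfrak{a}^{\,n-\dim S}$. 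Then $c:=c_{1}c_{2}$ works, using $c_{1}\overline{S}\subseteq S$ to return to $S$. For equal- or mixed-characteristic zero one may alternatively reduce to positive characteristic, spreading $S$ and the putative multiplier over a finitely generated $\mathbb{Z}$-subalgebra and specializing as in the proof of Theorem~\ref{thm:Achar0}.

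I expect the uniform multiplier in characteristic zero (and mixed characteristic) to be the main obstacle: the Brian\c con--Skoda bound for regular rings does not pass to a singular $S$ by itself, and the conductor only controls the gap between $\overline{I^{n}}$ and $\overline{(I\overline{S})^{n}}\cap S$, leaving the real work --- bounding integral closures of ideals inside the normal but not regular ring $\overline{S}$ --- to the Lipman--Sathaye Jacobian theorem. Everything else is bookkeeping with the Uniform Artin--Rees theorem and with minimal primes.
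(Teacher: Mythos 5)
The paper does not actually prove this theorem; it only sketches, for a complete reduced local ring of characteristic $p$ with a tight-closure test element $c$, how the Uniform Artin--Rees theorem for the pair $(c)\subset R$ converts the tight-closure Brian\c con--Skoda containment $\overline{I^n}\subseteq (I^{n-d+1})^*$ into $\overline{I^n}\subseteq I^{n-d-k+1}$ by cancelling $c$, and then cites \cite{Hu} for the general result. Your opening reduction --- produce one nonzerodivisor $c$ and a slack $h$ with $c\cdot\overline{I^n}\subseteq I^{n-h}$, then cancel $c$ via Uniform Artin--Rees for $(c)\subset S$ --- is exactly this sketch, and your characteristic-$p$ domain case via test elements reproduces it faithfully. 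Your reduction to the domain case (splicing multipliers over the minimal primes via prime avoidance and one more application of Uniform Artin--Rees to $S\subseteq\prod_i S/\mathfrak p_i$) and your characteristic-zero argument (conductor of the normalization plus the Lipman--Sathaye Jacobian theorem, with an optional spreading-out to positive characteristic) are not in the paper, but they are the ingredients of Huneke's original proof in \cite{Hu}, which the paper refers to. A few places are stated loosely and would need care in a full write-up: the version of the Lipman--Sathaye Jacobian theorem you invoke (giving $c_2\cdot\overline{\mathfrak a^{\,n}}\subseteq\mathfrak a^{\,n-\dim S}$ inside the normal but not regular $\overline S$) should be quoted precisely, and the replacement of ``number of generators'' by ``$\dim S$'' in the Brian\c con--Skoda bound requires passing to an infinite-residue-field extension locally before patching; but these are the same technicalities that \cite{Hu} handles, and they do not affect the soundness of the strategy.
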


\subsection*{Exercises:} \ 

\bigskip

\begin{enumerate}

\item
If a Noetherian ring $R$ has the uniform Artin-Rees property, show that
$R/J$ (for any ideal $J \subseteq R)$ also has the uniform Artin-Rees property.
\medskip
\item
If a Noetherian ring $R$ has the uniform Artin-Rees property and $W$ is
any multiplicatively closed subset of $R$, show that the localization $R_W$ also has the
uniform Artin-Rees property.
\medskip
\item
Suppose that a Noetherian ring $R$ has the uniform Artin-Rees property.
Given a finitely generated $R$-module $M$, and an integer $i \geq 1$, 
 show that there exists an
integer $k \geq 1$ such that for all ideals $I$ of $R$ and all $n$,
$$
I^{k}\operatorname{Tor}^R_i(R/I^n,M) = 0.
$$
\medskip
\item
Let $R = k[[x,y]]$, $k$ a field.  Set $I = (x^n,y^n,x^{n-1}y)$,
$J = (x^n,y^n)$.  Prove that if $k<n$ then $I^\ell \not= J^{\ell-k}I^k$
for some $\ell \geq k+1$.
\medskip
\item Let $R$ be a Noetherian domain which satisfies the uniform Artin-Rees
theorem for every pair of finitely generated modules $N\subset M$. Let $f$ be a
non-zero element of $R$. Prove that there exists an integer $k$ such that for every
maximal ideal $\m$ of $R$, $f\notin \m^k$.

\medskip

\end{enumerate}

\section{Symbolic powers}\label{sec:symbolic}

In this section, $S$ will denote either a polynomial ring $k[x_1,\cdots,x_n]$ over some field $k$, or a regular local ring. The guiding problem will be the comparison between regular and symbolic powers of ideals in $S$. From a uniform perspective, we would like to understand whether the equality between small regular and symbolic powers guarantees the equality of all regular and symbolic powers. As we'll see, this is a very difficult question, but it gives rise to many interesting variations. We begin with a discussion of multiplicities, which will motivate the introduction of symbolic powers.

Recall from Section~\ref{sec:introduction} that if $S$ is a polynomial ring, $I$ is a homogeneous ideal, and $R=S/I$, then $\dim_k(R_i)=h_R(i)$ is a polynomial function for sufficiently large values of $i$, and the multiplicity $e(R)$ is defined by the property that
\[h_R(i)=\frac{e(R)}{(\dim R-1)!}i^{\dim R-1}+\textrm{(lower order terms)}.\]
We can define the multiplicity of a local ring $(R,\m)$ by letting $e(R)=e(\gr_{\m}(R))$, where 
\[\gr_{\m}(R)=R/\m\oplus\m/\m^2\oplus\m^2/\m^3\oplus\cdots=\bigoplus_{i\geq 0}\m^i/\m^{i+1}\]
is the \defi{associated graded ring} of $R$ with respect to $\m$. Let's look at some examples of multiplicities:

\begin{example}\label{ex:multiplicity}
\begin{enumerate}[(a)]
 \item If $S=k[x_1,\cdots,x_n]$ and $f$ is a form of degree $d$ then $e(S/(f))=d$. If $S$ is a regular local ring, then
\[e(S/(f))=\ord(f):=\max\{n:f\in\m^n\}.\]
 
 \item If $S=k[x_1,\cdots,x_n]$, and $S/I$ is Cohen-Macaulay, having a \defi{pure} resolution
\[0\lra S(-d_c)^{\b_c}\lra\cdots\lra S(-d_2)^{\b_2}\lra S(-d_1)^{\b_1}\lra S\lra S/I\lra 0,\]
where $c=\codim(I)$, then
\[e(S/I)=\frac{\prod_{j=1}^{c}d_j}{c!}.\]
 
 \item If $X\subset\bb{P}^n$ is a set consisting of $r$ points, and if we write $R_X$ for the homogeneous coordinate ring of $X$, then 
\[e(R_X)=r.\]
Note that for a projective variety $X$, $e(R_X)$ is also called the \defi{degree} of $X$.
 
 \item If $X=\bb{G}(2,n)$ is the Grassmannian of $2$-planes in $n$-space, in its Pl\"ucker embedding, then its degree is (see for example \cite{mukai})
\[e(R_X)=\frac{1}{n-1}{2n-4\choose n-2},\]
the $(n-2)$-nd \defi{Catalan number}.
\end{enumerate}
\end{example}

The following is a natural question when studying multiplicity:

\begin{question}\label{que:multflat}
 How does the multiplicity behave under flat maps?
\end{question}

We are interested in two types of flat maps:

\begin{enumerate}
 \item[I.] A local flat ring homomorphism $(R,\m)\to(R',\m')$.

 \item[II.] A localization map $R\to R_{\p}$, where $\p$ is a prime ideal.
\end{enumerate}

For flat maps of type I, the behavior of multiplicity is the subject of an old conjecture of Lech:

\begin{conjecture}[{\cite{lech}}]\label{conj:lech}
 If $(R,\m)\to(R',\m')$ is a local flat homomorphism then \[e(R)\leq e(R').\]
\end{conjecture}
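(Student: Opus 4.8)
\medskip
\noindent\textbf{Strategy (the conjecture is open in general).} Essentially every inequality that flatness gives us about $e(R')$ points the wrong way, so a genuine attack must import extra structure; what follows is the natural line, with the point where it stalls. We first make the standard reductions: completion changes neither side nor the flat-local hypothesis, so assume $R,R'$ complete; replacing $R$ by $R[x]_{\m R[x]}$ and $R'$ by a parallel flat extension — both faithfully flat and, by the baby case below ($\m R'=\m'$ forces $e(R)=e(R')$), leaving the multiplicities unchanged — assume the residue fields are infinite, so that minimal reductions and superficial elements exist; and finally, a standard (if slightly delicate) argument via the associativity formula $e(R)=\sum_{\p}\ell_{R_\p}(R_\p)\cdot e(R/\p)$ over the minimal primes of maximal dimension, together with going-down for flat maps, reduces us to the case where $R$ is a complete local domain. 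Write $d=\dim R\ge 1$, $d'=\dim R'$, $F=R'/\m R'$ for the closed fibre, $s=d'-d=\dim F$.

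Here is the baby case and the induction it suggests. If $\m R'=\m'$, then a minimal reduction $\underline x$ of $\m$ satisfies $\m^{k+1}=(\underline x)\m^k$, which tensors up (flatness) to $(\m')^{k+1}=(\underline x)(\m')^{k}$; thus $(\underline x)R'$ reduces $\m'$ and $e(R')=e((\underline x)R',R')=e((\underline x),R)=e(R)$. In general one would induct on $s$ by picking $y\in\m'$ that is at once: (i) a nonzerodivisor on $F$ inside a system of parameters of $F$, so that $R\to R'/yR'$ is again flat local with closed fibre of dimension $s-1$; and (ii) a superficial element for $\m'$ with $\dim R'/yR'=d'-1$, so that $e(R'/yR')=e(R')$. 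Induction would then give $e(R)\le e(R'/yR')=e(R')$. Both (i) and (ii) are general-position conditions, hence met by a general $y$ over an infinite residue field — \emph{unless} a forbidden prime is forced, for instance when $\m'$ is an embedded associated prime of $F$, in which case no $y$ is a nonzerodivisor modulo $\m R'$ while still cutting $\dim F$. The true obstruction is condition (ii): ``superficial implies multiplicity preserved'' is trivial when $R'$ is Cohen--Macaulay but has no substitute otherwise, and indeed every bound one gets for free from flatness — such as $e(R')=e(\m',R')\le e(\m R'+(\underline y)R',R')=e(R)\cdot e((\bar{\underline y}),F)$ with $\underline y$ lifting a system of parameters of $F$, which only ever bounds $e(R')$ from above — runs the wrong way.

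So I would abandon the ring and work with modules. Suppose the complete local domain $R$ carries a maximal Cohen--Macaulay module $M$ that is \emph{Ulrich}: $\mu_R(M)=e(M)=e(R)\cdot\operatorname{rank}_R M$ (the last equality being the associativity formula over the domain $R$). Suppose for the moment that $F$ is Cohen--Macaulay. Then $N:=M\oo_R R'$ is maximal Cohen--Macaulay over $R'$, since depth is additive across a flat map with Cohen--Macaulay closed fibre, and for any maximal Cohen--Macaulay $R'$-module $N$ and any minimal reduction $\underline x$ of $\m'$ one has
\[
\mu_{R'}(N)\ \le\ \ell_{R'}(N/\underline x N)\ =\ e((\underline x),N)\ =\ e(N)\ =\ e(R')\cdot\operatorname{rank}_{R'}N,
\]
the steps being: the surjection $N/\underline x N\twoheadrightarrow N/\m'N$; then $N$ Cohen--Macaulay with $\underline x$ a parameter ideal; then $\underline x$ a reduction of $\m'$; then the associativity formula over the domain $R'$. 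Since $\mu$ and rank are preserved by the flat base change ($\mu_{R'}(N)=\mu_R(M)$ and $\operatorname{rank}_{R'}N=\operatorname{rank}_R M>0$), this reads
\[
e(R)\cdot\operatorname{rank}_R M\ =\ \mu_R(M)\ =\ \mu_{R'}(N)\ \le\ e(R')\cdot\operatorname{rank}_R M,
\]
and cancelling the rank gives $e(R)\le e(R')$.

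Two gaps remain, and they carry the whole difficulty. First, an arbitrary complete local domain is not known to possess an Ulrich — or even a maximal Cohen--Macaulay — module. Second, $F$ need not be Cohen--Macaulay, so $N=M\oo_R R'$ need not be maximal Cohen--Macaulay and the first display collapses. The remedy for both is to weaken ``Ulrich module'' to a \emph{lim Ulrich sequence} of $R$-modules (or of coherent sheaves on $\Spec R$) of eventually positive rank whose normalized number of generators converges to the multiplicity, carrying no Cohen--Macaulay hypothesis, and to run an asymptotic version of the previous paragraph in which the non-Cohen--Macaulay error terms wash out in the limit; a suitable big Cohen--Macaulay algebra can be made to serve the same purpose. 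Lech himself handled $\dim R\le 1$ directly, and this framework recovers the other known cases (small dimension, equal characteristic). But constructing such a sequence, or such an algebra, over an arbitrary complete local ring is the step I expect to bear essentially the entire weight of a proof — and it is precisely what remains open.
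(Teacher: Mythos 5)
The statement you were given is labeled \emph{Conjecture} in the paper, and the paper offers no proof: it explicitly remarks that ``very little progress has been made on this conjecture'' since Lech posed it. There is therefore no ``paper's own proof'' to compare against, and you were right not to manufacture one. Your write-up correctly treats the statement as open and is, as mathematics, a sound survey of the standard line of attack and where it breaks: the reductions (completion, infinite residue field via $R[x]_{\m R[x]}$, reduction to a complete domain by the associativity formula and going-down) are all legitimate; the baby case $\m R'=\m'$ is handled correctly; the diagnosis that a superficial-element induction stalls because ``superficial preserves multiplicity'' requires Cohen--Macaulayness, and that the cheap inequality $e(R')\le e(R)\cdot e(F)$ runs the wrong way, is exactly right; and the Ulrich-module argument you sketch (flat base change with Cohen--Macaulay fibre, $\mu(N)\le\ell(N/\underline xN)=e(N)=e(R')\operatorname{rank}N$, cancel the rank) is the correct mechanism and is how the conjecture is actually proved in the cases where it is known. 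Your final paragraph accurately locates the real difficulty — existence of Ulrich or maximal Cohen--Macaulay modules, and the non-Cohen--Macaulay fibre — and the lim Ulrich / weakly lim Ulrich sequence remedy you name is precisely the framework Ma later used to settle the equal-characteristic case. One small caution: in the reduction to the domain case via the associativity formula, flatness gives you going-down but you also need to match minimal primes of maximal dimension on both sides and control the lengths $\ell_{R_\p}(R_\p)$ under base change; this step is ``standard but slightly delicate,'' as you say, and it is worth writing out if you ever expand this into a real argument. As it stands, your proposal is not a proof and does not claim to be, which is the correct posture for a conjecture the paper itself leaves open.
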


It is amazing that very little progress has been made on this conjecture, although it is  easy to state, and was made about 50 years ago!
To paraphrase a famous line of Mel Hochster, it is somewhat of an insult to our field that we cannot answer this conjecture, one way or the
other.

For flat maps of type II, if $S$ is a regular local ring and $R=S/(f)$, we have (see \cite[(38.3)]{nagata-lrings}, \cite{zariski} and Exercise~\ref{exer:multisord})
\[e(R_{\p})=\max\{n:f\in\p^nS_{\p}\cap S\}\leq\max\{n:f\in\m^n\}=e(R).\]
If we denote by $\p^{(n)}$ the intersection $\p^n S_{\p}\cap S$, also called the $n$-th \defi{symbolic power} of $\p$, then the above inequality is equivalent to the containment
\begin{equation}\label{eq:p(n)inmn}
\p^{(n)}\subseteq\m^n. 
\end{equation}

For a ring $R$, we write $\Spec(S)$ (resp. $\Max(S)$) for the collection of its prime (resp. maximal) ideals. In general, if $S=k[x_1,\cdots,x_n]$ is a polynomial ring and $\p\in\Spec(S)$, then by the Nullstellensatz \cite[Thm.~4.19]{eisCA},
\[\p=\bigcap_{\substack{\m\in\Max(S)\\ \p\subset\m}}\m.\]
The symbolic powers of $\p$ can then be described (see \cite{eis-hoc}) as
\begin{equation}\label{eq:vanishordn}
\p^{(n)}=\bigcap_{\substack{\m\in\Max(S)\\ \p\subset\m}}\m^n,
\end{equation}
generalizing the inclusion (\ref{eq:p(n)inmn}). If we think of $\p$ as defining an affine variety $X$, then (\ref{eq:vanishordn}) characterizes $\p^{(n)}$ as the polynomial functions that vanish to order $n$ at the points of $X$. Symbolic powers make sense in a more general context. If $I=\p_1\cap\cdots\cap\p_r$ is an intersection of prime ideals, then
\begin{equation}\label{eq:symbpowers}
I^{(n)}=\p_1^{(n)}\cap\cdots\cap\p_r^{(n)}.
\end{equation}
For an arbitrary ideal $I$, see \cite[Def.~8.1.1]{primer-seshadri}.

One of the main questions regarding symbolic powers is the following:

\begin{question}[Regular versus symbolic powers]\label{que:powers}
 How do $I^n$ and $I^{(n)}$ compare? In particular, when are they equal for all $n$?
\end{question}

\begin{example}\label{ex:symbpowers}
\begin{enumerate}[(a)]
 \item If $I$ is a \defi{complete intersection ideal}, i.e. if it is generated by a regular sequence, then $I^n=I^{(n)}$ for all $n\geq 1$ (see Exercise~\ref{exer:ciprime} for the case when $I$ is a prime ideal).
 
 \item Let $X$ denote a generic $n\times n$ matrix with $n\geq 3$, let $\Delta=\det(X)$ and $I=I_{n-1}(X)$, the ideal of $(n-1)\times(n-1)$ minors of $X$. We have on one hand that the adjoint matrix $\adj(X)$ has entries in $I$, so $\det(\adj(X))\in I^n$, and on the other hand $\adj(X)\cdot X=\Delta\cdot\bb{I}_n$ (where $\bb{I}_n$ denotes the $n\times n$ identity matrix), so $\det(\adj(X))=\Delta^{n-1}$. We get $\Delta^{n-1}\in I^n$, from which it can be shown that $\Delta\in I^{(2)}\setminus I^2$ (see Exercise~\ref{exer:symbvsregpower}).

 \item If $S=k[x,y,z]$, $\p\in\Spec(S)$ with $\dim(S/\p)=1$, then the following are equivalent:
 \begin{itemize}
  \item $\p^{(n)}=\p^n$ for all $n\geq 1$.
  \item $\p^{(2)}=\p^2$.
  \item $\p$ is locally a complete intersection.
 \end{itemize}
\end{enumerate}
\end{example}

The following conjecture of H\"ubl is related to Example~\ref{ex:symbpowers}(b), and in particular it is true in the said example:
\begin{conjecture}[{\cite[Conj.~1.3]{hubl}}]\label{conj:hubl}
 If $R$ is a regular local ring, $\p\in\Spec(R)$ and $f\in R$, with the property that $f^{n-1}\in\p^n$, then $f\in\m\cdot\p$.
\end{conjecture}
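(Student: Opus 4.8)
\emph{Proof proposal.} The plan is to strip off a couple of routine reductions unconditionally, reducing the statement to a symbolic-power containment, and then to finish by invoking the Eisenbud--Mazur conjecture; thus the argument will be unconditional in several special cases but only conditional in full generality, since that last input is itself open (and is known to fail in positive characteristic).

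First I would dispose of the trivial cases. The hypothesis $f^{n-1}\in\p^n$ forces $n\geq 2$ (for $n=1$ it reads $1\in\p$), and then $f^{n-1}\in\p^n\subseteq\p$ together with the primeness of $\p$ gives $f\in\p$; moreover, if $f\in\p^2$ we are already done, since $\p^2=\p\cdot\p\subseteq\m\cdot\p$. So we may assume $f\in\p\setminus\p^2$, and the goal becomes to show that the nonzero class $f^{*}$ of $f$ in $\p/\p^2=[\gr_{\p}(R)]_1$ actually lies in the subspace $\m\p/\p^2$.

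The one genuinely useful unconditional step is to pass to $R_\p$. This is a regular local ring, so its associated graded ring $\gr_{\p R_\p}(R_\p)$ is a polynomial ring over the residue field $R_\p/\p R_\p$, hence a domain, and therefore $\ord_{\p R_\p}$ is a valuation on $R_\p$. From the localized hypothesis $f^{n-1}\in\p^n R_\p$ we get $(n-1)\cdot\ord_{\p R_\p}(f)\geq n$, and since $n\geq 2$ this forces $\ord_{\p R_\p}(f)\geq 2$, i.e.\ $f\in\p^2 R_\p\cap R=\p^{(2)}$. (Equivalently: the hypothesis says precisely that the linear form $f^{*}$ satisfies $(f^{*})^{n-1}=0$ in $\gr_{\p}(R)$, so $f^{*}$ is a nonzero nilpotent linear form, and the localization argument extracts from this the concrete global consequence $f\in\p^{(2)}$.) At this point the conjecture for $(R,\p,f)$ follows from the containment $\p^{(2)}\subseteq\m\p$, which I would quote to finish.

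The hard part is exactly this last containment, and I do not expect to remove it. The inclusion $\p^{(2)}\subseteq\m\p$ is the Eisenbud--Mazur conjecture; it holds trivially when $\p$ is generated by a regular sequence (there $\p^{(2)}=\p^2$, so H\"ubl's conjecture is unconditional in that case), it is known in a number of further equicharacteristic-zero situations, and it can fail in characteristic $p>0$. So any proof of H\"ubl's conjecture for an arbitrary regular local ring must use the full hypothesis $f^{n-1}\in\p^n$ and not merely its consequence $f\in\p^{(2)}$: in characteristic $p$ the natural attempt would be to exploit that $f^{(n-1)p^e}\in\p^{np^e}$ for all $e$ together with the flatness of Frobenius and Brian\c con--Skoda-type estimates (cf.\ Theorems~\ref{thm:idealmembership} and~\ref{thm:A}), but I do not see how to convert this into the membership $f\in\m\p$. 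As a sanity check, the assertion is confirmed directly in Example~\ref{ex:symbpowers}(b): there $f=\det(X)\in\m\cdot I_{n-1}(X)$ by cofactor expansion, even though $f\notin I_{n-1}(X)^2$, exactly as the conjecture predicts.
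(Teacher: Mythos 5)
Since this is stated as a conjecture, the paper offers no proof; what it does offer is exactly the reduction you carried out. The discussion following Conjecture~\ref{conj:hubl} records that the hypothesis $f^{n-1}\in\p^n$ forces $f\in\p^{(2)}$ (via Exercise~\ref{exer:symbvsregpower}, i.e.\ the nilpotence of the leading form $f^*$ in $\gr_\p(R)$ extracted by localizing at $\p$), and that the remaining containment $\p^{(2)}\subseteq\m\p$ is the Eisenbud--Mazur conjecture, to which H\"ubl's conjecture is equivalent in characteristic zero. Your argument is the same one, correctly executed (the order computation $\bigl(n-1\bigr)\ord_{\p R_\p}(f)\geq n\Rightarrow\ord_{\p R_\p}(f)\geq 2$ is fine, the complete-intersection case and the determinantal sanity check are both right), and you are appropriately explicit that the last step is conditional and that in characteristic $p$ one would have to use more than the consequence $f\in\p^{(2)}$.
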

\noindent As a consequence of Exercise~\ref{exer:symbvsregpower}, it follows under the assumptions of the conjecture that $f\in\p^{(2)}$, but the conclusion $f\in\m\cdot\p$ turns out to be significantly harder. In characteristic $0$, Conjecture~\ref{conj:hubl} is equivalent to the Eisenbud-Mazur conjecture on evolutions (see Exercise~\ref{exer:EisenbudMazur} and \cites{eis-maz,boocher}).

A natural uniformity problem is to determine if it is enough to test the equality in Question~\ref{que:powers} for finitely many values of $n$, and moreover to determine a uniform bound for these values. A precise version of this is the following:

\begin{question}\label{que:symbuniform}
 Assume that $S$ is a regular local ring, or a polynomial ring, and that $\p\in\Spec(S)$. If $\p^{(\dim S)}=\p^{\dim S}$, does it follow that $\p^{(n)}=\p^n$ for all $n\geq 1$?
\end{question}

One could ask the same question, replacing the condition $\p^{(\dim S)}=\p^{\dim S}$ with a stronger one, namely $\p^{(i)}=\p^{i}$ for all $i\leq\dim(S)$. The equivalence between the two formulations is unknown in general, but in characteristic zero it would follow from a positive answer to the following:

\begin{question}
 Let $S$ be a regular local ring containing $\bb{C}$, and let $\p\in\Spec(S)$. Does it follow that there exists a non-zerodivisor of degree $1$ in the associated graded ring $\gr_{\p}(S)$?
\end{question}

There are two test cases where much is known about Question~\ref{que:symbuniform}:
\begin{enumerate}
 \item[I.] Points in $\bb{P}^2$.

 \item[II.] Square-free monomial ideals.
\end{enumerate}

\ul{Case I: points in $\bb{P}_k^2$.} Consider a set $X$ of $r$ points in $\bb{P}_k^2$, and let $I=I_X$ be its defining ideal. Since $\codim(I)=2$, it follows from \cites{ein-laz-smi,hochster-huneke} that $I^{(2n)}\subset I^n$ for all $n$, so in particular $I^{(4)}\subset I^2$. It is then natural to ask

\begin{question}[{\cite[Que.~0.4]{huneke-open}}]\label{que:I3inI2}
 Is it true that $I^{(3)}\subset I^2$?
\end{question}

Bocci and Harbourne gave a positive answer to this question when $I=I_X$ is the ideal of a generic set of points \cite{bocci-harbourne}. In characteristic $2$, the inclusion follows from the techniques of \cite{hochster-huneke} (see \cite[Example~8.4.4]{primer-seshadri}). Harbourne formulated some general conjectures for arbitrary homogeneous ideals which would imply a positive answer to Question~\ref{que:I3inI2} (see \cite[Conj.~8.4.2,\ Conj.~8.4.3]{primer-seshadri} or \cite[Conj.~4.1.1]{harbourne-huneke}). Unfortunately, it turns out that the relation between symbolic powers and ordinary powers is much more subtle, even in the case of points in $\bb{P}^2$: Question~\ref{que:I3inI2} was recently given a negative answer in \cite{dum-sze-gas}.

One measure of how close the ordinary powers are to symbolic powers is given by comparing the least degrees of their generators. Given a homogeneous ideal $J$, we write $\a(J)$ for the smallest degree of a minimal generator of $J$. Since $I^n\subset I^{(n)}$, $\a(I^{(n)})\leq \a(I^n)=n\cdot\a(I)$, or equivalently $\a(I)\geq\a(I^{(n)})/n$. The sequence $\a(I^{(n)})/n$ is always convergent (see Exercise~\ref{exer:limalphaI}), and it has made a surprising appearance in the construction of Nagata's counter-example to Hilbert's 14th problem.

\begin{theorem}[{\cite{nagata-14th}}]
 If $X$ is a set of $r$ generic points in $\bb{P}^2_{\bb{C}}$ and $I=I_X$ is its defining ideal, then
\begin{enumerate}
 \item $\ds\lim_{n\to\infty}\ds\frac{\a(I^{(n)})}{n}\leq\sqrt{r}$.
 \item If $r=s^2$ is a perfect square with $s\geq 4$, then $\ds\frac{\a(I^{(n)})}{n}>s$.
\end{enumerate}
If we take $r=s^2$ with $s\geq 4$, then the ring $R=\bigoplus_{n\geq 0}I^{(n)}$ is a ring of invariants that is not finitely generated.
\end{theorem}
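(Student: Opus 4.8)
The plan is to prove part (1) by an elementary dimension count, to obtain part (2) from Nagata's deep non-existence theorem for plane curves through generic points, and then to deduce the failure of finite generation formally from (1) and (2). First I would record that the limit in (1) exists: since a form vanishing to order $\geq a$ at a point and one vanishing to order $\geq b$ there multiply to a form vanishing to order $\geq a+b$, we have $I^{(a)}I^{(b)}\subseteq I^{(a+b)}$, hence $\a(I^{(a+b)})\leq\a(I^{(a)})+\a(I^{(b)})$, and Fekete's lemma for subadditive sequences gives $\lim_{n}\a(I^{(n)})/n=\inf_{n}\a(I^{(n)})/n$. For (1) itself --- which uses nothing about the $r$ points being generic --- the space of forms of degree $d$ in three variables has dimension $\binom{d+2}{2}$, while vanishing to order $\geq n$ at a point imposes at most $\binom{n+1}{2}$ linear conditions; so whenever $\binom{d+2}{2}>r\binom{n+1}{2}$ there is a nonzero form of degree $d$ vanishing to order $\geq n$ at all $r$ points, i.e. $\a(I^{(n)})\leq d$. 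Taking $d=\lceil\sqrt{rn(n+1)}\,\rceil$ gives $(d+1)(d+2)>d^{2}\geq rn(n+1)$, hence $\a(I^{(n)})\leq\sqrt{r}\,(n+1)+1$; dividing by $n$ and letting $n\to\infty$ yields $\lim_{n}\a(I^{(n)})/n\leq\sqrt{r}$.

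The substance is (2), Nagata's theorem, and this is the step I expect to be by far the hardest. It is equivalent to the assertion that for $r=s^{2}$ generic points no nonzero form of degree $\leq sn$ vanishes to order $\geq n$ at all of them: this forces $\a(I^{(n)})\geq sn+1$, hence $\a(I^{(n)})/n>s$; and it suffices to rule out degree exactly $sn$, since multiplying by a power of a general linear form disposes of smaller degrees. Letting the $r$ points move over an open set $B\subseteq(\bb{P}^{2})^{r}$, the dimension of the space of degree-$sn$ forms vanishing to order $\geq n$ at the corresponding $r$-tuple is the kernel dimension of a morphism of vector bundles over $B$, hence upper semicontinuous on $B$; so if such a form existed for generic points it would exist for every specialization, and it is enough to produce a single special configuration of $s^{2}$ points (allowing infinitely near points) for which none exists. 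Nagata does this by a careful specialization together with a descending induction on the degree $d$ carried out via quadratic Cremona transformations centered at triples of the points: on the blow-up one tracks the class $D=dH-\sum_{i}m_{i}E_{i}$ and uses B\'ezout and the adjunction/genus inequality ($p_{a}\geq0$ for an irreducible curve, and $D'\cdot D''\geq0$ for distinct irreducible non-exceptional curves) to lower the degree, the hypothesis $s\geq4$ --- equivalently $r\geq16$, so that $3d<\sum_{i}m_{i}$ --- being exactly what keeps the induction from stalling. The real obstacles here are engineering the special configuration and, throughout the Cremona induction, the bookkeeping of reducible curves and of rigid components of negative self-intersection; for these I would follow Nagata's original argument.

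Finally, the non-finite-generation. Suppose $\mc{R}:=\bigoplus_{n\geq0}I^{(n)}t^{n}\subseteq S[t]$ were a finitely generated $S$-algebra. Then $\mc{R}$ is a Noetherian $\bb{N}$-graded ring whose degree-zero part $S$ is Noetherian, so by the standard fact that some Veronese subring of such a ring is generated in degree one there is an integer $N\geq1$ with $\bigoplus_{n}I^{(Nn)}t^{Nn}$ generated over $S$ by $I^{(N)}t^{N}$, i.e. $I^{(Nn)}=(I^{(N)})^{n}$ for all $n\geq0$, and hence $\a(I^{(Nn)})=n\,\a(I^{(N)})$. By (1) the limit $\lim_{n}\a(I^{(n)})/n$ is $\leq\sqrt{r}=s$, and by (2) every term of the sequence exceeds $s$, so the limit equals $s$; restricting to the subsequence $n\mapsto Nn$ then gives $s=\lim_{n}\a(I^{(Nn)})/(Nn)=\a(I^{(N)})/N$, contradicting $\a(I^{(N)})>sN$ from (2). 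Hence $\mc{R}$ is not finitely generated. That $\mc{R}$ is a ring of invariants is Nagata's original framing of the example: for $r=s^{2}$ he realizes it as the invariant ring of a suitable linear action of the vector group $\bb{G}_{a}^{r-3}$ on a polynomial ring in $2r$ variables, so the above furnishes his celebrated negative answer to Hilbert's fourteenth problem \cite{nagata-14th}.
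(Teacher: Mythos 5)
The paper does not actually prove this theorem; it is stated with only a citation to Nagata's original paper \cite{nagata-14th}, so there is no internal proof to compare against. Judged on its own merits, your proposal is correct and well organized, and it fits precisely the framework the paper sets up in its own exercises.

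Your proof of (1) is the standard one and is complete: the subadditivity $I^{(a)}I^{(b)}\subseteq I^{(a+b)}$ plus Fekete gives the existence of the limit (this is exactly Exercise~\ref{exer:limalphaI} in the paper), and the parameter count $\binom{d+2}{2}>r\binom{n+1}{2}$ followed by $d=\lceil\sqrt{rn(n+1)}\rceil$ gives $\a(I^{(n)})\leq\sqrt{r}(n+1)+1$, hence the limsup bound. You are right that (1) needs nothing about genericity. Your deduction of non--finite generation from (1) and (2) is also correct and is a clean way to present it: you invoke the standard fact that a Noetherian $\bb{N}$-graded algebra has some Veronese generated in degree one (this is exactly the content of Exercise~5 in the symbolic powers section of the paper, phrased there as $T=\bigoplus I^{(n)}$ Noetherian iff $(I^{(k)})^n=I^{(kn)}$ for some $k$ and all $n$), which would force $\a(I^{(Nn)})/Nn$ to be constant, while (1) and (2) squeeze the limit to be exactly $s$ and never attained. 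The contradiction is immediate.

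For (2) you have, quite reasonably, identified the Cremona/specialization argument as the irreducible core and deferred to Nagata. That is the honest state of affairs: no short proof is known, and the paper itself makes no attempt to prove it either. Your sketch (semicontinuity of the cokernel rank over the parameter space, hence reduction to a special configuration; degree-lowering Cremona transformations centered at triples of points; the threshold $s\geq 4$ needed to keep the degree dropping) captures the structure of Nagata's argument correctly at the level of detail you give, though the inequality $3d<\sum_i m_i$ you write is a shorthand for the more delicate bookkeeping that actually governs when the quadratic transformation strictly lowers the degree. That is acceptable for an outline that explicitly defers to the source, but be aware that filling in (2) fully is a substantial undertaking --- it is the content of an entire paper of Nagata's and the reason this example is famous. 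Your final remark correctly records that Nagata realizes $\bigoplus_{n\geq0}I^{(n)}$ as the ring of invariants of a linear action of a codimension-three vector subgroup of $\bb{G}_a^r$ on a polynomial ring in $2r$ variables, which is what makes the example a counterexample to Hilbert's fourteenth problem.
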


\ul{Case II: square-free monomial ideals.} Consider the ideal 
\[I=(xy,xz,yz)=(x,y)\cap(x,z)\cap(y,z)\]
defining a set of $3$ non-collinear points in $\bb{P}^2$. We have 
\[I^{(2)}=(x,y)^{2}\cap(x,z)^{2}\cap(y,z)^{2},\]
and it is easily checked that $xyz\in I^{(2)}\setminus I^2$.

More generally, any square-free monomial ideal $I$ can be written as an intersection $\p_1\cap\cdots\cap\p_s$ of prime ideals, where each $\p_i$ is generated by a subset of the variables. If $\codim(\p_i)=c_i$ then $x_1\cdots x_n\in\p_i^{c_i}$. Taking $c=\codim(I)=\min(c_i)$ we get that
\[x_1\cdots x_n\in\p_1^c\cap\cdots\cap\p_s^c=I^{(c)}.\]
It follows that if $I^c=I^{(c)}$, then $x_1\cdots x_n$ must be contained in $I^c$; thus $I$ contains $c$ monomials with disjoint support, i.e.
\begin{equation}\label{eq:regseqmonomials}
I\textrm{ contains a regular sequence consisting of }c\textrm{ monomials.}
\end{equation}

If $I^{(n)}=I^n$ for all $n$ then (\ref{eq:regseqmonomials}) holds for all ideals $J$ obtained from $I$ by setting variables equal to $0$ or $1$ (such an ideal $J$ is called a \defi{minor} of $I$). This raises the following question.

\begin{question}[Gitler-Valencia-Villarreal {\cite{GVV}}]\label{que:GVV}
 If (\ref{eq:regseqmonomials}) holds for all minors of $I$, does it follow that $I^{(n)}=I^n$ for all $n$?
\end{question}
This question is equivalent to a Max-Flow-Min-Cut conjecture due to Conforti and Cornu{\'e}jols \cite[Conj.~1.6]{corn}, and it is open except in the case when $I$ is generated by quadrics. Note that to any square-free monomial ideal $I$ generated by quadrics one can associate a graph $G$ as follows: the vertices of $G$ correspond to the variables in the ring, and two vertices $x_i$ and $x_j$ are joined by an edge if $x_ix_j\in I$. Conversely, starting with a graph $G$ one can reverse the preceding construction to get a monomial ideal $I$ generated by quadrics. $I$ is called the \defi{edge ideal} of the graph $G$. With this terminology, we have the following:

\begin{theorem}[{\cite{GVV}}]\label{thm:edgeideal}
 If $I$ is the edge ideal of a graph $G$, then the following are equivalent:
\begin{enumerate}
 \item $I^{(n)}=I^n$ for all $n\geq 1$.
 \item $I$ is \defi{packed}, i.e. (\ref{eq:regseqmonomials}) holds after setting any subset of the variables to be equal to $0$ or $1$.
 \item $G$ is bipartite.
\end{enumerate}
\end{theorem}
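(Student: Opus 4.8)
The plan is to prove the cyclic chain of implications $(1)\Rightarrow(2)\Rightarrow(3)\Rightarrow(1)$. I would set things up with the standard dictionary for edge ideals: the minimal primes of $I=I(G)\subseteq S=k[x_1,\dots,x_n]$ are the monomial primes $P_C=(x_i:i\in C)$ as $C$ runs over the minimal vertex covers of $G$, so by (\ref{eq:symbpowers}) a monomial $x^a=\prod_i x_i^{a_i}$ lies in $I^{(m)}$ precisely when $\sum_{i\in C}a_i\ge m$ for every vertex cover $C$ of $G$; and a regular sequence of monomials contained in $I$ is exactly a matching of $G$. With this, $(1)\Rightarrow(2)$ is nothing more than the remark made just before Question~\ref{que:GVV}: if $I^{(m)}=I^m$ for all $m$, then (\ref{eq:regseqmonomials}) persists under passage to minors, and that persistence is the definition of packedness.

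For $(2)\Rightarrow(3)$ I would argue contrapositively. If $G$ is not bipartite it contains an odd cycle; choose one of minimal length $2k+1$. Minimality forces it to be chordless, hence to occur as an induced subgraph, so setting $x_v=0$ for every vertex $v$ lying outside this cycle is a chain of minor operations that carries $I$ to the ideal $J=I(C_{2k+1})$ in the polynomial ring on the cycle's variables. One then checks that $\codim(J)$ equals the vertex cover number $k+1$ of the odd cycle, whereas any family of monomials of $J$ with pairwise disjoint supports induces a matching of $C_{2k+1}$ and so has at most $k$ members; thus $J$ violates (\ref{eq:regseqmonomials}) and $I$ is not packed. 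The only mildly delicate point is the elementary graph-theoretic fact that a shortest odd cycle has no chord.

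The substance of the theorem is $(3)\Rightarrow(1)$. Assume $G$ bipartite. Since $I^m\subseteq I^{(m)}$ always and $I^{(m)}$ is a monomial ideal, it suffices to fix a monomial $x^a\in I^{(m)}$ and produce a multiset of $m$ edges $e_1,\dots,e_m$ of $G$ with $\sum_j \mathbf{1}_{e_j}\le a$ coordinatewise, since then $\prod_j\bigl(\prod_{v\in e_j}x_v\bigr)$ is a generator of $I^m$ dividing $x^a$. Let $M$ be the edge--vertex incidence matrix of $G$; the condition $x^a\in I^{(m)}$ then says that every integral point of the polyhedron $\{z\ge 0:Mz\ge\mathbf{1}\}$ pairs to at least $m$ against $a$. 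Here bipartiteness enters through the classical fact that $M$ is totally unimodular, so that both $\{z\ge 0:Mz\ge\mathbf{1}\}$ and $\{y\ge 0:M^{\mathsf T}y\le a\}$ have only integral vertices. Consequently
\[
m\ \le\ \min\{a^{\mathsf T}z:Mz\ge\mathbf{1},\ z\ge 0\}\ =\ \max\{\mathbf{1}^{\mathsf T}y:M^{\mathsf T}y\le a,\ y\ge 0\}
\]
by linear programming duality, and the maximum is attained at an integral $y$; reading $y$ off as a multiset of at least $m$ edges (and discarding down to $m$) gives what we want, so $x^a\in I^m$.

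The main obstacle is this last implication, and within it the only genuinely non-formal ingredient is the total unimodularity of the incidence matrix of a bipartite graph, which is exactly what makes the two polyhedra integral and lets the duality sandwich close; the rest is bookkeeping between monomials, vertex covers, and matchings. (An alternative route to $(3)\Rightarrow(1)$ passes through the normality of the Rees algebra of a bipartite edge ideal; and for $(1)\Rightarrow(2)$, if one does not wish to invoke the preceding remark as given, one should verify directly from (\ref{eq:symbpowers}) that the equalities $I^{(m)}=I^m$ descend to minors --- the ``set to $1$'' case being a localization and the ``set to $0$'' case a compatible quotient.)
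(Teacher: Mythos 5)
The paper does not prove Theorem~\ref{thm:edgeideal}: the result is stated with a citation to \cite{GVV}, and the only supporting argument in the text is the discussion preceding Question~\ref{que:GVV}, which shows (for any square-free monomial ideal) that $I^{(c)}=I^c$ with $c=\codim(I)$ forces $I$ to contain a regular sequence of $c$ monomials, and then asserts without proof that the equality of all powers descends to minors. So your proposal has to be judged on its own terms, and on those terms it is correct and is essentially the standard combinatorial-optimization proof. The shortest-odd-cycle argument for $(2)\Rightarrow(3)$ is right, including the elementary point that a shortest odd cycle is chordless, together with the count that $C_{2k+1}$ has vertex cover number $k+1$ but matching number $k$. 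For $(3)\Rightarrow(1)$ you identify exactly where the mathematics lives: total unimodularity of the bipartite incidence matrix makes both polyhedra integral, so linear programming duality converts the integer condition $x^a\in I^{(m)}$ (every vertex cover pairs to at least $m$ against $a$) into an integral edge multiset witnessing $x^a\in I^m$, and the boundedness and feasibility needed to invoke strong duality are immediate.

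The one place I would press you is $(1)\Rightarrow(2)$. You are leaning on the paper's unproved remark, equivalently on the claim that $I^{(m)}=I^m$ passes to minors. The ``set $x_i=1$'' step is a localization, and symbolic powers of a radical ideal do commute with localization; but the ``set $x_i=0$'' step is a quotient, and it is not a priori formal that symbolic powers are preserved under it. For a square-free monomial ideal it does hold: one checks that the minimal primes of the minor are exactly the images of the relevant minimal primes of $I$, and then that the description~(\ref{eq:symbpowers}) and the monomial generation of $I^m$ both restrict correctly to monomials not involving $x_i$. This is a short monomial computation, but it is a computation, and ``compatible quotient'' glosses over it; spell that step out and your chain of implications is complete.
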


\subsection*{Exercises:} \ 

\bigskip

\begin{enumerate}

\item A famous theorem of Rees says that if $R$ is a Noetherian local ring which is
formally equidimensional (i.e., its completion is equidimensional), and $I$ is primary to
the maximal ideal $\m$, then $f\in\ol{I}$ if and only if $e(I)=e(I+(f))$. Prove the easy
direction of this theorem.

\item Let $S=k[[x_1,\cdots,x_n]]$ and let $f_i = x_1^{a_{i1}}+\cdots+x_n^{a_{in}}$. Assume that the ideal $I$ generated by
the $f_i$'s has the property that $S/I$ is a finite dimensional vector space. Give a formula, in terms of the exponents $a_{ij}$, 
for the dimension of this vector space (which is also the length of $S/I$, or the multiplicity of the ideal $I$).

\item\label{exer:ciprime} Let $\p$ be a prime ideal generated by a regular sequence in a regular local 
ring (or polynomial ring). Prove that $\p^{(n)} = \p^n$ for all $n\geq 1$.

\item Prove that if $I$ is a reduced ideal in a polynomial ring, then $I^{(n)}\cdot I^{(m)}\subset I^{(n+m)}$.

\item With the notation from the previous exercise, prove that the graded algebra $$T: = \bigoplus_{n\geq 0} I^{(n)}$$ is Noetherian if and only if
there exists an integer $k$ such that for all $n$, $$(I^{(k)})^n = I^{(kn)}.$$

\item\label{exer:multisord} Let $S$ be a regular local ring and let $f\in S$ be a nonzero, nonunit element in $S$.
Prove that the multiplicity of $R: = S/(f)$ is equal to the order of $f$.

\item Prove the following result of Chudnovsky \cite{Ch}, which was  proved by him using transcendental
methods: if $I$ is the ideal of a set of points in the projective plane over the complex numbers,
then $$ \alpha(I^{(N)})\geq \frac{N\alpha(I)}{2},$$
where (as before) $\alpha(\quad)$ denotes the least degree of a minimal generator of a homogeneous ideal.

\item Let $I$ be the ideal of at most five points in the projective plane. Prove that
$I^{(3)}\subset I^2$.

\item Let $I$ be an ideal of  points in the projective plane over a field of
characteristic $2$. Prove that
$I^{(3)}\subset I^2$.

\item\label{exer:EisenbudMazur} The Eisenbud-Mazur conjecture states that if $S$ is a power series ring over a
field of characteristic $0$, then for every prime ideal $\p$, $$\p^{(2)}\subset \m \p,$$ where $\m$ denotes the maximal ideal of $S$. 
Prove this when $\p$ is homogeneous.

\item Prove the Eisenbud-Mazur conjecture assuming that for every  $f\in S$ ($S$ as in Exercise~\ref{exer:EisenbudMazur} and
$f$ not a unit) $f$ is not a minimal generator of the integral closure of its partial derivatives.

\item\label{exer:symbvsregpower} Let $R$ be a regular local ring, and let $\p$ be a prime ideal. Set $G = \gr_{\p}(R)$,
the associated graded ring of $\p$. If $f\in R$, write $f^*$ for the leading form of $f$ in $G$.
Show that if $f^*$ is nilpotent in $G$, then $f\in \p^{(n)}$ but $f\notin \p^n$ for some $n$.

\item\label{exer:limalphaI} Let $I$ be a homogeneous ideal in a polynomial ring $S$, satisfying $I=\sqrt{I}$. Prove that
the limit of $\ds\frac{\alpha(I^{(m)})}{m}$ exists as $m$ goes to infinity.
\end{enumerate}

\newpage

\begin{bibdiv}
\begin{biblist}

\bib{Ab}{article}{
author={Aberbach, Ian},
TITLE={Test elements in excellent rings with an application to the uniform Artin-Rees property},
journal={Proc. Amer. Math. Soc.},
YEAR={1993},
volume={118},
number={2},
pages={355--363}
}
\bib{AH}{article}{
    author={Ananyan, Tigran},
    author={Hochster, Melvin},
   TITLE = {Ideals Generated by Quadratic Polynomials},
      YEAR = {2011},
       journal = {arXiv},
       number = {1106.0839}
       }

\bib{art-rot}{article}{
   author={Artin, Michael},
   author={Rotthaus, Christel},
   title={A structure theorem for power series rings},
   conference={
      title={Algebraic geometry and commutative algebra, Vol.\ I},
   },
   book={
      publisher={Kinokuniya},
      place={Tokyo},
   },
   date={1988},
   pages={35--44},
   review={\MR{977751 (90b:14006)}},
}

\bib{primer-seshadri}{article}{
   author={Bauer, Thomas},
   author={Di Rocco, Sandra},
   author={Harbourne, Brian},
   author={Kapustka, Micha{\l}},
   author={Knutsen, Andreas},
   author={Syzdek, Wioletta},
   author={Szemberg, Tomasz},
   title={A primer on Seshadri constants},
   conference={
      title={Interactions of classical and numerical algebraic geometry},
   },
   book={
      series={Contemp. Math.},
      volume={496},
      publisher={Amer. Math. Soc.},
      place={Providence, RI},
   },
   date={2009},
   pages={33--70},
   review={\MR{2555949 (2010k:14010)}},
   doi={10.1090/conm/496/09718},
}

\bib{bocci-harbourne}{article}{
   author={Bocci, Cristiano},
   author={Harbourne, Brian},
   title={Comparing powers and symbolic powers of ideals},
   journal={J. Algebraic Geom.},
   volume={19},
   date={2010},
   number={3},
   pages={399--417},
   issn={1056-3911},
   review={\MR{2629595 (2011d:13021)}},
   doi={10.1090/S1056-3911-09-00530-X},
}

\bib{boocher}{article}{
   author={Boocher, Adam},
     TITLE = {The Module of Differentials, Evolutions, and the Eisenbud-Mazur Conjecture},
      YEAR = {2008},
       eprint = {www.math.berkeley.edu/~aboocher/seniorthesis.pdf},
}

\bib{bri-sko}{article}{
   author={Brian{\c{c}}on, Jo{\"e}l},
   author={Skoda, Henri},
   title={Sur la cl\^oture int\'egrale d'un id\'eal de germes de fonctions
   holomorphes en un point de ${\bf C}^{n}$},
   language={French},
   journal={C. R. Acad. Sci. Paris S\'er. A},
   volume={278},
   date={1974},
   pages={949--951},
   review={\MR{0340642 (49 \#5394)}},
}

\bib{Ch}{article}{
author={ Chudnovsky, G.V.} 
title={Singular points on complex 
hypersurfaces and multidimensional
Schwarz Lemma}, 
journal={S\'eminaire de Th\'eorie des Nombres, Paris 1979--80,
S\'eminaire Delange-Pisot-Poitou, Progress in Math}
volume={12}, 
date={1981},
editor={M-J Bertin}, 
publisher={Birkh\"auser, Boston-Basel-Stutgart} 
}

\bib{corn}{book}{
   author={Cornu{\'e}jols, G{\'e}rard},
   title={Combinatorial optimization},
   series={CBMS-NSF Regional Conference Series in Applied Mathematics},
   volume={74},
   note={Packing and covering},
   publisher={Society for Industrial and Applied Mathematics (SIAM)},
   place={Philadelphia, PA},
   date={2001},
   pages={xii+132},
   isbn={0-89871-481-8},
   review={\MR{1828452 (2002e:90004)}},
   doi={10.1137/1.9780898717105},
}

\bib{dum-sze-gas}{article}{
   author={Dumnicki, Marcin},
   author={Szemberg, Tomasz},
   author={Tutaj-Gasinska, Halszka},
     TITLE = {A counter-example to a question by Huneke and Harbourne},
      YEAR = {2013},
       journal = {arXiv},
       number = {1301.7440},
}

\bib{DO}{article}{
  author={Duncan, A. J.},
  author={O'Carroll, Liam},
   TITLE = {A full uniform Artin-Rees theorem},
     YEAR={1989},
     journal={J. Reine Angew. Math.},
     volume={394},
     pages={203-207}
     }

\bib{ein-laz-smi}{article}{
   author={Ein, Lawrence},
   author={Lazarsfeld, Robert},
   author={Smith, Karen E.},
   title={Uniform bounds and symbolic powers on smooth varieties},
   journal={Invent. Math.},
   volume={144},
   date={2001},
   number={2},
   pages={241--252},
   issn={0020-9910},
   review={\MR{1826369 (2002b:13001)}},
   doi={10.1007/s002220100121},
}

\bib{ELSV}{article}{
   author={Ein, Lawrence},
   author={Lazarsfeld, Robert},
   author={Smith, Karen E.},
   author={Varolin, Dror},
   title={Jumping coefficients of multiplier ideals},
   journal={Duke Math. J.},
   volume={123},
   date={2004},
   number={3},
   pages={469--506}
}

\bib{eisCA}{book}{
   author={Eisenbud, David},
   title={Commutative algebra with a view toward algebraic geometry},
   series={Graduate Texts in Mathematics},
   volume={150},
   note={},
   publisher={Springer-Verlag},
   place={New York},
   date={1995},
   pages={xvi+785},
   isbn={0-387-94268-8},
   isbn={0-387-94269-6},
   review={\MR{1322960 (97a:13001)}},
   doi={10.1007/978-1-4612-5350-1},
}

\bib{eis-syzygies}{book}{
   author={Eisenbud, David},
   title={The geometry of syzygies: A second course in commutative algebra and algebraic geometry},
   series={Graduate Texts in Mathematics},
   volume={229},
   publisher={Springer-Verlag},
   place={New York},
   date={2005},
   pages={xvi+243},
   isbn={0-387-22215-4},
   review={\MR{2103875 (2005h:13021)}},
}

\bib{eis-evans}{article}{
   author={Eisenbud, David},
   author={Evans, E. Graham, Jr.},
   title={Every algebraic set in $n$-space is the intersection of $n$\
   hypersurfaces},
   journal={Invent. Math.},
   volume={19},
   date={1973},
   pages={107--112},
   issn={0020-9910},
   review={\MR{0327783 (48 \#6125)}},
}

\bib{eis-hoc}{article}{
   author={Eisenbud, David},
   author={Hochster, Melvin},
   title={A Nullstellensatz with nilpotents and Zariski's main lemma on
   holomorphic functions},
   journal={J. Algebra},
   volume={58},
   date={1979},
   number={1},
   pages={157--161},
   issn={0021-8693},
   review={\MR{535850 (80g:14002)}},
   doi={10.1016/0021-8693(79)90196-0},
}

\bib{eis-maz}{article}{
   author={Eisenbud, David},
   author={Mazur, Barry},
   title={Evolutions, symbolic squares, and Fitting ideals},
   journal={J. Reine Angew. Math.},
   volume={488},
   date={1997},
   pages={189--201},
   issn={0075-4102},
   review={\MR{1465370 (98h:13035)}},
}

\bib{GP}{article}{
author={Giral, Jos\'e M.},
author={Planas-Vilanova, Francesc},
   title={Integral degree of a ring, reduction numbers and uniform Artin-Rees numbers},
   journal={J. Algebra},
   volume={319},
   number={8},
   date={2008},
   pages={3398--3418}
   }

\bib{GVV}{article}{
   author={Gitler, Isidoro},
   author={Valencia, Carlos E.},
   author={Villarreal, Rafael H.},
   title={A note on Rees algebras and the MFMC property},
   journal={Beitr\"age Algebra Geom.},
   volume={48},
   date={2007},
   number={1},
   pages={141--150},
   issn={0138-4821},
   review={\MR{2326406 (2008f:13004)}},
}

\bib{harbourne-huneke}{article}{
   author={Harbourne, Brian},
   author={Huneke, Craig},
     TITLE = {Are symbolic powers highly evolved?},
      YEAR = {2011},
       journal = {arXiv},
       number = {1103.5809},
}

\bib{HH1}{article}{
   author={Hochster, Melvin},
   author={Huneke, Craig},
   title={Tight closure, invariant theory, and the Brian\c con-Skoda theorem},
   journal={J. Amer. Math. Soc.},
   volume={3},
   date={1990},
   number={1},
   pages={31--116}
   }

\bib{hochster-huneke}{article}{
   author={Hochster, Melvin},
   author={Huneke, Craig},
   title={Comparison of symbolic and ordinary powers of ideals},
   journal={Invent. Math.},
   volume={147},
   date={2002},
   number={2},
   pages={349--369},
   issn={0020-9910},
   review={\MR{1881923 (2002m:13002)}},
   doi={10.1007/s002220100176},
}

\bib{hubl}{article}{
   author={H{\"u}bl, Reinhold},
   title={Evolutions and valuations associated to an ideal},
   journal={J. Reine Angew. Math.},
   volume={517},
   date={1999},
   pages={81--101},
   issn={0075-4102},
   review={\MR{1728546 (2000j:13047)}},
   doi={10.1515/crll.1999.099},
}

\bib{Hu}{article}{
   author={Huneke, Craig},
   TITLE={Uniform bounds in Noetherian rings},
   YEAR={1992},
   journal={Inventiones Math.},
   volume={107},
   pages={203--223}
   }
   
\bib{huneke-open}{article}{
   author={Huneke, Craig},
     TITLE = {Open problems on powers of ideals},
      YEAR = {2006},
      journal={Notes from an AIM workshop on Integral Closure, Multiplier Ideals, and Cores},
      eprint={www.aimath.org/WWN/integralclosure/Huneke.pdf}
}

\bib{HMMS}{article}{
author={Huneke, Craig},
author={Mantero, Paolo},
author={McCullough, Jason},
author={Seceleanu, Alexandra},
TITLE={The projective dimension of codimension two algebras presented by quadrics},
YEAR={2013},
journal={arXiv},
number={1304.0745}
}

\bib{hun-swa}{book}{
   author={Huneke, Craig},
   author={Swanson, Irena},
   title={Integral closure of ideals, rings, and modules},
   series={London Mathematical Society Lecture Note Series},
   volume={336},
   publisher={Cambridge University Press},
   place={Cambridge},
   date={2006},
   pages={xiv+431},
   isbn={978-0-521-68860-4},
   isbn={0-521-68860-4},
   review={\MR{2266432 (2008m:13013)}},
}

\bib{Ko}{article}{
author={Koll\'ar, J\'anos},
title={Sharp effective Nullstellensatz},
journal={J. Amer. Math. Soc.},
volume={1},
date={1988},
pages={963--975}
}
        
\bib{kunz}{article}{
   author={Kunz, Ernst},
   title={Characterizations of regular local rings for characteristic $p$},
   journal={Amer. J. Math.},
   volume={91},
   date={1969},
   pages={772--784},
   issn={0002-9327},
   review={\MR{0252389 (40 \#5609)}},
}

\bib{lech}{article}{
   author={Lech, Christer},
   title={Note on multiplicities of ideals},
   journal={Ark. Mat.},
   volume={4},
   date={1960},
   pages={63--86 (1960)},
   issn={0004-2080},
   review={\MR{0140536 (25 \#3955)}},
}

\bib{lip-sat}{article}{
   author={Lipman, Joseph},
   author={Sathaye, Avinash},
   title={Jacobian ideals and a theorem of Brian\c con-Skoda},
   journal={Michigan Math. J.},
   volume={28},
   date={1981},
   number={2},
   pages={199--222},
   issn={0026-2285},
   review={\MR{616270 (83m:13001)}},
}

\bib{matsumuraCA}{book}{
   author={Matsumura, Hideyuki},
   title={Commutative algebra},
   series={Mathematics Lecture Note Series},
   volume={56},
   edition={2},
   publisher={Benjamin/Cummings Publishing Co., Inc., Reading, Mass.},
   date={1980},
   pages={xv+313},
   isbn={0-8053-7026-9},
   review={\MR{575344 (82i:13003)}},
}

\bib{moh1}{article}{
   author={Moh, T. T.},
   title={On the unboundedness of generators of prime ideals in power series
   rings of three variables},
   journal={J. Math. Soc. Japan},
   volume={26},
   date={1974},
   pages={722--734},
   issn={0025-5645},
   review={\MR{0354680 (50 \#7158)}},
}

\bib{moh2}{article}{
   author={Moh, T. T.},
   title={On generators of ideals},
   journal={Proc. Amer. Math. Soc.},
   volume={77},
   date={1979},
   number={3},
   pages={309--312},
   issn={0002-9939},
   review={\MR{545586 (80g:14029)}},
   doi={10.2307/2042176},
}

\bib{mukai}{article}{
   author={Mukai, Shigeru},
   title={Curves and Grassmannians},
   conference={
      title={Algebraic geometry and related topics},
      address={Inchon},
      date={1992},
   },
   book={
      series={Conf. Proc. Lecture Notes Algebraic Geom., I},
      publisher={Int. Press, Cambridge, MA},
   },
   date={1993},
   pages={19--40},
   review={\MR{1285374 (95i:14032)}},
}

\bib{nagata-14th}{article}{
   author={Nagata, Masayoshi},
   title={On the fourteenth problem of Hilbert},
   conference={
      title={Proc. Internat. Congress Math. 1958},
   },
   book={
      publisher={Cambridge Univ. Press},
      place={New York},
   },
   date={1960},
   pages={459--462},
   review={\MR{0116056 (22 \#6851)}},
}

\bib{nagata-lrings}{book}{
   author={Nagata, Masayoshi},
   title={Local rings},
   series={Interscience Tracts in Pure and Applied Mathematics, No. 13},
   publisher={Interscience Publishers, a division of John Wiley \& Sons, New
   York-London},
   date={1962},
   pages={xiii+234},
   review={\MR{0155856 (27 \#5790)}},
}

\bib{O}{article}{
  author={O'Carroll, Liam},
   TITLE = {A uniform Artin-Rees theorem and Zariski's main lemma on holomorphic functions},
     YEAR={1987},
     journal={Inventiones Math.},
     volume={90},
     number={3},
     pages={647--652}
     }
     
 \bib{OP}{article}{
   author={O'Carroll, Liam},
   author={Planas-Vilanova, Francesc},
   title={Irreducible affine space curves and the uniform Artin-Rees property on the prime spectrum},
   journal={J. Algebra},
   volume={320},
   date={2008},
   number={8},
   pages={3339--3344}
   }

\bib{peeva}{book}{
   author={Peeva, Irena},
   title={Graded syzygies},
   series={Algebra and Applications},
   volume={14},
   publisher={Springer-Verlag London Ltd.},
   place={London},
   date={2011},
   pages={xii+302},
   isbn={978-0-85729-176-9},
   review={\MR{2560561 (2011j:13015)}},
   doi={10.1007/978-0-85729-177-6},
}

\bib{ps-open}{article}{
   author={Peeva, Irena},
   author={Stillman, Mike},
   title={Open problems on syzygies and Hilbert functions},
   journal={J. Commut. Algebra},
   volume={1},
   date={2009},
   number={1},
   pages={159--195},
   issn={1939-0807},
   review={\MR{2462384 (2009i:13024)}},
   doi={10.1216/JCA-2009-1-1-159},
}      

\bib{P1}{article}{
author={Planas-Vilanova, Francesc},
   title={The strong uniform Artin-Rees property in codimension one},
   journal={J. Reine Angew. Math.},
   volume={527},
   date={2000},
   pages={185--201}
   }

\bib{saito}{article}{
   author={Saito, Kyoji},
   title={Quasihomogene isolierte Singularit\"aten von Hyperfl\"achen},
   language={German},
   journal={Invent. Math.},
   volume={14},
   date={1971},
   pages={123--142},
   issn={0020-9910},
   review={\MR{0294699 (45 \#3767)}},
}

\bib{sch}{article}{
   author={Schoutens, Hans},
   title={Uniform bounds in algebraic geometry and commutative algebra},
   journal={Connections between model theory and algebraic and analytic geometry, Quad. Mat., Dept. Math., Seconda Univ. Napoli, Caserta,},
   volume={6},
   date={2000},
   pages={43--93},
  }

\bib{ST}{article}{
author={Srinivas, V.},
author={Trivedi, Vijaylaxmi},
TITLE={On the Hilbert function of a Cohen-Macaulay local ring},
journal={J. Algebraic Geometry},
volume={6},
number={4},
year={1997},
pages={733--751},
review={\MR{1487234}}
}

\bib{storch}{article}{
   author={Storch, Uwe},
   title={Bemerkung zu einem Satz von M. Kneser},
   language={German},
   journal={Arch. Math. (Basel)},
   volume={23},
   date={1972},
   pages={403--404},
   issn={0003-889X},
   review={\MR{0321921 (48 \#286)}},
}

\bib{S}{article}{
author={Striuli, Janet},
   title={A uniform Artin-Rees property for syzygies in rings of dimension one and two},
   journal={J. Pure Appl. Algebra},
   volume={210},
   date={2007},
   pages={577--588}
   }

\bib{T}{article}{
author={Trivedi, Vijaylaxmi},
title={Hilbert functions, Castelnuovo-Mumford regularity and uniform Artin-Rees numbers},
journal={Manuscripta Math.},
volume={94},
number={4},
date={1997},
pages={485�499},
review={\MR{1484640}}
}

\bib{W1}{article}{
   author={Wang, Hsin-Ju},
   title={On the Fitting ideals in free resolutions},
   journal={Michigan Math. J.},
   volume={41},
   date={1994},
   pages={587-608},
   review={\MR{1297711}},
}

\bib{W2}{article}{
   author={Wang, Hsin-Ju},
   title={A uniform property of affine domains},
   journal={J. Algebra},
   volume={215},
   number={2},
   date={1999},
   pages={500--508},
   review={\MR{1686203}},
}

\bib{zariski}{article}{
   author={Zariski, Oscar},
   title={A fundamental lemma from the theory of holomorphic functions on an
   algebraic variety},
   journal={Ann. Mat. Pura Appl. (4)},
   volume={29},
   date={1949},
   pages={187--198},
   issn={0003-4622},
   review={\MR{0041488 (12,854a)}},
}

\end{biblist}
\end{bibdiv}

\end{document}